\documentclass[leqno, 12pt]{amsart}
\usepackage{cases}
\usepackage{amsmath}
\usepackage{amsthm}
\usepackage{graphicx,subfigure}
\usepackage{mathrsfs}
\usepackage{bbm}
\usepackage{bm}
\usepackage{amsfonts}
\usepackage{amssymb}
\usepackage{color}
\usepackage{multirow}
\usepackage{enumerate}
\usepackage{epstopdf}
\usepackage{cite,stmaryrd,txfonts}
\usepackage{tikz}
\usepackage{makecell}
\usepackage{booktabs}
\usepackage{hyperref}
\allowdisplaybreaks[4]

\def\opn#1#2{\def#1{\operatorname{#2}}}
\opn\chara{char} \opn\length{\ell}
\opn\projdim{proj\,dim} \opn\injdim{inj\,dim} \opn\rank{rank}
\opn\depth{depth} \opn\grade{grade} \opn\height{height}
\opn\embdim{emb\,dim} \opn\codim{codim}

\opn\Tr{Tr} \opn\bigrank{big\,rank}
\opn\superheight{superheight}\opn\lcm{lcm}
\opn\trdeg{tr\,deg}
\opn\reg{reg} \opn\lreg{lreg}

\opn\Ker{Ker} \opn\Coker{Coker} \opn\Im{Im} \opn\Hom{Hom}
\opn\Tor{Tor} \opn\Ext{Ext} \opn\End{End} \opn\Aut{Aut} \opn\id{id}

\opn\nat{nat}
\opn\pff{pf}
\opn\Pf{Pf} \opn\GL{GL} \opn\SL{SL} \opn\mod{mod} \opn\ord{ord}

\let\to=\rightarrow

\def\Implies{\ifmmode\Longrightarrow \else
     \unskip${}\Longrightarrow{}$\ignorespaces\fi}
\def\implies{\ifmmode\Rightarrow \else
     \unskip${}\Rightarrow{}$\ignorespaces\fi}
\def\iff{\ifmmode\Longleftrightarrow \else
     \unskip${}\Longleftrightarrow{}$\ignorespaces\fi}
\let\gets=\leftarrow

\let\:=\colon

\newtheorem{Theorem}{Theorem}[section]
\newtheorem{Lemma}[Theorem]{Lemma}

\newtheorem{Remark}[Theorem]{Remark}

\newtheorem{Example}[Theorem]{Example}

\newtheorem{Algorithm}[Theorem]{Algorithm}

\newtheorem{Assumption}[Theorem]{Assumption}
\theoremstyle{definition}

\providecommand*{\Dist}[2]{\operatorname{dist}({#1};{#2})}   
\providecommand*{\Dist}[2]{\Dist{#1}{#2}}

\providecommand{\rank}{\operatorname{rank}}                        

\renewcommand{\Im}{\operatorname{Im}}             

\newcommand{\Bc}{{\boldsymbol{c}}}
\newcommand{\Bd}{{\boldsymbol{d}}}
\newcommand{\Be}{{\boldsymbol{e}}}

\newcommand{\Bn}{{\boldsymbol{n}}}

\newcommand{\Bp}{{\boldsymbol{p}}}

\newcommand{\Bv}{{\boldsymbol{v}}}
\newcommand{\Bw}{{\boldsymbol{w}}}
\newcommand{\Bx}{{\boldsymbol{x}}}
\newcommand{\By}{{\boldsymbol{y}}}

\newcommand{\BH}{{\boldsymbol{H}}}

\newcommand{\BL}{{\boldsymbol{L}}}

\newcommand{\BT}{{\boldsymbol{T}}}

\newcommand{\BV}{{\boldsymbol{V}}}
\newcommand{\BW}{{\boldsymbol{W}}}

\newcommand{\BZ}{{\boldsymbol{Z}}}

\newcommand{\phibf}{\boldsymbol{\phi}}

\newcommand{\chibf}{\boldsymbol{\chi}}

\newcommand{\Ce}{\mathcal{E}}

\newcommand{\Cp}{\mathcal{P}}

\newcommand{\Ct}{\mathcal{T}}

\newcommand{\bbA}{\mathbb{A}}

\newcommand{\bbD}{\mathbb{D}}

\newcommand{\bbI}{\mathbb{I}}
\newcommand{\bbJ}{\mathbb{J}}

\newcommand*{\N}[1]{\left\|{#1}\right\|}     
\newcommand*{\SN}[1]{\left|{#1}\right|}      

\newcommand*{\Lp}[2][\defaultdomain]{L^{#2}({#1})}

\newcommand*{\Ltwo}[1][\defaultdomain]{\Lp[#1]{2}}

\newcommand*{\Hm}[2][\defaultdomain]{H^{#2}({#1})}

\newcommand*{\Hone}[1][\defaultdomain]{\Hm[#1]{1}}

\newcommand{\D}{\mathrm{d}}

\newcommand{\ol}{\overline}

\newcommand{\be}{\begin{eqnarray}}
\newcommand{\ee}{\end{eqnarray}}

\newcommand{\ben}{\begin{eqnarray*}}
\newcommand{\een}{\end{eqnarray*}}

\providecommand*{\diag}[1]{\operatorname{diag}\left({#1}\right)}

\opn\ini{in} \opn\inm{inm} \opn\Sym{Sym} \opn\diag{diag}
\opn\Ii{(i)} \opn\Iii{(ii)}

\title{ An unfitted finite element method for PDE-constrained shape optimization via shape gradient flow}
 \author{Wei Gong$^\diamond$, Chuwen Ma$^\dag$ and Ziyi Zhang$^\ddag$}
 \thanks{$^\diamond$SKLMS\&NCMIS, Institute of Computational Mathematics, Academy of Mathematics and Systems Science, Chinese Academy of Sciences, Beijing 100190, China. Email: wgong@lsec.cc.ac.cn. This author was supported by the Strategic Priority Research Program of the Chinese Academy of Sciences XDB0640000 \& XDB0640200, the National Key Research and Development Program of China (2022YFA1004402), and the NSFC under grant no. 12494543 and 12471393.\\
  $^\dag$School of Mathematical Sciences, Key Laboratory of Mathematics and Engineering Applications Ministry of Education,\& Shanghai Key Laboratory of Pure Mathematics and Mathematical Practice, East China Normal University, Shanghai 200241, China.
  Email: cwma@math.ecnu.edu.cn. This author was partially
 supported by NSFC grant No. 12501607, the Science and Technology Commission of
 Shanghai Municipality (No. 22DZ2229014).\\
  $^\ddag$School of Mathematical Sciences, University of Chinese Academy of Sciences, China. Email: zhangziyi21@mails.ucas.ac.cn.}

\begin{document}
\maketitle

{\bf Abstract:}\hspace*{10pt} {In this paper, we propose an unfitted finite element method to solve PDE-constrained shape optimization problems via shape gradient flow. The shape gradient flow system consists of the state equation, the adjoint equation, the velocity equation, as well as the flow map that generates the evolution of the boundary driven by the velocity field, which can be viewed as a limit system of the classical shape gradient descent algorithm. In \cite{GongLiRao} the authors proposed an evolving finite element method to solve the shape gradient flow system. Instead, in this paper, we propose an unfitted finite element method in which the evolution of the boundary is realized by cubic splines and the equations are solved by cut finite element methods with ghost penalization. Under reasonable assumptions, we are able to prove some optimal convergence rates that are further validated by numerical experiments.}

{{\bf Keywords:}\hspace*{10pt} Shape optimization, shape gradient flow, unfitted finite element method, convergence}

\section{Introduction}
\setcounter{equation}{0}
Shape optimization problems have been extensively studied in recent decades due to their wide applications in various fields, including structural mechanics, fluid mechanics, electromagnetics, and biology (cf. \cite{AllaireDapognyJouve}). These models typically look for an optimal domain $\Omega$ by minimizing an objective functional:
\begin{align*}
\min\limits_{\Gamma:=\partial\Omega} \, J(\Gamma)=\int_\Omega j(\Bx,u){\rm d} x,
\end{align*}
where $u$ is the solution to a partial differential equation (PDE) defined in the domain $\Omega$. The study for shape optimization problems ranges from the existence of a solution, the shape sensitivity analysis and the numerical algorithms, we refer to \cite{AllaireDapognyJouve,SokolowskiZolesio1992} for more details.

Shape optimization algorithms involve the evolution of the domain as well as the discretization of the underlined PDEs. For tracking or evolution of the domain, we may resort to the explicit or implicit method such as the moving mesh method, the parametric method, or the level-set method. For the optimization algorithm, shape gradient descent algorithm is the most popular one, where shape calculus plays a pivotal role as it provides the shape gradient by rigorous mathematical theory.  

The convergence issue for shape optimization problems is an important but not fully exploited topic. There are some progresses from the perspective of both numerical PDEs and optimization algorithms. For  convergence involving the discretization of the optimization problem and the governing PDEs, Eppler et al. \cite{EpplerHarbrechtSchneider} considered the boundary parameterization of an elliptic shape optimization problem posed on star-shaped domains and derived error estimates for a finite element method (FEM) using the second-order sufficient optimality condition. In \cite{KinigerVexler} and \cite{FumagalliParoliniVerani} the authors considered  two-dimensional shape optimization problems for the elliptic and Stokes equations, where the portion of the boundary to be optimized is the graph of a function. The second-order convergence of the numerical approximations to a local solution of the optimization problem was proved under the second-order sufficient optimality condition. The analyses in these articles are based on the second-order optimality condition and the computation of the shape Hessian, and are restricted to parametrization of boundaries with special shapes. In \cite{ChenaisZuazua} an abstract convergence of the discrete optimal shape to the continuous optimal shape, measured in the Hausdorff complementary metric, was proved for an elliptic shape optimization problem in two dimensions based on the compactness argument, where the discrete shape is represented by the finite element mesh. For the convergence of the shape gradient descent algorithm, we refer to \cite{DeckelnickHerbertHinze} for the convergence analysis with $\BW^{1,\infty}$ velocity. 

In \cite{GongLiRao} the authors proposed a shape gradient flow system and derived optimal convergence rates for an evolving finite element method. The shape gradient flow consists of the state equation, the adjoint state equation, and the flow map, where the flow map is driven by a velocity field satisfying the following equation with the so-called Hilbertian regularization approach
\begin{equation*}
   \mathrm{Find} \ \Bw \in \BH: \  b(\Bw, \Bv) = -\mathrm{d} J(\Gamma; \Bv) \quad \forall \Bv \in \BH.
\end{equation*}
The choice of the Hilbert space $\BH$ can significantly affect the numerical performance of the shape optimization algorithm, as discussed in \cite{AllaireDapognyJouve}. The most common choice is $\BH:=H^1(\Omega)^d$ which favors  practical implementations and numerical analysis in \cite{GongLiRao}. 

We remark that the shape optimization procedure can be viewed as a moving boundary or interface tracking problem. Conventional approaches to solving these problems can be classified into two main categories: body fitted and unfitted methods. For body fitted method, the moving mesh method \cite{DziukElliott_ESFEM,Edelmann-2021,Elliott-Ranner-2021,Elliott-Styles-2012} is the most popular one, as it offers distinct advantages in precisely tracking domain deformation while maintaining straightforward numerical implementation. However, this method may suffer from progressive mesh quality deterioration, often necessitating computationally expensive remeshing procedures. In contrast, unfitted methods eliminate the need for remeshing by employing alternative interface representation techniques, such as level-set methods \cite{osher1988fronts}, parametric approaches, or cubic spline methods \cite{Ma-Zhang-Zheng-2022} adopted in the current work. The motivation to use cubic splines to approximate the domain boundary characterized by the flow map is that we can add or delete control points during the optimization course, which is in contrast to the parameterization approach. 

The unfitted method for solving interface problems, moving boundary problems, or problems with complex boundaries is quite popular and has been developed very well in recent decades. It decouples the mesh from the physical domain, simplifies the mesh generation, and is especially well-suited for problems with moving boundaries, evolving interfaces, multiphase flows, shape optimization, and so on. Peskin introduced the immersed boundary method (IBM) to simulate flows around complex shapes in \cite{PESKIN1972252}. Then the extended FEM (XFEM) was introduced in \cite{belytschko1999elastic} for fracture modeling with enrichment functions that allow discontinuities across elements.
An improvement of a new technique for modeling cracks in the finite element framework is presented in \cite{moes1999finite}. In \cite{hansbo2002unfitted} the authors laid the groundwork for CutFEM, proposing unfitted discretizations with consistent stabilizations. In \cite{burman2022cutfem} a general framework for the construction and analysis of discrete extension operators was developed with an application to the unfitted finite element approximation of partial differential equations. CutFEM was also used to solve shape optimization problems with satisfactory numerical results, we refer to \cite{Bernland2018,Burman2017,Burman2018} for more details. 

Significant progress has been made in the convergence analysis of finite element approximations for linear parabolic PDEs with moving boundaries using fitted and unfitted elements. In \cite{Ma-Zhang-Zheng-2022} the authors proposed a fourth-order unfitted characteristic FEM to solve the advection-diffusion equation in time-varying domains with a prescribed velocity. However, numerical analyses remain limited for moving boundary problems with solution-driven velocities. In this article, we present an analysis of a moving boundary problem with solution driven velocity, arising from the PDE-constrained shape optimization problem, using the unfitted method. Specifically, we used a cubic spline to track the boundary and defined an artificial mapping between the exact and numerical domains to quantify the geometric error. Furthermore, a careful choice of test functions is introduced to handle the nonlinear terms of the problem. We believe that the numerical analysis provided in \cite{GongLiRao} and the current paper may provide an alternative viewpoint for the convergence analysis of shape optimization problems.

Throughout this paper, let $C$ denote a generic positive constant which may depend on $T$, but is independent of the time step $\tau$, the grid size $h$ and the arc segment $\eta$. Vector-valued quantities
are denoted by boldface symbols, such as $\bm{L}^2(\Omega) = (L^2(\Omega))^2$, and matrix-valued quantities are denoted by blackboard bold symbols, such as $\mathbb{L}^2(\Omega) = (L^2(\Omega))^{2\times 2}$. In the following, $(\cdot,\cdot)_{\Omega}$ denotes the inner products in $L^2(\Omega)$, $\bm{L}^2(\Omega)$, and $\mathbb{L}^2(\Omega)$, in their respective circumstances.

The remainder of this paper is organized as follows. In Section 2, we formulate the shape gradient flow model for PDE-constrained shape optimization problems. Then in Section 3 we give the temporal and spatial discretizations and introduce the boundary tracking algorithm based on the cubic spline. In Section 4, we present and prove our main theoretical result on the convergence of discrete problems solved using an unfitted finite element method. The numerical experiment is shown in Section 5. We conclude the paper in Section 6. 

\section{PDE-constrained shape optimization problems}
\setcounter{equation}{0}
In this paper, we consider the following shape optimization problem 
\begin{align}\label{functional-J}
\min\limits_{\Gamma} \, J(\Gamma)=\int_\Omega j(\cdot,u){\rm d} x
\end{align}
subject to
\begin{align}
\left\{
\begin{aligned}
-\Delta u + u=f\quad&\mbox{in}\ \Omega\subset\mathbb{R}^2,\\
\partial_{\Bn} u=0\quad&\mbox{on}\ \Gamma:=\partial\Omega,
\end{aligned}
\right.
\end{align} 
where $f$ is a given function defined in $\mathbb{R}^2$. The shape density function takes the form of  $j(\cdot,u)=\frac12|\nabla u|^2$ or $j(\cdot,u)=\frac12|u-u_d|^2$, which corresponds to different application background such as minimal energy dissipation or optimal shape reconstruction, respectively. 

For any smooth vector field $\Bv:\mathbb{R}^2\rightarrow \mathbb{R}^2$ and $t \geq 0$ we denote by $\mathcal{F}^t[\Bv]:\Gamma\rightarrow\mathbb{R}^2$ the flow map determined by the velocity field $\Bv$, defined by the following evolution equation 
\begin{equation} \nonumber
\frac{\rm d}{{\rm d} t}\mathcal{F}^t[\Bv]= \Bv\circ \mathcal{F}^t[\Bv]\,\,\,\mbox{on}\,\,\,\Gamma \,\,\,\mbox{with initial condition}\,\,\, \mathcal{F}^0[\Bv]={\rm id}|_{\Gamma} .\label{domain_ODE}
\end{equation} 
The \emph{Eulerian derivative} of $J(\Gamma)$ at $\Gamma$ in the direction $\Bv$ is defined as 
	\begin{equation}\label{def-Eulerian}\nonumber
	{\rm d}J(\Gamma;\Bv):= \frac{{\rm d}}{{\rm d} t} J(\mathcal{F}^t[\Bv](\Gamma) )\Big|_{t=0} = \lim\limits_{t \rightarrow 0^+} \frac{J(\mathcal{F}^t[\Bv](\Gamma) ) - J(\Gamma)}{t}.
	\end{equation}
It can be formulated as an integral on the boundary in terms of the shape gradient defined on the boundary $\Gamma$ (see \cite[Chap. 9, Sec. 3.4]{DelfourZolesio2011}), i.e., 
\begin{equation}
{\rm d}J(\Gamma;\Bv)= \int_\Gamma  J'(\Gamma) \Bv \cdot \Bn {\rm d}\Gamma .\label{Hadamard_structure}
\end{equation}
In fact, $J'(\Gamma)$ is defined as the function on $\Gamma$ satisfying the relation \eqref{Hadamard_structure}; $\Bn$ is the unit outward normal vector of $\Gamma$. 

If we choose $ j (\cdot,u) = \frac{1}{2}|u - u_{d}|^{2} $ defined in \eqref{functional-J}, then the Eulerian derivative of the functional $J(\Gamma)$   has the following closed form (cf. \cite{GongLiZhu,HiptmairPaganiniSargheini2015}): 
\begin{align}\label{distributed_derivative}
    {\rm d} J(\Gamma; \Bv) ={\rm d} J(\Gamma, u,p;\Bv)  := & \int_\Omega 2\nabla u \cdot\bbD(\Bv) \nabla p + p\nabla f \cdot \Bv {\rm d} x \notag\\
					     & + \int_\Omega \Big(\frac{1}{2}|u - u_{d}|^{2} - \nabla u \cdot \nabla p -up + fp\Big) \nabla \cdot \Bv - (u - u_{d})\nabla u_d \cdot \Bv{\rm d} x,
\end{align}
where $\bbD(\Bv) = (\nabla \Bv + (\nabla \Bv)^\top )/2$, $u$ and $p$ are determined by the following equations:
\begin{align*}
\begin{aligned}
-\Delta u + u =f\quad&\mbox{in}\ \Omega,&&\mbox{with}\,\,\, \partial_{\Bn} u=0\,\,\,\mbox{on}\ \Gamma,  \\
-\Delta p + p =u - u_{d} \quad&\mbox{in}\ \Omega,&&\mbox{with}\,\,\,  \partial_{\Bn} p=0\,\,\,\mbox{on}\ \Gamma.
\end{aligned}
\end{align*}
We can also obtain the boundary type shape derivative defined in \eqref{Hadamard_structure} with the shape gradient $J'(\Gamma) = j(\cdot,u)-\nabla u\cdot\nabla p-up+fp$ (cf. \cite{DelfourZolesio2011}).

In general, the shape gradient $J'(\Gamma)$ admits a low regularity, which may cause stability issues in shape optimization algorithms. For the stability and convergence of the numerical approximations, we consider the $H^1$ shape gradient flow of the shape functional in \eqref{functional-J}, i.e., the evolution of the boundary $\Gamma(t)= \partial\Omega(t) $, $t\in[0,T]$, with initial position $\Gamma^0=\partial\Omega^0$, is determined by the following coupled system of equations (cf. \cite{GongLiRao}): 
\begin{subequations}\label{system1}
\begin{align}
-\Delta u + u =f\quad&\mbox{in}\ \Omega(t),&& \partial_{\Bn} u =0\quad\mbox{on}\ \Gamma(t) , \label{eq-u} \\
-\Delta p + p =j'_u(\cdot,u)\quad&\mbox{in}\ \Omega(t),&& \partial_{\Bn} p =0\quad\mbox{on}\ \Gamma(t) , \label{eq-p} \\
-\Delta \Bw+\Bw=0\quad&\mbox{in}\ \Omega(t),&&
\partial_{\Bn} \Bw =- J'(\Gamma(t))\Bn \quad\mbox{on}\ \Gamma(t) ,\label{eq-w} \\
\partial_t\phibf = \Bw\circ\phibf \quad&\mbox{in}\ \Omega^0, && \phibf(\cdot,0)={\rm id}|_{\Omega^0}\quad\mbox{in}\ \Omega^0 , 
\end{align}
\end{subequations}
where $\phibf(\cdot,t):\Omega^0\rightarrow \Omega(t)$ is the flow map that generates the boundary evolution through $\Gamma(t)=\phibf(\Gamma^0,t)$ under the velocity field $\Bw$, $ j'_u $ is the derivative of $ j (\cdot,u) $ with respect to $ u $. Then the rate of change of the shape functional $ J (\Gamma) $ satisfies the following relation: 
\begin{equation}\label{derivative-J}
\frac{{\rm d} J(\Gamma(t))}{{\rm d} t}  = \int_{\Gamma(t)} J'(\Gamma(t)) \Bw(t) \cdot \Bn\, {\rm d}\Gamma(t) .
\end{equation}
Therefore, testing \eqref{eq-w} with $\Bw$ and using the relation \eqref{derivative-J}, the following property can be shown: 
\begin{equation}\nonumber
\frac{{\rm d}J(\Gamma(t))}{{\rm d} t} =-\|\nabla \Bw(t)\|_{L^2(\Omega(t))}^2-\| \Bw(t)\|_{L^2(\Omega(t))}^2 \leq 0 , 
\end{equation} 
i.e., the shape functional decreases as time grows. Consequently, the $H^1$ shape gradient flow evolves to a critical point of the PDE-constrained shape optimization problem.

For any flow map $\phibf:\Omega^0\times[0,T]\rightarrow \mathbb{R}^2$, we denote by $\Omega[\phibf(\cdot,t)]$ the image of $\Omega^0$ under the map $\phibf(\cdot,t)$. If $\Omega[\phibf(\cdot,t)]$ has a Lipschitz boundary, then the boundary type Eulerian derivative in (\ref{Hadamard_structure}) and the distributed type Eulerian derivative in (\ref{distributed_derivative}) are equivalent (cf. \cite{DelfourZolesio2011}). Consequently, by applying integration by parts, \eqref{eq-w} can be equivalently reformulated into the following weak formulation: 
Find $\Bw\in \bm{H}^1(\Omega)$ such that 
\begin{equation}\nonumber
\begin{aligned}
	\int_\Omega \nabla \Bw : \nabla \Bv + \Bw \cdot \Bv \D x 
	=  \int_{\Gamma}\partial_{\Bn}\Bw \cdot \Bv{\rm d} \Gamma 
	&\,=-\int_{\Gamma} J'(\Gamma)  \Bv\cdot\Bn {\rm d} \Gamma\notag\\
	&\,=  -{\rm d} J (\Gamma; \Bv) = -{\rm d}J(\Gamma,u,p; \Bv) \quad \forall\, \Bv \in \bm{H}^1(\Omega) , 
\end{aligned}
\end{equation}
where the closed form of ${\rm d}J(\Gamma,u,p; \Bv)$ is given by \eqref{distributed_derivative}.  Then the moving boundary problem in \eqref{system1} can be written into the following weak formulation: 
\begin{subequations}\label{system2}
\begin{align}
&\int_{\Omega[\phibf(\cdot,t)]} \nabla u\cdot \nabla \chi_u + u\chi_u {\rm d} x = \int_{\Omega[\phibf(\cdot,t)]} f\chi_u {\rm d} x &&\forall\,\chi_u\in H^1(\Omega[\phibf(\cdot,t)]), \label{system2-u}\\
&\int_{\Omega[\phibf(\cdot,t)]} \nabla p\cdot \nabla \chi_p + p\chi_p {\rm d} x = \int_{\Omega[\phibf(\cdot,t)]} j'_u(x,u) \chi_p {\rm d} x &&\forall\,\chi_p\in H^1(\Omega[\phibf(\cdot,t)]), \label{system2-p}\\
&\int_{\Omega[\phibf(\cdot,t)]}  \nabla \Bw : \nabla \chibf_w +\Bw \cdot \chibf_w {\rm d} x 
=- {\rm d} J(\Gamma(t),u,p;\chibf_w) &&\forall\,\chibf_w\in \bm{H}^1(\Omega[\phibf(\cdot,t)]),  \label{system2-w}\\
&\partial_t\phibf = \Bw\circ\phibf, \label{system2-phi}
\end{align}
\end{subequations}
under the initial condition $\phibf(\cdot,0)={\rm id}|_{\Omega^0}$, with $\Gamma(t)=\partial\Omega(t)$.

\section{The unfitted finite element method}\setcounter{equation}{0}
In the following subsections, we present discretizations of the shape gradient flow system in both time and space, as well as the interface tracking approach.
\subsection{Semi-discrete schemes}
Let $0=t_0< t_1< \cdots < t_N=T$ be the uniform partition of $[0,T]$ with step size $\tau=T/N$. The exact characteristic map from $\Gamma(t_m)$ to $\Gamma(t_n)$ is denoted by $\phibf^{m,n}:=\phibf(\cdot,t_n)\circ(\phibf(\cdot,t_m))^{-1}$ for $0\le m\le n\le N$.
The inverse of $\phibf^{m,n}$ is denoted by
\begin{equation}\nonumber
\phibf^{n,m} := \big(\phibf^{m,n}\big)^{-1}.
\end{equation}
Suppose that we are seeking the approximate solution $\phibf^{0,n}_\tau(\Bx)$ to \eqref{system2-phi} and the approximate solution  $u^n_\tau$, $p^n_\tau$, $\Bw^n_\tau$ to \eqref{system2-u}--\eqref{system2-w}, and that $\phibf^{0,m}_\tau(\Bx)$ have been obtained for all $0\le m<n$ and all $\Bx\in\Gamma^0$.
The approximate domain $\Omega_\tau(t_m)$ at $t_m$ is defined in such a way that $\partial \Omega_\tau(t_m) = \Gamma_\tau(t_m)$.
For any $0\le j\le m$, the maps $\phibf^{j,m}_\tau$: $\Omega_\tau(t_j)\to \Omega_\tau(t_m)$ and
$\phibf^{j,m}$: $\Omega(t_j)\to \Omega(t_m)$ are given by 
\begin{equation}\label{Xmn}\nonumber
\phibf_\tau^{j,m} = \phibf_\tau^{0,m}\circ \big(\phibf_\tau^{0,j}\big)^{-1},\qquad
\phibf^{j,m} = \phibf^{0,m}\circ \big(\phibf^{0,j}\big)^{-1}.
\end{equation}
The inverse of $\phibf_\tau^{j,m}$ is denoted by
$\phibf_\tau^{m,j} := \big(\phibf_\tau^{j,m}\big)^{-1}$.

For the time discretization, we consider a linear semi-implicit Euler method:
\begin{equation}\label{eq:DnX0}
\phibf_\tau^{0,n}(\Bx)
= \phibf_\tau^{0,n-1}(\Bx) + \tau\Bw^{n-1}_\tau\circ\phibf_\tau^{0,n-1}(\Bx), \quad \forall\,\Bx\in \Gamma^0.
\end{equation}
By introducing the incremental map $\phibf_\tau^{n-1,n}$ from $t_{n-1}$ to $t_n$, \eqref{eq:DnX0} can be equivalently rewritten as:
\begin{equation}\label{eq:DnX}\nonumber
\phibf_\tau^{n-1,n}(\Bx)
= \Bx + \tau  \Bw^{n-1}_\tau(\Bx) , \quad \forall\,\Bx\in \Gamma_\tau(t_{n-1}).
\end{equation}

Clearly, $\phibf^{n-1,n}_\tau$ defines the approximate domain $\Omega_\tau(t_n)$ in the $n$-${\rm th}$ time step such that $\partial \Omega_\tau(t_n) = \Gamma_\tau(t_n) = \phibf_{\tau}^{n-1,n}(\Bx)$ for all $\Bx\in \Gamma_\tau(t_{n-1})$.
The approximate solutions $u^n_\tau$, $p^n_\tau$ and $\Bw^n_\tau$ satisfy
\begin{subequations}\label{eq:eq wup}
    \begin{align}
    -\Delta u^n_\tau + u^n_\tau =f\quad&\mbox{in}\ \Omega_\tau(t_n),&& \partial_{\Bn} u^n_\tau =0\quad\mbox{on}\ \Gamma_\tau(t_n) , \label{eq-u} \\
-\Delta p^n_\tau + p^n_\tau =j'_u(\cdot,u)\quad&\mbox{in}\ \Omega_\tau(t_n),&& \partial_{\Bn} p^n_\tau =0\quad\mbox{on}\ \Gamma_\tau(t_n) , \label{eq-p} \\
-\Delta \Bw^n_\tau+\Bw^n_\tau=0\quad &\mbox{in}\ \Omega_\tau(t_n),&& \partial_{\Bn} \Bw^n_\tau = -J'(\Gamma_\tau(t_n))\Bn \quad\mbox{on}\ \Gamma_\tau(t_n).
    \end{align}
    \end{subequations}
 Then the moving boundary problem in \eqref{system1} can be written in the following weak formulation: 
\begin{subequations}\label{system3}\nonumber
\begin{align}
&\int_{\Omega_\tau(t_n)} \nabla u_\tau^n\cdot \nabla \chi_u + u_\tau^n \chi_u {\rm d} x = \int_{\Omega_\tau(t_n)} f\chi_u {\rm d} x &&\forall\,\chi_u\in H^1(\Omega_\tau(t_n)), \label{system3-u}\\
&\int_{\Omega_\tau(t_n)} \nabla p_\tau^n\cdot \nabla \chi_p + p_\tau^n \chi_p {\rm d} x = \int_{\Omega_\tau(t_n)} j'_u(x,u_\tau^n) \chi_p {\rm d} x &&\forall\,\chi_p\in H^1(\Omega_\tau(t_n)), \label{system3-p}\\
&\int_{\Omega_\tau(t_n)}  \nabla \Bw_\tau^n : \nabla \chibf_w +\Bw_\tau^n \cdot \chibf_w {\rm d} x 
=- {\rm d} J\left(\Gamma_\tau(t_n),u_\tau^n,p_\tau^n;\chibf_w\right) &&\forall\,\chibf_w\in \bm{H}^1(\Omega_\tau(t_n)),  \label{system3-w}\\
&\phibf_\tau^{0,n+1}(\Bx)
= \phibf_\tau^{0,n}(\Bx) + \tau\Bw^{n}_\tau\circ\phibf_\tau^{0,n}(\Bx), \quad &&\forall\,\Bx\in \Gamma^0, 
\end{align}
\end{subequations}
under the initial condition $\phibf_\tau^0(\cdot)={\rm id}|_{\Gamma^0}$.

\subsection{Interface-tracking and finite element meshes}
In this subsection, we consider spatial discretization. Since we use the unfitted finite element method to solve the coupled system \eqref{system2}, we need to find an efficient way to track the evolution of the domain boundary. In this paper, we assume that the initial domain boundary $\Gamma^0$ can be represented by a cubic spline with control points $\Cp^0=\left\{\Bp^0_j: 0\le j\le J\right\}$. Then the evolution of the domain can be realized by the movement of the control points, which are driven by the discretization of the velocity in \eqref{system2-w}.

We suppose that the arc length of $\Gamma^0$ between $\Bp^0_0$ and $\Bp^0_{j}$ is equal to $L^0_j=j\eta$ for $1\le j\le J$, where $\eta:= L^0/J$ and $L^0$ is the arc length of $\Gamma^0$. For all $0<n \leq N$, suppose that we are given the set of control points $\Bp_j^{n-1}$ at time $t_{n-1}$. Then the control points at $t_n$ are obtained by 
  \begin{equation}
     \Bp_j^n = \Bp_j^{n-1} + \tau  \Bw_h^{n-1}(\Bp_j^{n-1}), \quad 0\le j\le J,\label{control_point}
 \end{equation}
where $\Bw_h^{n}$ will be given later in \eqref{discrete_w}.

During domain evolution, we denote by $\Gamma^n_\eta$ the discrete boundary represented by a cubic spline, while the corresponding approximate domain $\Omega_{\eta}^n$ is defined such that $\partial \Omega_{\eta}^n = \Gamma^n_\eta$. The parametric representation $\BZ^n_{\eta}\in C^2$ of $\Gamma^n_\eta$ is a cubic spline and satisfies
	\begin{equation*}
		\BZ^n_{\eta}(L^n_j) = \Bp^n_j, \quad
		L^n_j = \sum_{i=0}^{j-1}\SN{\Bp^n_{i+1}-\Bp^n_{i}},\quad 0\le j\le J.
	\end{equation*}

Let $D$ be an open bounded domain (e.g. a rectangle) that satisfies $\Omega(t)\subset D$ 
for all $0\le t\le T$.
We denote by $\Ct_h$ a family of uniform partitions of $\bar{D}$ into closed regular squares with the mesh parameter $h$. 
This partition induces two collections of elements that cover $\Omega^n_{\eta}$ and $\Gamma^n_{\eta}$, respectively:  
\begin{equation}
    \begin{aligned}\label{eq:Cth}\nonumber
	   \Ct^n_h &:= \left\{K\in\Ct_h:\; 
		\ol K\cap\Omega^n_{\eta}\neq \emptyset \right\}, \\
		\Ct^n_{h,B} &:= \left\{K\in \Ct^n_h:\; 
		\ol K\cap\Gamma^n_{\eta}\neq \emptyset  \right\} . 
    \end{aligned}
\end{equation}
The covers $\Ct^n_h$, $\Ct_{h,B}^n$ generate  fictitious domains
$\Omega^n_h:= \mathrm{interior}\big(\cup_{K\in\Ct^n_h}K\big)$ and  
$\Omega^n_B:= \mathrm{interior}\big(\cup_{K\in\Ct^n_{h,B}       }K\big)$, respectively. Furthermore, let $\Ce_h$ be the set of all edges in $\Ct_h$. The set of interior edges associated with the boundary elements is defined as:
	\begin{equation}\label{eq:Ceb}\nonumber
		\Ce_{h,B}^{n}= \big\{E\in\Ce_h: \; 
		\exists K\in \Ct^n_{h,B}\;\;
		\hbox{s.t.}\;\; E\subset\partial K
		\backslash \partial \Omega^n_h\big\}.
	\end{equation}
Figure \ref{fig:mesh} provides a visual representation of the meshes and the associated fictitious domains.

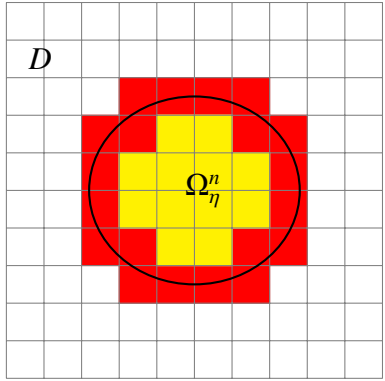
\begin{figure}[!htbp]
\centering
\begin{tikzpicture}[scale =2.5]
        \filldraw[red](0.2*3,0.2*3)--(0.2*3,0.2*2)--(0.2*7,0.2*2)--(0.2*7,0.2*3)--(0.2*8,0.2*3)--(0.2*8,0.2*7)--(0.2*7,0.2*7)--(0.2*7,0.2*8)--(0.2*3,0.2*8)--(0.2*3,0.2*7)--(0.2*2,0.2*7)--(0.2*2,0.2*3)--(0.2*3,0.2*3);
        
        \filldraw[white](0.2*3,0.2*6)--(0.2*4,0.2*6)--(0.2*4,0.2*7)--(0.2*5,0.2*7)--(0.2*6,0.2*7)--(0.2*6,0.2*6)
        --(0.2*7,0.2*6)--(0.2*7,0.2*5)--(0.2*7,0.2*4)--(0.2*6,0.2*4)--(0.2*6,0.2*3)--(0.2*5,0.2*3)--(0.2*4,0.2*3)
        --(0.2*4,0.2*4)--(0.2*3,0.2*4)--(0.2*3,0.2*5)--(0.2*3,0.2*6);

        \filldraw[yellow](0.2*3,0.2*6)--(0.2*4,0.2*6)--(0.2*4,0.2*7)--(0.2*5,0.2*7)--(0.2*6,0.2*7)--(0.2*6,0.2*6)
         --(0.2*7,0.2*6)--(0.2*7,0.2*5)--(0.2*7,0.2*4)--(0.2*6,0.2*4)--(0.2*6,0.2*3)--(0.2*5,0.2*3)--(0.2*4,0.2*3)--(0.2*4,0.2*4)--(0.2*3,0.2*4)--(0.2*3,0.2*5)--(0.2*3,0.2*6);

         \draw [step =0.2cm,gray,thin] (0,0) grid (2cm,2cm);
         \draw[black, thick] (1,1) ellipse [x radius=0.56cm, y radius=0.5cm];
    \node[left] at (0.3,1.7) {$D$};
    \node[left] at (1.2,0.2*5) {$\Omega_\eta^n$};
\end{tikzpicture}
\caption{\small  The domain $D$ and its partition $\Ct_h$, the approximate domain $\Omega_\eta^n$, the set of red and yellow squares $\Ct^n_h$, the set of red squares $\Ct^n_{h,B}$, the union of red and yellow squares $\Omega^n_h$, the union of red  squares $\Omega_B^n$.} \label{fig:mesh}
\end{figure}

Next, we define in the following the finite element spaces in $D$, $\Omega^n_h$ and $\Omega_B^n$, respectively,
\begin{align*}
V(k,\Ct_h) :=\,& \big\{v\in\Hone[D]:
		v|_K\in Q_k(K),\;\forall\,K\in \Ct_h\big\}, \\
V(k,\Ct^n_h):=\,&  \big\{v|_{\Omega^n_h}:
		v\in V(k,\Ct_h) \big\} , \quad 
V(k,\Ct^n_{h,B}):=\,\big\{v|_{\Omega_B^n}:
		v\in V(k,\Ct_h) \big\},
\end{align*}
where $Q_k$ contains polynomials with each variable having degree at most $k$. 
The space of piecewise regular functions over $\Ct^n_h$ is defined by
\begin{equation}\label{eq:FEM space}\nonumber
H^m(\Ct^n_h) := \big\{v\in\Ltwo[\Omega^n_h]:\;
v|_K\in H^m(K),\;\forall\, K\in\Ct^n_h\big\},\qquad m\ge 1.
\end{equation}

\subsection{Fully discrete schemes}
In this subsection, we present the fully discrete schemes for the shape gradient flow system. 
Once we obtain $\Gamma_{\eta}^n$, we use the standard unfitted finite element methods to discretize \eqref{eq:eq wup}. That is, we look for $u_h^n\in V(k,\Ct_{h}^n)$,  $p_h^n \in V(k,\Ct_{h}^n)$ and $\Bw_h^n \in \BV(k,\Ct_h^n) := V(k,\Ct_{h}^n)^2$  such that 
\begin{subequations}\label{fe_weak}
\begin{align}
     & \int_{\Omega_{\eta}^n} \nabla u_h^n \cdot \nabla \chi_u  + u_h^n \chi_u \D x + \mathscr{J}_h^n(u_h^n,\chi_u) = \int_{\Omega_{\eta}^n} f \chi_u \mathrm{d} x\quad &&\forall \chi_u\in V(k,\Ct_{h}^n),\label{discrete_u}\\
    & \int_{\Omega_{\eta}^n} \nabla p_h^n \cdot \nabla \chi_p + p_h^n \chi_p\mathrm{d} x + \mathscr{J}_h^n(p_h^n,\chi_p) = \int_{\Omega_{\eta}^n} j_u^{\prime}\left(x, u_h^n\right) \chi_p \mathrm{d} x\quad &&\forall \chi_p\in V(k,\Ct_{h}^n), \label{discrete_p}\\
    & \int_{\Omega_{\eta}^n} \nabla \Bw_h^n: \nabla \chibf_w+\Bw_h^n \cdot \chibf_w \mathrm{d} x + \mathscr{J}_h^n(\Bw_h^n,\chibf_w) =-\mathrm{d} J\left( \Gamma_{\eta}^n, u_h^n, p_h^n ; \chibf_w\right)\quad &&\forall \chibf_w\in \BV(k,\Ct_{h}^n),\label{discrete_w}
\end{align}
\end{subequations}
where $\mathscr{J}_h^n$ is the ghost-penalty stabilization term\cite{hansbo2018cutfem}, defined by
\begin{align}\label{eq:ghost penalty}
    \mathscr{J}_h^n(u,v) &:= \alpha \sum_{E\in \Ce_{h,B}^n} \sum_{s=1}^k h^{2s-1}\int_E \llbracket{\partial_{\bm{n}}^s u}\rrbracket \llbracket{\partial_{\bm{n}}^s v}\rrbracket \D s.
\end{align}
Here $\alpha$ is a positive constant whose value will be specified in the section on numerical experiments, $\partial^s_{\bm{n}} u$ denotes the $s^{\text{th}}$-order normal derivative of $u$ on the edge $E$, and $\bm{n}$ denotes the unit outward normal of $E$. Moreover, the jump of a quantity $v$ over a face $E$ is defined as $\llbracket{v}\rrbracket =v|_{K_1}-v|_{K_2}$, with $E = K_1 \cap K_2 \in \Ce_{h,B}^n$. 
Now we can define a new bilinear form on $H^1(\Omega_h^n)$ as follows:
\begin{align}\nonumber
    \mathscr{A}_h^n(u,v):= (\nabla u,\nabla v)_{\Omega_{\eta}^n} + (u, v)_{\Omega_{\eta}^n} + \mathscr{J}_h^n(u,v),
\end{align}
then the discrete form can be written as 
\begin{subequations}\label{fully_discrete}
\begin{align}
    &\mathscr{A}_h^n(u_h^n,\chi_u) = (f,\chi_u)_{\Omega_{\eta}^n} \quad &&\forall \chi_u\in V(k,\Ct_{h}^n), \\
    &\mathscr{A}_h^n(p_h^n,\chi_p) = (j_u'(\cdot,u_h^n),\chi_p)_{\Omega_{\eta}^n} \quad &&\forall \chi_p\in V(k,\Ct_{h}^n), \\
    &\mathscr{A}_h^n(\Bw_h^n,\chibf_w)= -\D J(\Gamma_{\eta}^n,u_h^n,p_h^n;\chibf_w) \quad &&\forall \chibf_w\in \bm{V}(k,\Ct_{h}^n).
\end{align}
\end{subequations}

\begin{Assumption}\cite[Assumption 4.1]{Ma-Zhang-Zheng-2022}\label{norm-equi}
There exist an integer $I>0$ and a constant $\gamma >0$ which are independent of $h$ and $\tau$ such that, for any $K\in \mathcal{T}^n_{h,B}$, one can find at most $I$ elements $\{K_j \}_{j=1}^I \subset \mathcal{T}^n_{h}$ satisfying $K_1 = K, K_{j-1} \cap K_j \in \Ce_{h,B}^{n}$ for $1 < j \leq I$ and that $K_I \cap \Omega_{\eta}^n$ contains a disk of radius $\gamma h$.
\end{Assumption}

Under Assumption \ref{norm-equi}, we know from \cite[Lemma 4.2]{Ma-Zhang-Zheng-2022} that the penalty term $\mathscr{J}_h^n(\cdot,\cdot)$ has the following properties for any $v_h \in V(k,\Ct^n_h)$:
\begin{subequations}\label{eq:norm inequality}
\begin{align}
    |v_h|^2_{H^1(\Omega_h^n)} &\leq C\left(\mathscr{J}_h^n(v_h, v_h) + |v_h|^2_{H^1(\Omega_{\eta}^n)}\right),\label{eq:norm inequality-h1} \\
    ||v_h||^2_{L^2(\Omega_h^n)} &\leq C\left(h^2\mathscr{J}_h^n(v_h, v_h) + ||v_h||^2_{L^2(\Omega_{\eta}^n)}\right),\label{eq:norm inequality-l2} 
\end{align}
\end{subequations}
Moreover, one can show that 
\begin{eqnarray}\label{coercivity}
   C\|v_h\|^2_{H^1(\Omega_h^n)}\leq \mathscr{A}_h^n(v_h,v_h),\quad \mathscr{A}_h^n(u_h,v_h)\leq C\|u_h\|_{H^1(\Omega_h^n)}\|v_h\|_{H^1(\Omega_h^n)}\quad\forall u_h,v_h\in V(k,\Ct^n_h).
\end{eqnarray}
Then the state equation, the adjoint state equation, and the velocity equation in (\ref{fully_discrete}) admit a unique solution, respectively.

For the error analysis of the domain approximation, we need to define the discrete flow map function $\phibf_h^n$. Similarly to the definition of $\phibf_\tau^{0, n}$, we define  $\phibf_h^n$ as
\begin{equation}\label{dis_map}
\phibf_h^n(\Bx)
= \phibf_h^{n-1}(\Bx) + \tau\Bw^{n-1}_h\circ\phibf_h^{n-1}(\Bx) \quad \forall\,\Bx\in \Gamma^0,
\end{equation}
with the initial condition $\phibf_h^0(\cdot) = {\rm id}|_{\Gamma^0}$. Furthermore, we can extend the above definition to the interior of $\Omega^0$, such that
\begin{equation}\label{dis_map_domain}
\phibf_h^n(\Bx)
= \phibf_h^{n-1}(\Bx) + \tau\Bw^{n-1}_h\circ\phibf_h^{n-1}(\Bx) \quad \forall\,\Bx\in \Omega^0.
\end{equation}
We denote by $\Omega^n$ the image of $\Omega^0$ under the mapping $\phibf_h^n$, i.e., $\Omega^n:=\phibf_h^n(\Omega^0)$. We note that $\partial\Omega^n$ is continuous and consists of piecewise segments. According to \eqref{control_point} and \eqref{dis_map_domain}, $\Omega^n$ coincide with $\Omega_\eta^n$ at the control points $\{\Bp_j^n\}$. Although they are not identical, the latter can be viewed as a cubic spline interpolation of the former, or the former can be viewed as a piecewise interpolation of the latter.

In the following analysis, we extend the finite element solution $\Bw_h^{n-1}$ from $\Omega_h^{n-1}$ to $D$ such that $\Bw_h^{n-1}$ vanishes at the nodes outside the domain $\bar{\Omega}_h^{n-1}$. Then we have $||\Bw_h^{n-1}||_{\BH^1(D)} \leq C ||\Bw_h^{n-1}||_{\BH^1( \Omega_h^{n-1})}$ (cf. \cite{Ma-Zhang-Zheng-2022,HuangWuXiao17}). After extension of $\Bw_h^{n-1}$ to $D$, one can guarantee that the mapped point $\phibf_h^{n-1}(\Bx) \in \Omega^{n-1} \subset D$ for any $\Bx \in \Omega^0$, which means that $\Bw_h^{n-1}$ is well-defined at the point $\phibf_h^{n-1}(\Bx)$. Then the extension of the definition in (\ref{dis_map}) from $\Gamma^0$ to $\Omega^0$ as in (\ref{dis_map_domain}) is meaningful. We should emphasize here that the definition of the discrete flow map is compatible with the evolution of control points, since for any $0 \leq j \leq J$ and $0 < n \leq N$ we have $\Bp_j^{n}=\phibf_h^n(\Bp_j^{n-1})$ according to \eqref{control_point}. In actual computations, we use the control points \eqref{control_point} for interface tracking to reduce computational cost. The domain extension \eqref{dis_map_domain} is introduced only for theoretical analysis, since it allows us to work on a fixed reference domain when deriving error estimates.

\section{Error Analysis}\setcounter{equation}{0}

We can define Sobolev extensions of the exact solutions $u^n:=u(t_n)$, $p^n:=p(t_n)$, $\Bw^n:=\Bw(t_n)$ from $\Omega(t_n)$ to $\mathbb{R}^2$ which remain invariant in $\Omega(t_n)$, and still denote them as $u^n$, $p^n$, $\Bw^n$. Let $\phibf^n$ be the exact flow map of time $t_n$ and $\phibf_h^n \in V(k,\Ct^0_h)$ be the discrete solution defined in \eqref{dis_map_domain}. Let $\hat{u}^n, \hat{p}^n\in V(k,\Ct^n_h)$ and $\hat{\Bw}^n \in \bm{V}(k,\Ct^n_h)$ be the Lagrange interpolations of the Sobolev extensions $u^n, p^n, \Bw^n$, while $u_h^n, p_h^n, \Bw_h^n$ are the finite element solutions. For any $\chi_u,\chi_p\in V(k,\Ct^n_h)$ and $\chibf_w\in \bm{V}(k,\Ct^n_h)$, we have the following weak formulations:
\begin{subequations}\label{interpolation_weak}
\begin{align}
& \int_{\Omega_{\eta}^n} \nabla \hat{u}^n \cdot \nabla \chi_u + \hat{u}^n \chi_u \mathrm{d} x+ \mathscr{J}_h^n(\hat{u}^n,\chi_u) = \int_{\Omega_{\eta}^n} f \chi_u \mathrm{d} x+\int_{\Omega_{\eta}^n} d_u^n \chi_u \mathrm{d} x, \label{eq:hat u}\\
& \int_{\Omega_{\eta}^n} \nabla \hat{p}^n \cdot \nabla \chi_p + \hat{p}^n \chi_p\mathrm{d} x + \mathscr{J}_h^n(\hat{p}^n,\chi_p) = \int_{\Omega_{\eta}^n} j_u^{\prime}\left(x, \hat{u}^n\right) \chi_p \mathrm{d} x + \int_{\Omega_{\eta}^n} d_p^n \chi_p \mathrm{d} x, \label{eq:hat p} \\
& \int_{\Omega_{\eta}^n} \nabla \hat{\Bw}^n: \nabla \chibf_w+\hat{\Bw}^n \cdot \chibf_w \mathrm{d} x + \mathscr{J}_h^n(\hat{\Bw}^n,\chibf_w) =-\mathrm{d} J\left( \Gamma_{\eta}^n, \hat{u}^n, \hat{p}^n ; \chibf_w\right) + \int_{\Omega_{\eta}^n} \Bd_w^n \cdot \chibf_w \mathrm{~d} x, \label{eq:hat w}
\end{align}
\end{subequations}
where $d_u^n$, $d_p^n$ and $\Bd_w^n$ are some defects that will be estimated in subsection \ref{consistency}.
Here, $\Omega(t_n)$ denotes the true domain and $\Omega^n_{\eta}$ denotes the cubic spline domain of the FE solution. Define
\begin{equation}\label{eq:sta err}
\begin{aligned}
    &\Be_\phi^n(\Bx) = \phibf_h^n(\Bx) - \phibf^n(\Bx),  &&\Be_w^n(\Bx) = \Bw_h^n(\Bx) - \hat{\Bw}^n(\Bx), \\
    &e_{u}^n(\Bx) = u_h^n(\Bx) - \hat{u}^n(\Bx), &&e_{p}^n(\Bx) = p_h^n(\Bx) - \hat{p}^n(\Bx),
\end{aligned}
\end{equation}
then the following estimate holds.
\begin{Theorem}\label{Thm:4.1}
    Suppose that for all $t \in [0, T]$, the domain $\Omega(t)$ is of class $C^2$. Furthermore, assume that the given function $f \in C^k$, the flow map $\phi(\cdot, t) \in C^2([0,T];C^1(\Omega^0))$, the exact solutions $u(\cdot, t), p(\cdot, t)$ and the exact velocity $\Bw(\cdot, t)$ are of type  $C([0,T];C^k(D))$. Then for  $k \geq 2$, $\tau=\mathcal{O}(h^k)$, $\eta=\mathcal{O}(h^{k+\frac{1}{2}})$, and sufficiently small $h$, the following estimate holds for any $0 \leq n \leq N$:
    \begin{equation}\label{error}
        \N{\Be_\phi^n}_{H^1(\Omega^0)} + \N{e_u^n}_{H^1(\Omega_h^n)} +\N{e_p^n}_{H^1(\Omega_h^n)} +\N{\Be_w^n}_{\BH^1(\Omega_h^n)} \leq  C(\tau + h^k ).
    \end{equation}
\end{Theorem}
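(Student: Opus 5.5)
The proof will be an induction on $n$, with a bootstrap (a priori) hypothesis. We assume that for all $m\le n-1$
\[
\N{\Be_\phi^m}_{W^{1,\infty}(\Omega^0)}+\N{\Be_w^m}_{\BW^{1,\infty}(\Omega_h^m)}\le h^{\frac12}.
\]
This is available from the $H^1$ bound at earlier steps via an inverse estimate, since $\tau+h^k\le Ch^{k}$ and $k\ge2$, so $h^{-1}h^{k}\le h^{\frac12}$. Under this hypothesis the discrete maps $\phibf_h^m$ stay uniformly bi-Lipschitz, so $\Omega^m$ is a small perturbation of $\Omega(t_m)$. The cubic spline domain $\Omega_\eta^m$ is an $\mathcal{O}(\eta^4+\eta h^{-?})$ perturbation of $\Omega^m$: it interpolates $\Omega^m$ at the control points. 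With $\eta=\mathcal{O}(h^{k+\frac12})$, the Hausdorff distance and the measure of the symmetric difference between $\Omega_\eta^m$ and $\Omega(t_m)$ are then bounded by $C(\N{\Be_\phi^m}_{L^\infty(\Gamma^0)}+h^{k+\frac12})$.

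\textbf{Step 1 (consistency).} We will estimate the defects $d_u^n,d_p^n,\Bd_w^n$ in \eqref{interpolation_weak}, interpreted as functionals on $V(k,\Ct_h^n)$ measured in the dual of the $H^1(\Omega_h^n)$ norm. Three contributions arise.
\begin{itemize}
\item[(a)] The interpolation error of $\hat u^n$, together with the ghost-penalty consistency $\mathscr{J}_h^n(\hat u^n,\chi)\le Ch^k\N{\chi}_{H^1(\Omega_h^n)}$. This uses that the Sobolev extensions are $C^k$, so the jumps of normal derivatives of the true extension vanish.
\item[(b)] The geometric mismatch between $\Omega_\eta^n$ and $\Omega(t_n)$. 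Integrating by parts on $\Omega_\eta^n$ leaves the boundary term $\int_{\Gamma_\eta^n}\partial_{\Bn}u^n\chi$. Since $\partial_{\Bn}u^n=0$ on $\Gamma(t_n)$ and the normals differ by $\mathcal{O}$ of the $W^{1,\infty}$ distance, this term is bounded by $C(\N{\Be_\phi^n}_{H^1(\Omega^0)}+h^{k})\N{\chi}_{H^1}$. Here we use a trace inequality on the strip between the two boundaries and the map $\phibf_h^n\circ(\phibf^n)^{-1}$ to transport integrals.
\item[(c)] For $\Bd_w^n$, the difference between $\D J(\Gamma_\eta^n,\cdot)$ and $\D J(\Gamma(t_n),\cdot)$, handled in the same way using the distributed form \eqref{distributed_derivative}. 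This form involves only volume integrals, which is why the $H^1$ Hilbertian setting is convenient.
\end{itemize}
The target is
\[
\N{d^n}_{*}\le C(h^k+\N{\Be_\phi^n}_{H^1(\Omega^0)}).
\]

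\textbf{Step 2 (stability of the elliptic parts).} We subtract \eqref{interpolation_weak} from \eqref{fully_discrete} and test with $e_u^n$, $e_p^n$, $\Be_w^n$. Coercivity \eqref{coercivity} then gives
\[
\N{e_u^n}_{H^1(\Omega_h^n)}\le C(h^k+\N{\Be_\phi^n}_{H^1}).
\]
For $e_p^n$ we additionally use that $j'_u$ is Lipschitz, obtaining the same bound plus $\N{e_u^n}$. For $\Be_w^n$, the difference $\D J(\Gamma_\eta^n,u_h^n,p_h^n;\cdot)-\D J(\Gamma_\eta^n,\hat u^n,\hat p^n;\cdot)$ is bilinear in $(u,p)$. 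It is controlled by $\N{e_u^n}_{H^1}+\N{e_p^n}_{H^1}$ times $W^{1,\infty}$ bounds of $\hat u^n,\hat p^n$, plus quadratic terms involving $\N{e_u}_{W^{1,\infty}}\le Ch^{-1}\N{e_u}_{H^1}$. These quadratic terms are absorbed using the bootstrap. Hence all three elliptic errors are bounded by $C(h^k+\N{\Be_\phi^n}_{H^1(\Omega^0)})$.

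\textbf{Step 3 (flow map recursion).} Subtracting a Taylor expansion of $\phibf^n=\phibf^{n-1}+\tau\Bw^{n-1}\circ\phibf^{n-1}+\mathcal{O}(\tau^2)$, which uses $\phibf\in C^2([0,T];C^1)$, from \eqref{dis_map_domain} gives
\[
\Be_\phi^n=\Be_\phi^{n-1}+\tau\bigl[\Be_w^{n-1}\circ\phibf_h^{n-1}+(\hat\Bw^{n-1}-\Bw^{n-1})\circ\phibf_h^{n-1}+\Bw^{n-1}\circ\phibf_h^{n-1}-\Bw^{n-1}\circ\phibf^{n-1}\bigr]+\mathcal{O}(\tau^2).
\]
We take $H^1(\Omega^0)$ norms and apply the chain rule with the bi-Lipschitz bound on $\phibf_h^{n-1}$. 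The last difference is bounded by $C\N{\Be_\phi^{n-1}}_{H^1}$ using $\Bw\in C^2$ (i.e. $k\ge2$); its gradient involves $\nabla\Bw^{n-1}(\phibf_h^{n-1})-\nabla\Bw^{n-1}(\phibf^{n-1})$ times $\nabla\phibf_h^{n-1}$. The extension bound $\N{\Be_w^{n-1}}_{\BH^1(D)}\le C\N{\Be_w^{n-1}}_{\BH^1(\Omega_h^{n-1})}$ is also needed. Combining with Step 2 at level $n-1$ yields
\[
\N{\Be_\phi^n}_{H^1}\le(1+C\tau)\N{\Be_\phi^{n-1}}_{H^1}+C\tau(h^k+\tau).
\]
Discrete Gronwall then gives $\N{\Be_\phi^n}_{H^1}\le C(\tau+h^k)$, and Step 2 finishes \eqref{error}. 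Finally we close the bootstrap using the inverse estimate and $\tau=\mathcal{O}(h^k)$. For $\Be_\phi$, whose $W^{1,\infty}$ control is needed, we apply the inverse estimate to its finite element part on $\Ct_h^0$.

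\textbf{Main obstacle.} We expect Step 1(b)–(c), the geometric consistency error, to be the hardest part. One must show that replacing $\Omega(t_n)$ by the spline domain $\Omega_\eta^n$ costs only $\N{\Be_\phi^n}_{H^1}$ plus $h^k$, not a lower power. For this we would construct an explicit map $\Omega_\eta^n\to\Omega(t_n)$ by composing the spline-to-polygon map, which has error $\mathcal{O}(\eta^{?})$ controlled by $\eta=\mathcal{O}(h^{k+\frac12})$ and an inverse estimate, with $\phibf^n\circ(\phibf_h^n)^{-1}$. We would then compare integrals through the change of variables, so that Jacobian differences produce exactly $\N{\nabla\Be_\phi^n}_{L^2}$.
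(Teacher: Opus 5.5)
Your skeleton matches the paper's proof: the $W^{1,\infty}$ bootstrap closed by inverse estimates and $\tau=\mathcal{O}(h^k)$, coercivity-based stability using the Lipschitz structure of $j'_u$ and the bilinear structure of $\D J$, the Taylor/chain-rule recursion for $\Be_\phi^n$, and discrete Gronwall. Your Steps 2 and 3 essentially reproduce its stability and flow-map lemmas. The gap is in Step 1(b), which is where the whole estimate hinges.

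You integrate by parts on $\Omega_\eta^n$ and bound $\int_{\Gamma_\eta^n}\partial_{\Bn}u^n\,\chi$ through the pointwise difference of normals. That controls the geometric error by $\N{\Bn_\eta-\Bn}_{L^2(\Gamma)}$, i.e.\ by a boundary trace of the tangential derivative of $\Be_\phi^n$ (for the spline, by divided differences of $\Be_\phi^n$ at the control points). If you use the $W^{1,\infty}$ distance as you say, it is controlled by $\N{\Be_\phi^n}_{W^{1,\infty}}$ instead. Neither quantity is bounded by $\N{\Be_\phi^n}_{H^1(\Omega^0)}$. Passing through an inverse estimate costs a factor $h^{-s}$ with $s\in\{\tfrac12,1\}$. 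Step 3 then reads $\N{\Be_\phi^n}\le(1+C\tau h^{-s})\N{\Be_\phi^{n-1}}+\cdots$, and Gronwall produces the factor $e^{CTh^{-s}}$, which destroys the estimate. A trace inequality on the strip does not repair this, because the obstruction is the derivative of the boundary displacement, not its size. You would need an extra idea, e.g.\ an $H^{-1/2}(\Gamma)$--$H^{1/2}(\Gamma)$ duality for the linearized normal perturbation. Two further omissions: the volume residual $-\Delta u^n+u^n-f\neq0$ of the extension on $\Omega_\eta^n\setminus\Omega(t_n)$ is dropped (harmless, but it must appear), and the spline error is left as $\mathcal{O}(\eta^{?})$.

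The paper never creates a boundary term. It splits the mismatch into two volume comparisons.
\begin{enumerate}
\item[(i)] For $\Omega_\eta^n$ versus $\Omega^n=\phibf_h^n(\Omega^0)$, it uses only that the symmetric difference is a strip of width $\mathcal{O}(\eta)$, together with the inverse-type bound $\N{v_h}^2_{L^2(\Omega_\eta^n\triangle\Omega^n)}\le Ch^{-1}\eta\N{v_h}^2_{L^2(\Omega_h^n)}$ of Lemma~\ref{Lemma:4.4}. This yields $h^{-1/2}\eta+h^{k-1/2}\eta^{1/2}$, which is exactly why $\eta=\mathcal{O}(h^{k+1/2})$.
\item[(ii)] For $\Omega^n$ versus $\Omega(t_n)$, it tests the exact weak form with the transported function $e_u^n\circ(\BT_1^n)^{-1}\in H^1(\Omega(t_n))$. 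Here $\BT_\theta^n=\mathrm{id}+\theta\Be_v^n$ and $\Be_v^n=-\Be_\phi^n\circ(\phibf_h^n)^{-1}$. It then differentiates in $\theta$. The transported error has zero material derivative and $\partial^\bullet\nabla e_{u,\theta}^n=-(\nabla\Be_{v,\theta}^n)^{\top}\nabla e_{u,\theta}^n$. Hence only $\Be_{v,\theta}^n$, $\nabla\Be_{v,\theta}^n$ and $\nabla\cdot\Be_{v,\theta}^n$ appear, in $L^2$, paired against $\nabla e$ in $L^2$. Lemma~\ref{lem:vtophi} then gives $\N{\Be_v^n}_{H^1}\le C\N{\Be_\phi^n}_{H^1(\Omega^0)}$.
\end{enumerate}
The Neumann condition stays inside the weak form, so no normals are ever compared. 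Your closing paragraph (change of variables so that Jacobian differences produce $\N{\nabla\Be_\phi^n}_{L^2}$) describes this argument. Replace Step 1(b) by it, and replace the spline-to-polygon map by the strip estimate.
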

\begin{Remark}
We remark that the above error estimate is only local in time during the shape optimization procedure. For our error analysis, we need the assumption that the arc length between two neighboring control points should be uniformly bounded from above and below by the parameter $\eta$, which, however, may be violated during the evolution of the domain boundary. In such cases, we can add or remove control points as done in \cite{Ma-Zhang-Zheng-2022}. We also refer to \cite{GongLiRao} for an evolving finite element method where remeshing may be necessary for large shape deformations. 
\end{Remark}

To prove Theorem~\ref{Thm:4.1}, we require the following assumption to hold  for all time steps $0 \leq n \leq N$:
\begin{equation}\label{assum}
    \|\Be_\phi^n\|_{W^{1,\infty}(\Omega^0)} \leq \min\left\{1,\frac{1}{2 C_\phi}\right\}, \quad
    ||e_u^n||_{W^{1,\infty}(\Omega_h^n)} \leq 1,\quad
    ||e_p^n||_{W^{1,\infty}(\Omega_h^n)} \leq 1,\quad
    \|\Be_w^n\|_{\BW^{1,\infty}(\Omega_h^n)} \leq 1,
\end{equation}
where $C_\phi := \|(\nabla \phibf)^{-1}\|_{L^{\infty}(0,T;L^\infty(D))}$.
Together with the smoothness of the exact solution and \eqref{assum}, the triangle inequality yields the bounds:
 \begin{subequations}
    \begin{align}
    \|\phibf_h^n\|_{W^{1,\infty}(\Omega^0)} &\leq\, \|\Be_\phi^n\|_{W^{1,\infty}(\Omega^0)} + \|\phibf^n\|_{W^{1,\infty}(\Omega^0)}\leq 1+\|\phibf\|_{L^\infty(0,T;W^{1,\infty}(\Omega^0))}, \label{eq:bd of phi_h}\\
    \|u_h^n\|_{W^{1,\infty}(\Omega_h^n)} & \leq \, \|e_u^n\|_{W^{1,\infty}(\Omega_h^n)}
    +\|\hat{u}^n\|_{W^{1,\infty}(\Omega_h^n)}\nonumber\\
    &\leq \|e_u^n\|_{W^{1,\infty}(\Omega_h^n)}
    +\|\hat{u}^n-u^n\|_{W^{1,\infty}(\Omega_h^n)} + \|u^n\|_{W^{1,\infty}(\Omega_h^n)} \label{eq:bd of uh}\\
    &\leq 1 + Ch + \|u\|_{L^\infty(0,T;W^{1,\infty}(D))} \leq 2 + \|u\|_{L^\infty(0,T;W^{1,\infty}(D))}, \nonumber\\
    \|p_h^n\|_{W^{1,\infty}(\Omega_h^n)} & \leq \, \|e_p^n\|_{W^{1,\infty}(\Omega_h^n)}
    +\|\hat{p}^n\|_{W^{1,\infty}(\Omega_h^n)} \leq 2 + \|p\|_{L^\infty(0,T;W^{1,\infty}(D))}, \label{eq:bd of ph}\\
    \|\Bw_h^n\|_{\BW^{1,\infty}(\Omega_h^n)} & \leq \, \|\Be_w^n\|_{\BW^{1,\infty}(\Omega_h^n)}
    +\|\hat{\Bw}^n\|_{\BW^{1,\infty}(\Omega_h^n)} \leq 2 + \|\Bw\|_{L^\infty(0,T;\BW^{1,\infty}(D))}. \label{eq:bd of wh}
    \end{align}
 \end{subequations}
We establish~\eqref{assum} by mathematical inductions and briefly outline the proof strategy as follows. Suppose that for some $0 \leq n_0 < N$, the estimates in~\eqref{assum} hold for all $0 \leq n \leq n_0$. We then demonstrate that the error bound~\eqref{error} holds at time level $n = n_0 + 1$, which in turn implies that~\eqref{assum} is valid for $n = n_0 + 1$. Consequently, if~\eqref{assum} is satisfied for the initial step $n = 0$, the desired result follows for all $n\leq N$ by induction.

The complete inductive argument is presented in detail in the following. First, if $n=0$, we know that both $\phibf_h^0$ and $\phibf^0$ are identity functions, so that $||\Be_{\phi}^0||_{W^{1,\infty}(\Omega^0)} = 0$. Meanwhile, using (\ref{coercivity}) we know that
\begin{equation}\label{estimate_0}
    C \|e_u^0\|_{H^1(\Omega_h^0)}^2 \leq \mathscr{A}_h^0(e_u^0, e_u^0) = \mathscr{A}_h^0(e_u^0, u_h^0 - u^0 + u^0 - \hat{u}^0) =  \mathscr{A}_h^0(e_u^0, u^0 - \hat{u}^0) + \mathscr{A}_h^0(e_u^0, u_h^0 - u^0).
\end{equation}
Since $u$ is sufficiently smooth, we have $\mathscr{J}_h^0(u^0, e_u^0) = 0$. Given that $\Omega^0$ is prescribed, the domains of integration in \eqref{fe_weak} and \eqref{system2} coincide. By subtracting \eqref{fe_weak} from \eqref{system2} and choosing $e_u^0$ as the test function, the term $\mathscr{A}_h^0(e_u^0, u_h^0 - u^0)$ vanishes. Now, (\ref{estimate_0}) becomes
\begin{equation}\nonumber
    C\|e_u^0\|_{H^1(\Omega_h^0)}^2 \leq \mathscr{A}_h^0(e_u^0, u^0 - \hat{u}^0) \leq \|e_u^0\|_{H^1(\Omega_h^0)} \|u^0 - \hat{u}^0\|_{H^1(\Omega_h^0)} + C h^k \|e_u^0\|_{H^1(\Omega_h^0)}, 
\end{equation}
which yields
\begin{equation}\nonumber
    \|e_u^0\|_{W^{1,\infty}(\Omega_h^0)} \leq Ch^{-1}\|e_u^0\|_{H^1(\Omega_h^0)} \leq C h^{-1} \|u^0 - \hat{u}^0\|_{H^1(\Omega_h^0)} + Ch^{k-1}.
\end{equation}
When we take $k \geq 2$ and $h$ small enough, we have
\begin{equation}\nonumber
    \|e_u^0\|_{W^{1,\infty}(\Omega_h^0)} \leq Ch^{k-1} \leq C h \leq 1.
\end{equation}
Therefore, $e_p^0$ and $\Be_w^0$ can be estimated in a similar way.

Then for any $ 1 \leq m \leq N$, we assume that the property \eqref{assum} holds for all $0 \leq n \leq m-1$. Next, we consider the case $n = m$ and show that, under this circumstance, the error estimate \eqref{error} holds for $n = m$.

\subsection{Estimate for ${\bf e}_\phi^n$ under the property \eqref{assum}}

\begin{Lemma}\label{flow_inver}
     If the time step size $\tau$ is sufficiently small, then the discrete flow map $\phibf_h^n $ defined in (\ref{dis_map_domain}) is invertible for all $0 \leq n \leq m $ with a uniform bound independent of $n$.
 \end{Lemma}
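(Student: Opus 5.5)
The plan is to write the discrete flow map as a composition of incremental maps and show that each factor is a small perturbation of the identity. By \eqref{dis_map_domain},
\begin{equation*}
\phibf_h^n = \big(\mathrm{id} + \tau \Bw_h^{n-1}\big)\circ \phibf_h^{n-1}
= \big(\mathrm{id} + \tau \Bw_h^{n-1}\big)\circ\cdots\circ\big(\mathrm{id} + \tau \Bw_h^{0}\big),
\end{equation*}
where each $\Bw_h^{j}$ is the extension to $D$ described above, so the composition makes sense on $\Omega^0$. It therefore suffices to prove two things: each incremental map $\mathrm{id}+\tau\Bw_h^{j}$, $0\le j\le m-1$, is injective on $D$ with a Lipschitz inverse, and the product of the Lipschitz constants is bounded independently of $n$.

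\textbf{Step 1: a uniform Lipschitz bound on the velocities.} By the induction hypothesis \eqref{assum}, valid for $0\le j\le m-1$, the bound \eqref{eq:bd of wh} gives $\|\nabla\Bw_h^j\|_{L^\infty(\Omega_h^j)}\le C_w:=2+\|\Bw\|_{L^\infty(0,T;\BW^{1,\infty}(D))}$. This bound has to survive the extension to $D$. The extension sets nodal values outside $\bar\Omega_h^j$ to zero, so on the band of elements adjacent to $\Omega_h^j$ the gradient is controlled by an inverse estimate, $|\nabla \Bw_h^j|\le Ch^{-1}\max|\Bw_h^j|$, with nodal values bounded by $\|\Bw_h^j\|_{L^\infty}\le C_w$. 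I expect this to be the main technical obstacle, since a naive bound gives a Lipschitz constant of order $h^{-1}$. Two ways around it: (a) the map only needs to be invertible on $\Omega^{j}=\phibf_h^{j}(\Omega^0)$, which lies inside $\Omega_h^j$ up to the geometric error, so one can restrict to a neighbourhood where $\Bw_h^j$ is not modified; or (b) use the coupling $\tau=\mathcal O(h^k)$ with $k\ge2$, so that $\tau h^{-1}C_w\le Ch^{k-1}\le \frac12$ for small $h$. I would use (b) because it is robust: it yields $\tau\|\nabla\Bw_h^j\|_{L^\infty(D)}\le \frac12$, and in the interior the sharper bound $\tau C_w$ holds.

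\textbf{Step 2: invertibility of one increment.} For $\Bx,\By$ in the relevant set,
\begin{equation*}
|(\Bx+\tau\Bw_h^j(\Bx))-(\By+\tau\Bw_h^j(\By))|\ge (1-\tau L_j)|\Bx-\By|, \qquad L_j:=\|\nabla\Bw_h^j\|_{L^\infty}.
\end{equation*}
Hence $\mathrm{id}+\tau\Bw_h^j$ is injective once $\tau L_j<1$, and onto its image it has a Lipschitz inverse with constant $(1-\tau L_j)^{-1}$. Its Jacobian $\bbI+\tau\nabla\Bw_h^j$ is also invertible with $\|(\bbI+\tau\nabla\Bw_h^j)^{-1}\|\le (1-\tau L_j)^{-1}$ by a Neumann series, so the inverse is piecewise $W^{1,\infty}$.

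\textbf{Step 3: uniform bound by composition.} Composing the increments shows $\phibf_h^n$ is invertible onto $\Omega^n$. Restricting to where the interior bound $L_j\le C_w$ holds (the images $\phibf_h^{j}(\Omega^0)$), and taking $\tau\le 1/(2C_w)$, we get
\begin{equation*}
\|\nabla(\phibf_h^n)^{-1}\|_{L^\infty}\le \prod_{j=0}^{n-1}(1-\tau C_w)^{-1}\le (1+2\tau C_w)^{n}\le e^{2C_wT}.
\end{equation*}
This bound is independent of $n\le m$, and likewise $\|\nabla\phibf_h^n\|_{L^\infty}\le e^{C_wT}$.

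An alternative, or a cross-check, is to use \eqref{assum} directly: $\nabla\phibf_h^n=\nabla\phibf^n(\bbI+(\nabla\phibf^n)^{-1}\nabla\Be_\phi^n)$ with $\|(\nabla\phibf^n)^{-1}\nabla\Be_\phi^n\|\le C_\phi\cdot\frac1{2C_\phi}=\frac12$. This gives a uniformly invertible Jacobian with $\|(\nabla\phibf_h^n)^{-1}\|\le 2C_\phi$. Global injectivity, however, still comes from the composition argument above.
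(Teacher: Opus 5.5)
Your proposal follows essentially the paper's route: write $\nabla\phibf_h^n$ as the product of the incremental Jacobians $\bbI+\tau\nabla\Bw_h^j\circ\phibf_h^j$, invert each factor by a Neumann series using \eqref{eq:bd of wh}, and bound the product by $e^{CT}$. The paper writes $(1-(2+C_w)\tau)^{-n}$ and passes to the limit $e^{(2+C_w)T}$; your inequality $(1+2\tau C_w)^n\le e^{2C_wT}$ gives a cleaner uniform bound. Beyond the paper, which merely asserts bijectivity, you prove global injectivity step by step, at the cost of invoking the coupling $\tau=\mathcal{O}(h^k)$ to control $\nabla\Bw_h^j$ on the extension band; your Step~3 shares with the paper the tacit assumption that \eqref{eq:bd of wh}, stated on $\Omega_h^j$, also holds at the points of $\Omega^j=\phibf_h^j(\Omega^0)$, and your cross-check via \eqref{assum} (plus one crude factor for $n=m$) would in fact remove that assumption.
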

 \begin{proof}
By definition, for any $0 \le n \le m$ we have
\[
\nabla \phibf_h^n
= \bigl(I+\tau \nabla \Bw_h^{\,n-1}\circ \phibf_h^{\,n-1}\bigr)\,\nabla \phibf_h^{\,n-1}
= \prod_{j=0}^{n-1}\bigl(I+\tau \nabla \Bw_h^{\,j}\circ \phibf_h^{\,j}\bigr).
\]
Let $C_w := \|\Bw\|_{L^\infty(0,T;W^{1,\infty}(D))}$ and assume that the time step $\tau$ is sufficiently small such that $(2+C_w)\tau < 1/4$.
Then according to \eqref{eq:bd of phi_h} and \eqref{eq:bd of wh}, $\phibf_h^n:\Omega^0 \to \Omega^n$ is bijective and its Jacobian matrix $\nabla \phibf_h^n(x)$ is invertible for all $x\in\Omega^0$ and $0\le n\le m$.
Moreover, the inverse Jacobian is uniformly bounded:
\[
\|(\nabla \phibf_h^n)^{-1}\|_{L^\infty(\Omega^n)}
\le \left(\frac{1}{1-(2+C_w)\tau}\right)^{n}.
\]
Since $n \le N = T/\tau$, it follows that
\[
\lim_{\tau\to 0^+}\left(\frac{1}{1-(2+C_w)\tau}\right)^{T/\tau} = e^{(2+C_w)T},
\]
hence, $\|(\nabla \phibf_h^n)^{-1}\|_{L^\infty(\Omega^n)}$ remains uniformly bounded independent of $n$ for $0\le n\le m$.
\end{proof}

\begin{Lemma}\label{lem: ephi}
Under the assumptions of Theorem~\ref{Thm:4.1} and the property \eqref{assum}, for any
$1\le n\le m$ it holds that
\[
\|\Be_\phi^n\|_{H^1(\Omega^0)}
\le C\left(\tau
+ \tau\sum_{l=1}^n \|\Be_w^{l-1}\|_{\BH^1(\Omega_h^{l-1})}
+ \tau\sum_{l=1}^n \|\Be_\phi^{l-1}\|_{H^1(\Omega^0)}
+ h^k\right),
\]
where $C>0$ is independent of $\tau$ and $h$, but may depend on $T$ and the norms of the exact solution.
\end{Lemma}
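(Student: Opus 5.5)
Starting from the two time recursions, subtract the exact flow from the discrete one and sum the resulting error equation. The discrete map satisfies \eqref{dis_map_domain}. The exact map satisfies $\phibf^n=\phibf^{n-1}+\int_{t_{n-1}}^{t_n}\Bw(\phibf(\cdot,s),s)\,\D s$, which we rewrite as $\phibf^n=\phibf^{n-1}+\tau\Bw^{n-1}\circ\phibf^{n-1}+\tau R^n$. Since $\phibf\in C^2([0,T];C^1(\Omega^0))$, the truncation error obeys $\|R^n\|_{H^1(\Omega^0)}\le C\tau$. Subtracting gives
\[
\Be_\phi^n=\Be_\phi^{n-1}+\tau\bigl(\Bw_h^{n-1}\circ\phibf_h^{n-1}-\Bw^{n-1}\circ\phibf^{n-1}\bigr)-\tau R^n .
\]
We split the velocity difference into three terms:
\[
\Bw_h^{n-1}\circ\phibf_h^{n-1}-\Bw^{n-1}\circ\phibf^{n-1}
=\Be_w^{n-1}\circ\phibf_h^{n-1}
+(\hat\Bw^{n-1}-\Bw^{n-1})\circ\phibf_h^{n-1}
+\bigl(\Bw^{n-1}\circ\phibf_h^{n-1}-\Bw^{n-1}\circ\phibf^{n-1}\bigr).
\]
Summing over $l=1,\dots,n$ and using $\Be_\phi^0=0$, it suffices to bound each piece in $H^1(\Omega^0)$. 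The claim then follows, with $\tau\sum_{l}\tau\le T\tau$ accounting for the $\tau$ term.

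For the first piece we change variables $\By=\phibf_h^{l-1}(\Bx)$. By Lemma~\ref{flow_inver}, the Jacobians of $\phibf_h^{l-1}$ and of its inverse are uniformly bounded, and $\|\phibf_h^{l-1}\|_{W^{1,\infty}}$ is bounded by \eqref{eq:bd of phi_h}. This gives $\|\Be_w^{l-1}\circ\phibf_h^{l-1}\|_{H^1(\Omega^0)}\le C\|\Be_w^{l-1}\|_{\BH^1(\Omega^{l-1})}$.

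Here lies the main obstacle: $\Omega^{l-1}=\phibf_h^{l-1}(\Omega^0)$ is the piecewise-linear-boundary domain and need not lie inside $\Omega_h^{l-1}$. We handle it by using the extension of $\Be_w^{l-1}$ to $D$ (vanishing at nodes outside $\bar\Omega_h^{l-1}$). Its $H^1(D)$ norm is controlled by $\|\Be_w^{l-1}\|_{\BH^1(\Omega_h^{l-1})}$, as recalled before Section~4, and this applies equally to the difference of discrete functions $\Bw_h-\hat\Bw$. One must also check that $\Omega^{l-1}\subset D$; this holds since $\Omega^{l-1}$ is a small perturbation of $\Omega(t_{l-1})$ by \eqref{assum}.

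For the second piece, the same change of variables together with standard Lagrange interpolation estimates on $D$ for the smooth extension $\Bw^{l-1}\in C^k$ gives $\le Ch^k$. The error contribution is therefore $\tau\sum_l Ch^k\le CTh^k$.

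For the third piece, the $L^2$ part is handled by the mean value theorem: $|\Bw(\phibf_h)-\Bw(\phibf)|\le\|\nabla\Bw\|_\infty|\Be_\phi|$. We use the segment between $\phibf_h^{l-1}(\Bx)$ and $\phibf^{l-1}(\Bx)$, which lies in $D$ by \eqref{assum} and smallness. For the gradient part we write
\[
\nabla(\Bw\circ\phibf_h)-\nabla(\Bw\circ\phibf)=\bigl(\nabla\Bw\circ\phibf_h-\nabla\Bw\circ\phibf\bigr)\nabla\phibf_h+\nabla\Bw\circ\phibf\,\nabla\Be_\phi .
\]
The second summand is bounded by $C\|\nabla\Be_\phi\|_{L^2}$. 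The first uses $\Bw\in C^2$ (since $k\ge2$) together with the $W^{1,\infty}$ bound on $\phibf_h$ from \eqref{eq:bd of phi_h}, giving $\le C\|\Be_\phi^{l-1}\|_{L^2}$. Altogether the third piece is $\le C\|\Be_\phi^{l-1}\|_{H^1(\Omega^0)}$.

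Collecting the three pieces with the truncation error, taking norms, and applying the triangle inequality yields the stated bound with a constant depending on $T$ and the exact solution norms.
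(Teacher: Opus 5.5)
Your argument is the paper's proof almost line by line. It uses the same one-step error identity with an $\mathcal{O}(\tau^2)$ truncation term and the same three-term splitting obtained by inserting $\hat{\Bw}^{l-1}$. The change of variables $\By=\phibf_h^{l-1}(\Bx)$ is justified, as in the paper, by Lemma~\ref{flow_inver} and \eqref{eq:bd of phi_h}, followed by the interpolation bound. The term $\Bw^{l-1}\circ\phibf_h^{l-1}-\Bw^{l-1}\circ\phibf^{l-1}$ is handled by the same Lipschitz/chain-rule argument using $\Bw\in C^2$, and you sum using $\Be_\phi^0=0$. The paper first writes a one-step inequality and then sums, which amounts to the same thing.

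The one place you go beyond the paper is the paragraph on $\Omega^{l-1}\not\subset\Omega_h^{l-1}$, and that fix does not work as written. The flow map \eqref{dis_map_domain} is built from the zero-nodal extension $E_0\Bw_h^{l-1}$. Since $E_0$ is linear, on $\Omega^{l-1}$ you have $E_0\Bw_h^{l-1}-\Bw^{l-1}=E_0\Be_w^{l-1}+(E_0\hat{\Bw}^{l-1}-\Bw^{l-1})$, however you split.

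Your first piece uses $E_0$, which is what gives the $H^1(D)$ stability bound. Your second piece instead uses the Lagrange interpolant of $\Bw^{l-1}$ on all of $D$. These are different extensions of $\hat{\Bw}^{l-1}$, so your three pieces no longer add up to the actual difference on $\Omega^{l-1}\setminus\Omega_h^{l-1}$.

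With the consistent choice, the second piece there is $E_0\hat{\Bw}^{l-1}-\Bw^{l-1}$. Its gradient is of size $h^{-1}$ on exterior elements, because the nodal values drop to zero, so this piece is not $\mathcal{O}(h^k)$ in $H^1$. Choosing the full interpolant instead makes the first piece $E_0\Bw_h^{l-1}-I_h\Bw^{l-1}$, which is no longer controlled by $\|\Be_w^{l-1}\|_{\BH^1(\Omega_h^{l-1})}$.

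The paper does not resolve this either. It simply writes both bounds with norms over $\Omega_h^{l-1}$, i.e.\ it tacitly assumes $\Omega^{l-1}\subset\Omega_h^{l-1}$. The definition of $\Ct_h^{l-1}$ does not guarantee this, since $\partial\Omega^{l-1}$ can deviate from $\Gamma_\eta^{l-1}$ by $\mathcal{O}(\eta)$ between control points.

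To match the paper, state this inclusion as an assumption rather than claiming the extension handles it. To genuinely close the gap, you have two options:
\begin{itemize}
\item guarantee the inclusion, e.g.\ by using an active mesh that covers an $\mathcal{O}(\eta)$-neighbourhood of $\Omega_\eta^{l-1}$; or
\item redefine \eqref{dis_map_domain} with an extension that is both $H^1$-stable on discrete functions and preserves the $\mathcal{O}(h^k)$ accuracy of $\hat{\Bw}^{l-1}$.
\end{itemize}
The second option is legitimate because that definition is used only in the analysis: the algorithm moves only control points, which lie in $\bar\Omega_h^{l-1}$, where all extensions agree.
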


\begin{proof}
By the definition of $\phibf_h^n$ and a Taylor expansion of the exact flow map,
for all $\Bx\in\Omega^0$,
\begin{align*}
\phibf_h^n(\Bx) &= \phibf_h^{n-1}(\Bx) + \tau\,\Bw_h^{n-1}\circ\phibf_h^{n-1}(\Bx),\\
\phibf^{n}(\Bx) &= \phibf^{n-1}(\Bx) + \tau\,\Bw^{n-1}\circ\phibf^{n-1}(\Bx) + \mathcal{O}(\tau^2).
\end{align*}
Subtracting the two relations and using $\Be_\phi^n=\phibf_h^n-\phibf^n$ yields
\[
\|\Be_\phi^n\|_{L^2(\Omega^0)}
\le \|\Be_\phi^{n-1}\|_{L^2(\Omega^0)}  + C\tau^2
+ \tau \|\Bw_h^{n-1}\circ\phibf_h^{n-1}-\Bw^{n-1}\circ\phibf^{\,n-1}\|_{\BL^2(\Omega^0)}.
\]
We divide the last term by inserting $\hat{\Bw}^{\,n-1}$:
\begin{align*}
&\|\Bw_h^{n-1}\circ \phibf_h^{n-1}-\Bw^{n-1}\circ \phibf^{\,n-1}\|_{\BL^2(\Omega^0)} \\
\le\;&\|\Be_w^{n-1}\circ \phibf_h^{n-1}\|_{\BL^2(\Omega^0)}
+ \|(\hat{\Bw}^{\,n-1}-\Bw^{n-1})\circ \phibf_h^{n-1}\|_{\BL^2(\Omega^0)}
+ C\|\Bw^{n-1}\|_{\BW^{1,\infty}(D)}\|\Be_\phi^{n-1}\|_{L^2(\Omega^0)}.
\end{align*}
By a change of variables $\Bx=\phibf_h^{n-1}(\Bx^0)$, $\Bx^0 \in \Omega^0$ and Lemma~\ref{flow_inver},
\[
\|\Be_w^{n-1}\circ \phibf_h^{n-1}\|_{\BL^2(\Omega^0)} \le C\|\Be_w^{n-1}\|_{\BL^2(\Omega_h^{n-1})},
\quad
\|(\hat{\Bw}^{\,n-1}-\Bw^{n-1})\circ \phibf_h^{n-1}\|_{\BL^2(\Omega^0)}
\le C\|\hat{\Bw}^{\,n-1}-\Bw^{n-1}\|_{\BL^2(\Omega_h^{n-1})}.
\]
Using the interpolation error estimate, we obtain $\|\hat{\Bw}^{\,n-1}-\Bw^{n-1}\|_{\BL^2(\Omega_h^{n-1})}\le Ch^{k+1}$, and thus
\[
\|\Be_\phi^n\|_{L^2(\Omega^0)}
\le \|\Be_\phi^{n-1}\|_{L^2(\Omega^0)} + C\tau^2
+ C\tau \|\Be_w^{n-1}\|_{\BL^2(\Omega_h^{n-1})}
+ C\tau \|\Be_\phi^{n-1}\|_{L^2(\Omega^0)}
+ C\tau h^{k+1}.
\]

Next, differentiating and applying the chain rule gives
\[
|\Be_\phi^n|_{H^1(\Omega^0)}
\le |\Be_\phi^{n-1}|_{H^1(\Omega^0)} + C\tau^2
+ \tau \|\nabla(\Bw_h^{n-1}\circ\phibf_h^{n-1})-\nabla(\Bw^{n-1}\circ\phibf^{\,n-1})\|_{\mathbb{L}^2(\Omega^0)}.
\]
We estimate the gradient term by inserting $\hat{\Bw}^{\,n-1}$ and applying the chain rule:
\begin{align*}
&\|\nabla(\Bw_h^{n-1}\circ\phibf_h^{n-1})
-\nabla(\Bw^{n-1}\circ\phibf^{\,n-1})\|_{\mathbb{L}^2(\Omega^0)} \\
=& \|((\nabla\Bw_h^{n-1})\circ\phibf_h^{n-1})\,\nabla\phibf_h^{n-1}
-((\nabla\Bw^{n-1})\circ\phibf^{\,n-1})\,\nabla\phibf^{\,n-1}\|_{\mathbb{L}^2(\Omega^0)} \\
\le&
\|((\nabla\Bw_h^{n-1}-\nabla\hat{\Bw}^{\,n-1})\circ\phibf_h^{n-1})\,\nabla\phibf_h^{n-1}\|_{\mathbb{L}^2(\Omega^0)} \\
&+
\|((\nabla\hat{\Bw}^{\,n-1}-\nabla\Bw^{n-1})\circ\phibf_h^{n-1})\,\nabla\phibf_h^{n-1}\|_{\mathbb{L}^2(\Omega^0)} \\
&+
\|((\nabla\Bw^{n-1})\circ\phibf_h^{n-1})\,\nabla\phibf_h^{n-1}
-((\nabla\Bw^{n-1})\circ\phibf^{\,n-1})\,\nabla\phibf^{\,n-1}\|_{\mathbb{L}^2(\Omega^0)} .
\end{align*}
Using the boundedness of $\nabla\phibf_h^{n-1}$ from Lemma~\ref{flow_inver},
the change of variables $\Bx=\phibf_h^{n-1}(\Bx^0)$, $\Bx^0 \in \Omega^0$,
and the Lipschitz continuity of $\nabla\Bw^{n-1}$, we obtain
\begin{align*}
&\|\nabla(\Bw_h^{n-1}\circ\phibf_h^{n-1})
-\nabla(\Bw^{n-1}\circ\phibf^{\,n-1})\|_{\mathbb{L}^2(\Omega^0)} \\
\le\;&
C\|\nabla\Be_w^{n-1}\circ\phibf_h^{n-1}\|_{\mathbb{L}^2(\Omega^0)}
+C\|\nabla(\hat{\Bw}^{\,n-1}-\Bw^{n-1})\circ\phibf_h^{n-1}\|_{\mathbb{L}^2(\Omega^0)}
+C\|\Bw^{n-1}\|_{\BW^{2,\infty}(D)}\|\Be_\phi^{n-1}\|_{H^1(\Omega^0)} \\
\le\;&
C\|\Be_w^{n-1}\|_{\BH^1(\Omega_h^{n-1})}
+ C h^{k}
+ C\|\Be_\phi^{n-1}\|_{H^1(\Omega^0)}.
\end{align*}
Combining the above bounds yields, for $1\le n\le m$,
\[
\|\Be_\phi^n\|_{H^1(\Omega^0)}
\le \|\Be_\phi^{n-1}\|_{H^1(\Omega^0)} + C\tau^2
+ C\tau \|\Be_w^{n-1}\|_{\BH^1(\Omega_h^{n-1})}
+ C\tau \|\Be_\phi^{n-1}\|_{H^1(\Omega^0)}
+ C\tau h^{k}.
\]
Summing from $l=1$ to $n$ and using $n\tau\le T$ gives
\[
\|\Be_\phi^n\|_{H^1(\Omega^0)}
\le C\tau
+ C\tau\sum_{l=1}^n \|\Be_w^{l-1}\|_{\BH^1(\Omega_h^{l-1})}
+ C\tau\sum_{l=1}^n \|\Be_\phi^{l-1}\|_{H^1(\Omega^0)}
+ C h^{k},
\]
which completes the proof.
\end{proof}

\subsection{Construction and estimation of the auxiliary map}
To estimate the defects induced by the mismatch between the discrete and exact domains, we introduce an auxiliary map that measures the distance between the two domains
\[
\Omega^n := \phibf_h^n(\Omega^0)
\quad\text{and}\quad
\Omega(t_n)=\phibf^n(\Omega^0).
\]
By Lemma~\ref{flow_inver}, the following definition is well posed
\begin{equation}\nonumber
\Be_v^n(\Bx) := \phibf^n\circ(\phibf_h^n)^{-1}(\Bx)-\Bx,
\qquad \forall\,\Bx\in\Omega^n.
\end{equation}
A key identity follows immediately from \eqref{eq:sta err}:
for any $\Bx^0\in\Omega^0$,
\begin{equation}\label{ev_ephi_identity}
\Be_v^n\big(\phibf_h^n(\Bx^0)\big)
= \phibf^n(\Bx^0)-\phibf_h^n(\Bx^0)
= -\,\Be_\phi^n(\Bx^0),
\qquad\text{hence}\qquad
\Be_v^n = -\,\Be_\phi^n\circ(\phibf_h^n)^{-1}\ \ \text{in }\Omega^n.
\end{equation}

\begin{Lemma}\label{lem:vtophi}
Under the assumptions of Theorem~\ref{Thm:4.1} and the property \eqref{assum},
for any $0\le n\le m-1$ it holds that
\begin{equation}\label{vtophi_H1}
\|\Be_v^n\|_{H^1(\Omega_h^n)} \le C\,\|\Be_\phi^n\|_{H^1(\Omega^0)},
\end{equation}
where $C>0$ is independent of $h,\tau$, and $n$.
\end{Lemma}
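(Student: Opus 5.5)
The plan is to combine the identity \eqref{ev_ephi_identity}, $\Be_v^n = -\Be_\phi^n\circ(\phibf_h^n)^{-1}$ on $\Omega^n$, with a change of variables $\Bx=\phibf_h^n(\Bx^0)$. The Jacobian bounds come from Lemma~\ref{flow_inver} and from \eqref{eq:bd of phi_h}.

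For the $L^2$ part we write
\[
\|\Be_v^n\|_{L^2(\Omega^n)}^2=\int_{\Omega^0}|\Be_\phi^n(\Bx^0)|^2\,|\det\nabla\phibf_h^n(\Bx^0)|\,\D x^0\le C\|\Be_\phi^n\|_{L^2(\Omega^0)}^2 .
\]
This uses $\|\nabla\phibf_h^n\|_{L^\infty}\le 1+\|\phibf\|_{L^\infty(0,T;W^{1,\infty})}$, which holds by \eqref{assum} since $n\le m-1$.

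For the gradient, the chain rule gives
\[
\nabla\Be_v^n=-\big((\nabla\Be_\phi^n)\circ(\phibf_h^n)^{-1}\big)\big((\nabla\phibf_h^n)^{-1}\circ(\phibf_h^n)^{-1}\big).
\]
Changing variables again, the inverse-Jacobian factor is bounded uniformly by Lemma~\ref{flow_inver}. The determinant factor is bounded by the $W^{1,\infty}$ bound on $\phibf_h^n$. Together these give $|\Be_v^n|_{H^1(\Omega^n)}\le C|\Be_\phi^n|_{H^1(\Omega^0)}$. One remark on regularity: $\phibf_h^n$ is only piecewise smooth, being a composition of finite element updates, but it is Lipschitz and bi-Lipschitz. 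The change of variables and chain rule therefore hold a.e., which suffices for $H^1$.

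The main obstacle is that the norm in \eqref{vtophi_H1} is taken over the fictitious domain $\Omega_h^n$, whereas the identity only defines $\Be_v^n$ on $\Omega^n=\phibf_h^n(\Omega^0)$. We need the definition to make sense on the strip $\Omega_h^n\setminus\Omega^n$, whose width is $O(h)$ (plus the spline discrepancy). I would handle this in one of two ways.

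\begin{enumerate}
\item Extend $\phibf^n$ and $\phibf_h^n$ past $\Omega^0$. Apply the same recursions \eqref{dis_map_domain} on a slightly larger fixed neighbourhood $\widetilde\Omega^0\supset\Omega^0$ whose image under $\phibf_h^n$ covers $\Omega_h^n$. This is possible because $\Bw_h^{n-1}$ has been extended to $D$, and $\phibf^n$ can be extended by the exact flow of a smooth extension of $\Bw$. The argument above then runs verbatim on $\widetilde\Omega^0$.
\item Alternatively, apply a bounded Sobolev (Stein) extension operator $E$ to $\Be_\phi^n$ on the Lipschitz domain $\Omega^0$, with $\|E\Be_\phi^n\|_{H^1(\widetilde\Omega^0)}\le C\|\Be_\phi^n\|_{H^1(\Omega^0)}$. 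Its extension constant is uniform because $\Omega^0$ is fixed.
\end{enumerate}

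In either case the inverse-Jacobian bound of Lemma~\ref{flow_inver} carries over to the enlarged domain, since its proof only used \eqref{eq:bd of wh} on $D$. The constant $C$ therefore depends only on $T$ and the exact-solution norms, not on $h$, $\tau$ or $n$.
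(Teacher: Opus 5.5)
Your core argument is exactly the paper's proof: the identity \eqref{ev_ephi_identity}, the change of variables $\Bx=\phibf_h^n(\Bx^0)$, the chain rule, and the Jacobian bounds. You are in fact more precise than the paper in attributing the determinant bound to \eqref{eq:bd of phi_h} rather than to Lemma~\ref{flow_inver}. Note, however, that the paper only computes $\|\Be_v^n\|_{L^2(\Omega^n)}$ and $|\Be_v^n|_{H^1(\Omega^n)}$, i.e.\ norms on the set where $\Be_v^n$ is actually defined. That is also all that is used afterwards: in \eqref{eq:I3u_transport}, \eqref{eq:Iu_final} and \eqref{eq:III1w_transport} the quantity bounded is $\|\Be_v^n\|_{H^1(\Omega^n)}$. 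So the $\Omega_h^n$ in the statement should be read as $\Omega^n$, and your extension step is unnecessary.

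The extension step is also not sound as written. In your first option the right-hand side becomes $\|\Be_\phi^n\|_{H^1(\widetilde\Omega^0)}$, which Lemma~\ref{lem: ephi} does not control. Outside $\Omega_h^{j}$ the extended $\Bw_h^{j}$ no longer approximates $\Bw^{j}$, so $\Be_\phi^n$ need not be small on $\widetilde\Omega^0\setminus\Omega^0$.

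More seriously, both options need Lemma~\ref{flow_inver} on the enlarged set. Your justification that its proof ``only used \eqref{eq:bd of wh} on $D$'' is incorrect, because \eqref{eq:bd of wh} is a bound on $\Omega_h^n$ only. With the paper's extension (nodal values set to zero outside $\bar\Omega_h^j$), $\nabla\Bw_h^j$ is generically of size $h^{-1}$ in the layer of elements adjacent to $\partial\Omega_h^j$, since $\Bw_h^j$ is $O(1)$ at the boundary nodes. Material points of $\widetilde\Omega^0\setminus\Omega^0$ can sit in that layer, because the margin of $\Omega_h^j$ beyond $\Gamma_\eta^j$ can be arbitrarily thin. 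The product estimate in Lemma~\ref{flow_inver} then only gives something like $(1+C\tau/h)^{T/\tau}$, which is not uniform in $h$.

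If you really want the $\Omega_h^n$ version, a more natural route is a Sobolev extension of $\Be_v^n$ from $\Omega^n$ with a constant uniform in $n$ and $h$, rather than rerunning the discrete flow on a larger set. That would require separately showing that $\Omega^n$ has a uniformly controlled Lipschitz character. The lemma as it is used downstream does not need any of this.
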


\begin{proof}
By \eqref{ev_ephi_identity} and the change of variables $\Bx=\phibf_h^n(\Bx^0)$,
\begin{align*}
\|\Be_v^n\|_{L^2(\Omega^n)}^2
&=\int_{\Omega^n}|\Be_v^n(\Bx)|^2\,\D x
=\int_{\Omega^0}\big|\Be_v^n(\phibf_h^n(\Bx^0))\big|^2\,|\det\nabla\phibf_h^n(\Bx^0)|\,\D x \\
&=\int_{\Omega^0}|\Be_\phi^n(\Bx^0)|^2\,|\det\nabla\phibf_h^n(\Bx^0)|\,\D x
\le C\,\|\Be_\phi^n\|_{L^2(\Omega^0)}^2,
\end{align*}
where we used the uniform bound on $|\det\nabla\phibf_h^n|$ from Lemma~\ref{flow_inver}.
Moreover, differentiating \eqref{ev_ephi_identity} gives, for $\Bx=\phibf_h^n(\Bx^0)$,
\[
\nabla\Be_v^n(\Bx)\,\nabla\phibf_h^n(\Bx^0) = -\,\nabla\Be_\phi^n(\Bx^0),
\qquad\text{thus}\qquad
\nabla\Be_v^n(\Bx) = -\,\nabla\Be_\phi^n(\Bx^0)\,(\nabla\phibf_h^n(\Bx^0))^{-1}.
\]
Hence, using again the change of variables and Lemma~\ref{flow_inver},
\begin{align*}
|\Be_v^n|_{H^1(\Omega^n)}^2
&=\int_{\Omega^n}|\nabla\Be_v^n(\Bx)|^2\,\D x \\
&=\int_{\Omega^0}\Big|\nabla\Be_\phi^n(\Bx^0)\,(\nabla\phibf_h^n(\Bx^0))^{-1}\Big|^2\,|\det\nabla\phibf_h^n(\Bx^0)|\,\D x
\le C\,|\Be_\phi^n|_{H^1(\Omega^0)}^2.
\end{align*}
Combining the estimates under $L^2$ norm and $H^1$ semi-norm yields \eqref{vtophi_H1}.
\end{proof}

With the auxiliary map $\Be_v^n$, we define a one-parameter family of transformations
\begin{equation}\label{def:Ttheta}
\BT_\theta^n(\Bx):=\Bx+\theta\,\Be_v^n(\Bx),
\qquad \Bx\in\Omega^n,\ \ \theta\in[0,1],
\end{equation}
and the associated deformed domains
\[
\Omega_\theta^n:=\BT_\theta^n(\Omega^n),
\qquad \Omega_0^n=\Omega^n,\qquad \Omega_1^n=\Omega(t_n).
\]

\begin{Lemma}\label{lem:est T_theta}
Under the assumptions of Theorem~\ref{Thm:4.1} and property \eqref{assum},
the mapping $\BT_\theta^n:\Omega^n\to\Omega_\theta^n$ is invertible for every $0\le n\le m-1$ and $\theta\in[0,1]$.
Moreover, $\BT_\theta^n$ is bi-Lipschitz on $\Omega^n$, and its Jacobian matrix
$\bbJ_\theta^n:=\nabla\BT_\theta^n$ exists a.e.\ in $\Omega^n$ with the uniform bounds
\[
\|\bbJ_\theta^n\|_{L^\infty(\Omega^n)}+\|(\bbJ_\theta^n)^{-1}\|_{L^\infty(\Omega^n)}\le C,
\]
where $C>0$ is independent of $n$ and $\theta$.

\end{Lemma}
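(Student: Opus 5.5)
The plan is to write $\BT_\theta^n$ as a convex combination of the identity and a map we already control. Since $\Be_v^n=\phibf^n\circ(\phibf_h^n)^{-1}-\mathrm{id}$, we have
\[
\BT_\theta^n=(1-\theta)\,\mathrm{id}+\theta\,\phibf^n\circ(\phibf_h^n)^{-1}
=\big((1-\theta)\phibf_h^n+\theta\phibf^n\big)\circ(\phibf_h^n)^{-1}
=\big(\phibf_h^n-\theta\Be_\phi^n\big)\circ(\phibf_h^n)^{-1}.
\]
So it suffices to treat the interpolated flow map $\phibf_\theta^n:=\phibf^n+(1-\theta)\Be_\phi^n$ on $\Omega^0$. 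Lemma~\ref{flow_inver} already handles $(\phibf_h^n)^{-1}$, giving invertibility and bounds on $\nabla\phibf_h^n$ and its inverse.

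\textbf{Jacobian bounds.} By the chain rule,
\[
\bbJ_\theta^n\circ\phibf_h^n=\nabla\phibf_\theta^n\,(\nabla\phibf_h^n)^{-1}.
\]
The upper bound on $\bbJ_\theta^n$ then follows from two facts. First, $\|\nabla\phibf_\theta^n\|_{L^\infty}\le\|\phibf\|_{W^{1,\infty}}+1$ by \eqref{assum}. Second, $(\nabla\phibf_h^n)^{-1}$ is bounded by Lemma~\ref{flow_inver}.

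For the inverse, write $\nabla\phibf_\theta^n=\nabla\phibf^n\big(I+(1-\theta)(\nabla\phibf^n)^{-1}\nabla\Be_\phi^n\big)$. By \eqref{assum}, $\|(\nabla\phibf^n)^{-1}\nabla\Be_\phi^n\|\le C_\phi\cdot\frac{1}{2C_\phi}=\frac12$. A Neumann series then gives $\|(\nabla\phibf_\theta^n)^{-1}\|\le 2C_\phi$. Hence $(\bbJ_\theta^n)^{-1}\circ\phibf_h^n=\nabla\phibf_h^n(\nabla\phibf_\theta^n)^{-1}$ is bounded, using \eqref{eq:bd of phi_h}. This is exactly why the constant $\frac{1}{2C_\phi}$ appears in \eqref{assum}. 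All constants depend only on $T$ and the exact solution, not on $n$ or $\theta$. Since $\phibf_h^n$ is piecewise polynomial and only Lipschitz, everything holds a.e.

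\textbf{Global invertibility.} This is the main obstacle, because a locally invertible Jacobian does not by itself give injectivity. I would show $\phibf_\theta^n$ is injective on $\Omega^0$ by exploiting that it is a small Lipschitz perturbation of the bi-Lipschitz diffeomorphism $\phibf^n$. For $\Bx,\By\in\Omega^0$,
\[
|\phibf_\theta^n(\Bx)-\phibf_\theta^n(\By)|\ge|\phibf^n(\Bx)-\phibf^n(\By)|-\|\nabla\Be_\phi^n\|_{L^\infty}\,|\Bx-\By|.
\]
The first term is at least $c|\Bx-\By|$, using Lipschitz continuity of $(\phibf^n)^{-1}$ on $\Omega(t_n)$. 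The delicate point is that the segment between $\Bx$ and $\By$ may leave $\Omega^0$ if $\Omega^0$ is not convex. To handle this, I would apply the mean-value estimate to $\Be_\phi^n$ composed with $(\phibf^n)^{-1}$ and argue via a path inside $\Omega^0$, or extend $\Be_\phi^n$ with comparable $W^{1,\infty}$ norm. Either route gives $\|\Be_\phi^n\|_{\mathrm{Lip}}\,|\Bx-\By|\le\frac{1}{2C_\phi}|\Bx-\By|$ up to a geometric constant.

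An alternative, which I would try first, gives the lower Lipschitz bound directly. Write $\phibf_\theta^n\circ(\phibf^n)^{-1}=\mathrm{id}+(1-\theta)\Be_\phi^n\circ(\phibf^n)^{-1}$ on $\Omega(t_n)$, a $C^2$ domain, where the perturbation has gradient norm at most $\frac12$. Injectivity follows from the contraction argument: if two points had the same image, their difference would satisfy $|\Bx-\By|\le\frac12|\Bx-\By|$, modulo the quasi-convexity constant of the $C^2$ domain.

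Composing the resulting bi-Lipschitz map with $(\phibf_h^n)^{-1}$ then gives that $\BT_\theta^n$ is a bi-Lipschitz bijection onto $\Omega_\theta^n$ with the stated uniform bounds.
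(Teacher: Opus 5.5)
Your Jacobian estimates are correct, and they are sharper than what the paper's proof actually delivers. The paper perturbs around the identity, $\bbJ_\theta^n=\bbI-\theta\,\nabla\Be_\phi^n(\nabla\phibf_h^n)^{-1}$, bounds the perturbation by $1$, and gets only $\|(\bbJ_\theta^n)^{-1}\|_{L^\infty(\Omega^n)}\le(1-\theta)^{-1}$, treating $\theta=1$ separately. That bound is not uniform as $\theta\to1^-$. You instead factor through $\phibf_\theta^n=\phibf^n+(1-\theta)\Be_\phi^n$, which is equivalent to $\bbJ_\theta^n\circ\phibf_h^n=(\bbI+(1-\theta)\bbA)(\bbI+\bbA)^{-1}$ with $\bbA=\nabla\Be_\phi^n(\nabla\phibf^n)^{-1}$ and $\|\bbA\|\le\frac12$. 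This gives a $\theta$-independent bound on $(\bbJ_\theta^n)^{-1}$, which is what the statement claims.

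The gap is global injectivity. Put $\boldsymbol{g}:=\Be_\phi^n\circ(\phibf^n)^{-1}$ and $\boldsymbol{F}_\theta:=\mathrm{id}+(1-\theta)\boldsymbol{g}$ on $\Omega(t_n)$. Then $\BT_\theta^n=\boldsymbol{F}_\theta\circ\BT_1^n$, and $\|\nabla\boldsymbol{g}\|_{L^\infty}\le\frac12$ by \eqref{assum}.

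On a non-convex domain this pointwise bound only gives $|\boldsymbol{g}(\By_1)-\boldsymbol{g}(\By_2)|\le\frac{\kappa}{2}|\By_1-\By_2|$, where $\kappa\ge1$ is a quasi-convexity constant of $\Omega(t_n)$. Your contraction argument therefore closes only if $\kappa<2$. Your first route has the same problem: it needs the product of two geometric constants to be below $2$. Since \eqref{assum} fixes the threshold at exactly $\frac{1}{2C_\phi}$, there is no slack to absorb ``a geometric constant'', and $\kappa$ is unbounded over $C^2$ domains.

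The loss is real. Bend a thin strip into a spiral so that two straight pieces $A$ and $B$ of width $w$ face each other across a gap $\delta$, but are joined only through a long arc. Let $\boldsymbol{g}$ be the shift $(0,2w+2\delta)$ on $A$, zero on $B$, interpolated slowly along the arc. Then $\mathrm{id}+\boldsymbol{g}$ is injective and $\|\boldsymbol{g}\|_{W^{1,\infty}}\to0$ as $w,\delta\to0$, yet $\mathrm{id}+\tfrac12\boldsymbol{g}$ maps $A$ onto $B$. So no argument from \eqref{assum} alone can give injectivity for all $\theta$. That includes local invertibility combined with a continuation or degree argument in $\theta$.

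The repair is to let the threshold depend on the geometry. Require $\|\Be_\phi^n\|_{W^{1,\infty}(\Omega^0)}\le\varepsilon_0$ with $\varepsilon_0\le\min\{1,(2\kappa C_\phi)^{-1}\}$, where $\kappa$ is a quasi-convexity constant valid for all $\Omega(t)$, $t\in[0,T]$. Then $\frac12|\By_1-\By_2|\le|\boldsymbol{F}_\theta(\By_1)-\boldsymbol{F}_\theta(\By_2)|\le\frac32|\By_1-\By_2|$ for every $\theta$. Composing with the bi-Lipschitz map $\BT_1^n$ gives invertibility and uniform bi-Lipschitz bounds at once.

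This costs nothing in the induction: its closing step gives $\|\Be_\phi^n\|_{W^{1,\infty}(\Omega^0)}\le Ch^{-1}(\tau+\mathrm{E}_h)$, which is below any fixed $\varepsilon_0$ for small $h$. Note also that the paper's inequality $(1-\theta)|\Bx_1-\Bx_2|\le|\BT_\theta^n(\Bx_1)-\BT_\theta^n(\Bx_2)|$ integrates $\bbJ_\theta^n$ along the segment $[\Bx_1,\Bx_2]$, so it tacitly assumes $\Omega^n$ is convex. You correctly located that same hole, but your proposal as written does not close it.
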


\begin{proof}
Fix $\Bx\in\Omega^n$ and set $\Bx^0:=(\phibf_h^n)^{-1}(\Bx)\in\Omega^0$.
By \eqref{def:Ttheta} and \eqref{ev_ephi_identity}, we have
\[
\BT_\theta^n(\Bx)=\Bx-\theta\,\Be_\phi^n(\Bx^0),
\qquad
\bbJ_\theta^n(\Bx)=\nabla\BT_\theta^n(\Bx)
= \bbI -\theta\,\nabla\Be_\phi^n(\Bx^0)\,(\nabla\phibf_h^n(\Bx^0))^{-1}
\quad\text{a.e.\ in }\Omega^n.
\]
Using $\nabla\phibf_h^n=\nabla\phibf^n+\nabla\Be_\phi^n$, we factorize
\[
\nabla\phibf_h^n=(\bbI +\bbA_{\theta})\nabla\phibf^n,
\qquad
\bbA_{\theta}:=\nabla\Be_\phi^n(\nabla\phibf^n)^{-1}.
\]
Hence, whenever $\bbI+\bbA_{\theta}$ is invertible,
\[
(\nabla\phibf_h^n)^{-1}=(\nabla\phibf^n)^{-1}(\bbI+\bbA_{\theta})^{-1}.
\]
By \eqref{assum} and the definition of $C_\phi$, $\|\bbA_{\theta}\|_{L^\infty(\Omega^0)}\le \frac12$,
hence $(\bbI+\bbA_{\theta})^{-1}$ exists and $\|(\bbI+\bbA_{\theta})^{-1}\|_{L^\infty(\Omega^0)}\le 2$. Therefore,
\[
\|\nabla\Be_\phi^n(\nabla\phibf_h^n)^{-1}\|_{L^\infty(\Omega^0)}
=\|\bbA_{\theta}(\bbI+\bbA_{\theta})^{-1}\|_{L^\infty(\Omega^0)}
\le \frac{\|\bbA_{\theta}\|_{L^\infty(\Omega^0)}}{1-\|\bbA_{\theta}\|_{L^\infty(\Omega^0)}}\le 1.
\]
Consequently, for $\theta\in[0,1)$,
\[
\|\bbJ_\theta^n-\bbI\|_{L^\infty(\Omega^n)}
=\theta\,\|\nabla\Be_\phi^n(\nabla\phibf_h^n)^{-1}\|_{L^\infty(\Omega^0)}
\le \theta<1,
\]
which implies that $\bbJ_\theta^n$ is invertible a.e.\ and
$\|(\bbJ_\theta^n)^{-1}\|_{L^\infty(\Omega^n)}\le (1-\theta)^{-1}$.
Moreover, for any $\Bx_1,\Bx_2\in\Omega^n$,
\[
(1-\theta)|\Bx_1-\Bx_2|\le |\BT_\theta^n(\Bx_1)-\BT_\theta^n(\Bx_2)|
\le (1+\theta)|\Bx_1-\Bx_2|,
\]
so $\BT_\theta^n$ is bi-Lipschitz and hence globally invertible.
For $\theta=1$, we have $\BT_1^n=\phibf^n\circ(\phibf_h^n)^{-1}$, which is bi-Lipschitz according to 
Lemma~\ref{flow_inver} and the regularity of $\phibf^n$. The stated uniform bounds follow.
\end{proof}

For any $\By\in\Omega_\theta^n$, let $\Bx=(\BT_\theta^n)^{-1}(\By)$ and define the associated velocity field
\begin{equation}\label{eq:DT_theta}
\Be_{v,\theta}^n(\By):=\frac{\D}{\D\theta}\BT_\theta^n(\Bx)
=\Be_v^n(\Bx)
=\Be_v^n\circ(\BT_\theta^n)^{-1}(\By),
\qquad \By\in\Omega_\theta^n.
\end{equation}
By the change of variables and Lemma~\ref{lem:est T_theta}, we obtain
\begin{equation}
\begin{aligned} \label{eq:ev_H1}
\|\Be_{v,\theta}^n\|_{L^2(\Omega_\theta^n)}^2
&=\int_{\Omega^n}|\Be_v^n(\Bx)|^2\,|\det(\bbJ_{\theta}^n(\Bx))|\,\D x
\le C\,\|\Be_v^n\|_{L^2(\Omega^n)}^2, \\ 
|\Be_{v,\theta}^n|_{H^1(\Omega_\theta^n)}^2
&=\int_{\Omega^n}\big|\nabla\Be_v^n(\Bx)\,(\bbJ_\theta^n(\Bx))^{-1}\big|^2\,
|\det(\bbJ_\theta^n(\Bx))|\,\D x
\le C\,|\Be_v^n|_{H^1(\Omega^n)}^2. 
\end{aligned}
\end{equation}

\subsection{Consistency estimate}\label{consistency}
To define $\hat{u}^n_{\theta} \in H^1(\Omega_{\theta}^n)$, let $\hat{u}^n_0:= \hat{u}^n$, i.e., the interpolation of the exact solution $u^n$, and define
\begin{equation*}
    \hat{u}^n_{\theta} \left(\By \right) := u_h^n(\By) - \big (u_h^n\circ (\BT_{\theta}^n)^{-1}(\By) - \hat{u}_0^n\circ (\BT_{\theta}^n)^{-1}(\By)\big)=u_h^n(\By)-\big(u_h^n(\Bx)-\hat{u}_0^n(\Bx)\big) \quad \By \in \Omega_{\theta}^n.
\end{equation*}
The error between $u_h^n$ and $\hat{u}_{\theta}^n$ is given by 
\begin{equation*}
    e_{u, \theta}^n(\By): =  u_h^n(\By) - \hat{u}^n_{\theta}(\By) = u_h^n(\Bx) - \hat{u}_0^n(\Bx) = e_{u, 0}^n(\Bx)=e_u^n(\Bx) \quad \By \in \Omega_{\theta}^n,
\end{equation*}
implying its material derivative vanishes. Specifically, at $\By=T_{\theta}^n(\Bx) \in \Omega_{\theta}^n$, we have
\begin{equation*}
     \partial^{\bullet} e^n_{u,\theta}(\By) = \lim_{\alpha\to 0^{+}}
     \frac{e_{u,\theta}^n(\Bx + (\theta+\alpha)\Be_v^n(\Bx)) - e_{u,\theta}^n(\Bx+\theta \Be_v^n(\Bx))}{\alpha} =0.
\end{equation*}
Moreover, the material derivative of the gradient satisfies
\begin{equation}\label{grad_material}
    \partial^{\bullet} \nabla e_{u, \theta}^n = \nabla \partial^{\bullet}  e_{u, \theta}^n - (\nabla \Be_{v, \theta}^n)^{\top} \nabla e_{u, \theta}^n = - (\nabla \Be_{v, \theta}^n)^{\top} \nabla e_{u, \theta}^n.
\end{equation}

\subsubsection{Estimate for $d_u^n$}\label{sec:estimate for du}
Recall that $e_{u, 0}^n(\Bx)=e_u^n(\Bx) \in V(k,\Ct_h^n)$ and $e_{u, \theta}^n(\By) \in H^1(\Omega_{\theta}^n)$. The weak formulation of the exact solution is to find $u^n\in H^1(\Omega(t_n))$ such that 
\begin{equation}
\begin{aligned}\label{4.12}
    \int_{\Omega(t_n)} \nabla u^n \cdot \nabla \chi_u +u^n \chi_u \mathrm{d} x = \int_{\Omega(t_n)} f \chi_u \mathrm{d} x\quad \forall\chi_u\in H^1(\Omega(t_n)).
\end{aligned}
\end{equation}
Assuming that $u^n$ is smooth enough, we have $\mathscr{J}_h^n(u^n,e_{u,0}^n) = 0$. Since $e_{u,1}^n\in H^1(\Omega(t_n))$, taking $\chi_u = e_{u,1}^n$ in \eqref{4.12}, it follows from \eqref{eq:hat u} that
\begin{equation}\label{du}
\begin{aligned}
    \int_{\Omega_{\eta}^n} d_u^n e_{u,0}^n \mathrm{d}x 
    &= \int_{\Omega_{\eta}^n} \left( \nabla \hat{u}^n \cdot \nabla e_{u,0}^n + \hat{u}^n e_{u,0}^n - f e_{u,0}^n \right) \mathrm{d}x + \mathscr{J}_h^n(\hat{u}^n, e_{u,0}^n) \\
    &= \underbrace{\bigg( \int_{\Omega_{\eta}^n} \nabla \hat{u}^n \cdot \nabla e_{u,0}^n \mathrm{d}x - \int_{\Omega(t_n)} \nabla u^n \cdot \nabla e_{u,1}^n \mathrm{d}x \bigg)}_{\text{I}^{u}} 
   + \underbrace{\bigg( \int_{\Omega_{\eta}^n} \hat{u}^n e_{u,0}^n \mathrm{d}x - \int_{\Omega(t_n)} u^n e_{u,1}^n \mathrm{d}x \bigg)}_{\text{II}^u} \\
    &\quad + \underbrace{\bigg( \int_{\Omega(t_n)} f e_{u,1}^n \mathrm{d}x - \int_{\Omega_{\eta}^n} f e_{u,0}^n \mathrm{d}x \bigg)}_{\text{III}^u} 
    \quad + \underbrace{\bigg( \mathscr{J}_h^n(\hat{u}^n, e_{u,0}^n) - \mathscr{J}_h^n(u^n, e_{u,0}^n) \bigg)}_{\text{IV}^u}.
\end{aligned}
\end{equation}

\paragraph{\textbf{Estimate of $\mathrm{I}^u$.}}
We decompose
\begin{equation}\label{eq:Iu_split}
\begin{aligned}
\mathrm{I}^u
&=\underbrace{\Big(\int_{\Omega_\eta^n}\nabla \hat u^n\cdot \nabla e_{u,0}^n\,\D x
-\int_{\Omega^n}\nabla \hat u^n\cdot \nabla e_{u,0}^n\,\D x\Big)}_{\mathrm{I}_1^u}
+\underbrace{\int_{\Omega^n}\nabla(\hat u^n-u^n)\cdot \nabla e_{u,0}^n\,\D x}_{\mathrm{I}_2^u}\\
&\quad+\underbrace{\Big(\int_{\Omega^n}\nabla u^n\cdot \nabla e_{u,0}^n\,\D x
-\int_{\Omega(t_n)}\nabla u^n\cdot \nabla e_{u,1}^n\,\D x\Big)}_{\mathrm{I}_3^u}.
\end{aligned}
\end{equation}
By the interpolation estimate, $\|\nabla(\hat u^n-u^n)\|_{L^2(\Omega^n)}\le Ch^k$, hence
\begin{equation}\label{eq:I2u_bd}\nonumber
|\mathrm{I}_2^u|\le Ch^k\,\|e_{u,0}^n\|_{H^1(\Omega_h^n)}.
\end{equation}

To bound $\mathrm{I}_1^u$, we use the following geometric strip estimate.

\begin{Lemma}\label{Lemma:4.4}
For any $v_h\in V(k,\Ct_h^n)$ and $v\in H^1(D)$, there holds
\begin{equation}\label{eq:strip_est}
\|v_h\|_{L^2(\Omega_\eta^n\setminus\Omega^n)}^2+\|v_h\|_{L^2(\Omega^n\setminus\Omega_\eta^n)}^2
\le C h^{-1}\eta\,\|v_h\|_{L^2(\Omega_h^n)}^2,
\qquad
\|v\|_{L^2(\Omega_\eta^n\setminus\Omega^n)}^2+\|v\|_{L^2(\Omega^n\setminus\Omega_\eta^n)}^2
\le C\eta\,\|v\|_{H^1(D)}^2.
\end{equation}
\end{Lemma}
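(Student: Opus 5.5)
The plan is to first show that the symmetric difference $S^n:=(\Omega_\eta^n\setminus\Omega^n)\cup(\Omega^n\setminus\Omega_\eta^n)$ is a thin strip around $\Gamma_\eta^n$ of width $\mathcal{O}(\eta)$. After that, both inequalities follow from standard strip and trace arguments.

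\textbf{Geometric step.} Recall that $\partial\Omega^n=\phibf_h^n(\Gamma^0)$ and $\Gamma_\eta^n$ coincide at the control points $\Bp_j^n=\phibf_h^n(\Bp_j^0)$. Consider a sub-arc of $\Gamma^0$ between two consecutive control points $\Bp_j^0,\Bp_{j+1}^0$. Its length is $\eta$, and $\phibf_h^n$ is Lipschitz with a uniform constant by \eqref{eq:bd of phi_h}. Hence its image has length at most $C\eta$, so every point of that image lies within distance $C\eta$ of $\Bp_j^n$. The spline arc $\BZ_\eta^n|_{[L_j^n,L_{j+1}^n]}$ has length comparable to $|\Bp_{j+1}^n-\Bp_j^n|\le C\eta$. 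This holds under the quasi-uniform spacing of the control points assumed in the Remark, together with the standard stability of cubic spline interpolation. Consequently every point of $S^n$ lies within distance $C\eta$ of $\Gamma_\eta^n$, i.e.
\[
S^n\subset U_{C\eta}:=\{\Bx:\ \mathrm{dist}(\Bx,\Gamma_\eta^n)\le C\eta\}.
\]
Because $\Gamma_\eta^n$ is a $C^2$ curve of bounded length and bounded curvature (it is close to the smooth $\Gamma(t_n)$), the tubular neighbourhood $U_{C\eta}$ has measure at most $C\eta$. Moreover, once $\eta\ll h$ (true since $\eta=\mathcal{O}(h^{k+1/2})$), $U_{C\eta}$ is covered by the boundary cells $\Ct_{h,B}^n$ and their neighbours, so $S^n\subset\Omega_h^n$.

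\textbf{Estimate for $v\in H^1(D)$.} Use a local tubular coordinate $(s,r)$ with $\Bx=\BZ_\eta^n(s)+r\Bn(s)$ and $|r|\le C\eta$. A one-dimensional trace argument gives, for each fixed $s$,
\[
\int_{-C\eta}^{C\eta}|v|^2\,dr\le C\eta\,|v(s,r_0)|^2+C\eta^2\int|\partial_r v|^2\,dr .
\]
Averaging over $r_0$ in a fixed-width layer, or equivalently using the trace inequality $\|v\|_{L^2(\Gamma_r)}\le C\|v\|_{H^1(D)}$ uniformly for the parallel curves $\Gamma_r$, and integrating over $r$, we get $\|v\|_{L^2(U_{C\eta})}^2\le C\eta\|v\|_{H^1(D)}^2$. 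This is the second inequality.

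\textbf{Estimate for $v_h$ (main obstacle).} For the discrete estimate I would work cellwise. For $K\in\Ct_{h,B}^n$, the set $K\cap U_{C\eta}$ has measure at most $C\eta h$, since the curve has length at most $Ch$ inside $K$ by bounded curvature. Polynomial inverse estimates give $\|v_h\|_{L^\infty(K)}^2\le Ch^{-2}\|v_h\|_{L^2(K)}^2$. Combining the two,
\[
\|v_h\|^2_{L^2(K\cap S^n)}\le C\eta h\cdot h^{-2}\|v_h\|_{L^2(K)}^2 = Ch^{-1}\eta\,\|v_h\|_{L^2(K)}^2 .
\]
Summing over the boundedly many cells that meet $U_{C\eta}$, all of which lie in $\Omega_h^n$, yields the first inequality. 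The delicate points are:
\begin{itemize}
\item controlling the length of $\Gamma_\eta^n\cap K$ uniformly, which needs the curvature of the spline to be bounded (inherited from $C^2$ closeness to $\Gamma(t_n)$ for small $\eta$);
\item justifying that $U_{C\eta}\subset\Omega_h^n$.
\end{itemize}
For the latter I would use $\eta\ll h$. Any part of $\Omega^n\setminus\Omega_\eta^n$ then lies in cells adjacent to cells cut by $\Gamma_\eta^n$, and, if necessary, $v_h$ is evaluated through its polynomial extension on such cells. Inverse estimates remain valid for that extension with a uniform constant, which is enough to close the argument.
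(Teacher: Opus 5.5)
Your route is the paper's route. You establish closeness of $\partial\Omega^n=\phibf_h^n(\Gamma^0)$ and $\Gamma_\eta^n$ at scale $\eta$ from the matching at the control points, the Lipschitz bound \eqref{eq:bd of phi_h} and spline interpolation. You then combine a strip of area $\mathcal{O}(h\eta)$ in each cell with $\N{v_h}_{L^\infty(K)}^2\le Ch^{-2}\N{v_h}_{L^2(K)}^2$. For $v\in H^1(D)$ you use a strip estimate, which the paper imports from Nicaise and you derive by a one-dimensional trace argument. You are more careful than the paper in checking both directions of the closeness: the paper only bounds $\mathrm{dist}(\phibf_h^n(\Bx),\Gamma_\eta^n)$, which by itself does not confine the symmetric difference. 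You also notice that $\Omega^n\setminus\Omega_\eta^n$ can leave $\Omega_h^n$. So drop your first assertion $S^n\subset\Omega_h^n$ (a neighbour of a cut cell need not meet $\Omega_\eta^n$), and keep the extension of $v_h$, which is what the paper does.

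The step that does not hold as justified is the claim that $\Gamma_\eta^n$ has bounded curvature because it is $C^2$-close to $\Gamma(t_n)$. Nothing provides this. \eqref{assum} controls $\Be_\phi^n$ only in $W^{1,\infty}$, so the control points are only known to lie within $\N{\Be_\phi^n}_{L^\infty}$ of $\Gamma(t_n)$, and nothing makes that $\mathcal{O}(\eta^2)$. Yet the spline's curvature is governed by $(\Bp_{j+1}^n-2\Bp_j^n+\Bp_{j-1}^n)/\eta^2$. Worse, $\Bw_h^j$ is only $C^0$, so the curve $\phibf_h^n(\Gamma^0)$ has tangent kinks wherever some $\phibf_h^j(\Gamma^0)$ crossed a mesh edge. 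Since $\eta\ll h$, the control points resolve these kinks, and near them the spline curvature is of order $(\text{kink angle})/\eta$, which grows as $\eta\to 0$.

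You use the curvature bound twice: for $|K\cap U_{C\eta}|\le Ch\eta$, and for normal coordinates of fixed width about $\Gamma_\eta^n$. Both can be replaced by what \eqref{assum} does give. For unit tangents $\bm t$ of $\Gamma^0$ one has $|\nabla\Be_\phi^n\bm t|\le\frac{1}{2C_\phi}\le\frac12|\nabla\phibf^n\bm t|$. So the tangent of $\partial\Omega^n$ stays within angle $\pi/6$ of the corresponding tangent of the $C^2$ curve $\Gamma(t_n)$. Hence $\partial\Omega^n$ is a union of boundedly many Lipschitz graphs of bounded slope, along which consecutive control points have projections at least $c\eta$ apart.

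\emph{Cell-wise area.} Each spline arc has length at most $C\eta$, so $U_{C\eta}\subset\bigcup_j B(\Bp_j^n,C'\eta)$. Only $\mathcal{O}(h/\eta)$ control points lie within $C'\eta$ of a given cell, which gives $|K\cap U_{C\eta}|\le Ch\eta$.

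\emph{$H^1$ estimate.} Likewise $U_{C\eta}$ lies in layers of width $C\eta$ about these graphs, measured orthogonally to their base lines. Run your one-dimensional argument along those lines (a shear with unit Jacobian, applied to an $H^1$ extension of $v$). This gives $\N{v}_{L^2(S^n)}^2\le C\eta\N{v}_{H^1(D)}^2$ with no curvature or reach assumption.

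With these replacements your proof is complete at (at least) the paper's level of rigor.
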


\begin{proof}
From the definition in \eqref{dis_map_domain}, it follows that $\partial \Omega^n = \phibf_h^n(\Gamma^0)$ 
coincides with the control points $\{\Bp_j^n\}$ of $\Omega_{\eta}^n$, where $\Omega^n$ and $\Omega_{\eta}^n$ intersect. 
Since $\partial \Omega_{\eta}^n$ can be viewed as a cubic spline interpolation of $\partial \Omega^n$, the standard estimate implies that
$$\text{dist}(\phibf_h^n(\Bx), \partial \Omega_{\eta}^n)\leq C \eta \|\phibf_h^n\|_{W^{1,\infty}(\Gamma^0)} \quad \forall \Bx \in \Gamma^0,$$
where $C>0$ is independent of $\eta$.
 Using the condition (\ref{assum}), by the inverse estimate and the extension of finite element function defined before, we have
\begin{equation}\nonumber
    \N{v_h}_{L^2(\Omega_{\eta}^n \backslash \Omega^n)}^2 \leq C \sum_{K \cap (\Omega_{\eta}^n \backslash \Omega^n) \neq \emptyset} h \eta \N{v_h}_{L^{\infty}(K)}^2  \leq C h^{-1}\eta \N{v_h}_{L^{2}(\Omega_h^n)}^2.
\end{equation}
The estimate for $\|v_h\|_{L^2(\Omega^n\backslash \Omega_{\eta}^n)}$ is  similar. 
The second inequality is a direct consequence of \cite[Eq. (17) in Lemma 10]{Nicaise}. 
\end{proof}

According to Lemma \ref{Lemma:4.4}, we have
\begin{equation*}
    \begin{aligned}
    \int_{\Omega_{\eta}^n\backslash \Omega^n}  \nabla \hat{u}^n \cdot \nabla e_{u,0}^n \mathrm{d} x  
    &= \int_{\Omega_{\eta}^n\backslash \Omega^n}  (\nabla \hat{u}^n -\nabla u^n) \cdot \nabla e_{u,0}^n \mathrm{d} x 
    +\int_{\Omega_{\eta}^n\backslash \Omega^n} \nabla u^n \cdot \nabla e_{u,0}^n \mathrm{d} x \\
    &\leq C h^{k-\frac{1}{2}}\eta^{\frac{1}{2}} \left| e_{u,0}^n \right|_{H^1(\Omega_{h}^n)} + C h^{-\frac{1}{2}}\eta \left|e_{u,0}^n\right|_{H^1(\Omega_{h}^n)} ,
    \end{aligned}
\end{equation*}
which yields 
\begin{equation}\nonumber
\begin{aligned}
   \text{I}_1^u &= \int_{\Omega_{\eta}^n\backslash \Omega^n}  \nabla \hat{u}^n \cdot \nabla e_{u,0}^n \mathrm{d} x - \int_{\Omega^n \backslash \Omega_{\eta}^n}  \nabla \hat{u}^n \cdot \nabla e_{u,0}^n \mathrm{d} x 
    \leq C \Big(h^{k-\frac{1}{2}}\eta^{\frac{1}{2}}+h^{-\frac{1}{2}}\eta \Big)\,\left\|e_{u,0}^n\right\|_{H^1(\Omega_h^n)}.
\end{aligned}
\end{equation}

To estimate $\mathrm{I}_3^u$, we use the deformation $\BT_\theta^n$ and the transport error $e_{u,\theta}^n$.
Since $u^n$ is independent of $\theta$ and $\partial^\bullet e_{u,\theta}^n=0$, combining \eqref{eq:DT_theta} and \eqref{grad_material} yields
\begin{equation}\label{eq:I3u_transport}
\begin{aligned}
\mathrm{I}_3^u
&=-\int_0^1 \frac{\D}{\D\theta}\int_{\Omega_\theta^n}\nabla u^n\cdot \nabla e_{u,\theta}^n\,\D x\,\D\theta \\
&=-\int_0^1\int_{\Omega_\theta^n}\Big( (\nabla \cdot \Be_{v,\theta}^n)\,\nabla u^n\cdot \nabla e_{u,\theta}^n
+(\nabla^2 u^n\,\Be_{v,\theta}^n)\cdot \nabla e_{u,\theta}^n
-(\nabla \Be_{v,\theta}^n)^{\top}\nabla e_{u,\theta}^n\cdot \nabla u^n\Big)\,\D x\,\D\theta \\
&\le C\int_0^1 \|\Be_{v,\theta}^n\|_{H^1(\Omega_\theta^n)}\,\|e_{u,\theta}^n\|_{H^1(\Omega_\theta^n)}\,\D\theta
\le C\|\Be_v^n\|_{H^1(\Omega^n)}\,\|e_{u,0}^n\|_{H^1(\Omega_h^n)},
\end{aligned}
\end{equation}
where in the last step, we used Lemma~\ref{lem:est T_theta} and (\ref{eq:ev_H1}) to obtain
\begin{equation}\label{eq:eu_theta_H1_bd}\nonumber
\begin{aligned}
\|e_{u,\theta}^n\|_{L^2(\Omega_\theta^n)}^2
&=\int_{\Omega^n}|e_{u,0}^n(\Bx)|^2\,|\det(\bbJ_{\theta}^n)|\,\D x
\le C\,\|e_{u,0}^n\|_{L^2(\Omega^n)}^2 \le C\,||e_{u,0}^n||_{L^2(\Omega_h^n)}^2, \\ 
|e_{u,\theta}^n|_{H^1(\Omega_\theta^n)}^2
&=\int_{\Omega^n}\big|\nabla e_{u,0}^n(\Bx)\,(\bbJ_\theta^n(\Bx))^{-1}\big|^2\,|\det(\bbJ_\theta^n)|\,\D x
\le C\,|e_{u,0}^n|_{H^1(\Omega^n)}^2
\le C\,|e_{u,0}^n|_{H^1(\Omega_h^n)}^2.
\end{aligned}
\end{equation}
Collecting \eqref{eq:Iu_split}--\eqref{eq:I3u_transport} and setting
\[
\mathrm{E}_h:=h^k+h^{k-\frac12}\eta^{\frac12}+h^{-\frac12}\eta,
\]
we arrive at
\begin{equation}\label{eq:Iu_final}
|\mathrm{I}^u|
\le C\Big(\mathrm{E}_h+\|\Be_v^n\|_{H^1(\Omega^n)}\Big)\,\|e_{u,0}^n\|_{H^1(\Omega_h^n)}.
\end{equation}

\paragraph{\textbf{Estimate of $\mathrm{II}^u$.}}
Proceeding analogously, we write
\[
\mathrm{II}^u
=\Big(\int_{\Omega_\eta^n}\hat u^n e_{u,0}^n\,\D x -\int_{\Omega^n} \hat u^n e_{u,0}^n\,\D x \Big) 
+\int_{\Omega^n}(\hat u^n-u^n)e_{u,0}^n\,\D x
+\Big(\int_{\Omega^n} u^n e_{u,0}^n\,\D x -\int_{\Omega(t_n)} u^n e_{u,1}^n\,\D x\Big),
\]
and obtain
\begin{equation}\label{eq:IIu_final}
|\mathrm{II}^u|
\le C\Big(h^{k+\frac12}\eta^{\frac12}+h^{-\frac12}\eta\Big)\|e_{u,0}^n\|_{H^1(\Omega_h^n)}
+Ch^k\|e_{u,0}^n\|_{L^2(\Omega_h^n)}
+C\|\Be_v^n\|_{H^1(\Omega^n)}\|e_{u,0}^n\|_{L^2(\Omega_h^n)}.
\end{equation}

\paragraph{\textbf{Estimate of $\mathrm{III}^u$.}}
Using the same transport argument as in \eqref{eq:I3u_transport}, we have
\begin{equation}\label{eq:IIIu_final}
\begin{aligned}
|\mathrm{III}^u|
&=\Big(\int_{\Omega(t_n)} f e_{u,1}^n\,\D x -\int_{\Omega^n} f e_{u,0 }^n\,\D x \Big)
+\Big(\int_{\Omega^n} f e_{u,0}^n\,\D x -\int_{\Omega_\eta^n} f e_{u,0}^n\,\D x \Big) \\
&\le \left|\int_0^1 \frac{\D}{\D\theta}\Big(\int_{\Omega_\theta^n} f\,e_{u,\theta}^n\,\D x\Big)\,\D\theta\right| + C h^{-\frac12}\eta \|e_{u,0}^n\|_{L^2(\Omega_h^n)} 
\le C \Big( \|\Be_v^n\|_{H^1(\Omega^n)} + h^{-\frac12}\eta \Big) \,\|e_{u,0}^n\|_{L^2(\Omega_h^n)}.
\end{aligned}
\end{equation}

\paragraph{\textbf{Estimate of $\mathrm{IV}^u$.}}
Before estimating the last term, we first recall the trace inequality. For any $v \in H^1(K)$, it holds that
\begin{equation*}
\|v\|_{L^2(E)}^2 \le C \Big( h^{-1} \|v\|_{L^2(K)}^2 + h |v|_{H^1(K)}^2 \Big).
\end{equation*}
By the definition of the jump, we have
\begin{equation*}
\|\llbracket \partial_{\bm n}^s v \rrbracket\|_{L^2(E)}^2
\le C \Big(
\|\partial_{\bm n}^s v\|_{L^2(E \cap K_1)}^2
+
\|\partial_{\bm n}^s v\|_{L^2(E \cap K_2)}^2
\Big).
\end{equation*}
Thus, it suffices to estimate the higher-order normal derivatives on the face $E$ in terms of the quantities defined on the neighboring elements. Applying the trace inequality with $v = \nabla^s e_{u,0}^n$ and $v = \nabla^s (\hat u^n - u^n)$ yields
\begin{equation}\label{trace-ineq}
\begin{aligned}
\|\partial_{\bm n}^s (\hat u^n - u^n)\|_{L^2(E)}^2 \le \|\nabla^s (\hat u^n - u^n)\|_{L^2(E)}^2
&\le C \Big(
h^{-1} \|\nabla^s (\hat u^n - u^n)\|_{L^2(K)}^2
+
h \, |\nabla^s (\hat u^n - u^n)|_{H^1(K)}^2
\Big), \\
\|\partial_{\bm n}^s e_{u,0}^n\|_{L^2(E)}^2 \le \|\nabla^s e_{u,0}^n\|_{L^2(E)}^2
&\le C \Big(
h^{-1} \|\nabla^s e_{u,0}^n\|_{L^2(K)}^2
+
h \, |\nabla^s e_{u,0}^n|_{H^1(K)}^2
\Big).
\end{aligned}
\end{equation}
Since $e_{u,0}^n$ is a finite element function, the following inverse inequalities hold:
\[
\|\nabla^s e_{u,0}^n\|_{L^2(K)}
\le C h^{1-s} |e_{u,0}^n|_{H^1(K)}, \qquad 
|\nabla^s e_{u,0}^n|_{H^1(K)}
\le C h^{-s} |e_{u,0}^n|_{H^1(K)}.
\]
Combining standard interpolation error estimates with \eqref{trace-ineq}, we obtain
\begin{equation*}
\begin{aligned}
\|\partial_{\bm n}^s (\hat u^n - u^n)\|_{L^2(E)}^2
&\le C \Big(
h^{-1} h^{2(k+1-s)}
+
h \cdot h^{2(k-s)}
\Big) |u^n|_{H^{k+1}(K)}^2 \le C h^{2k-2s+1} |u^n|_{H^{k+1}(K)}^2, \\
\|\partial_{\bm n}^s e_{u,0}^n\|_{L^2(E)}^2
&\le C \Big(
h^{-1} h^{2(1-s)} + h \cdot h^{-2s} \Big) |e_{u,0}^n|_{H^1(K)}^2 \le C h^{1-2s} |e_{u,0}^n|_{H^1(K)}^2.
\end{aligned}
\end{equation*}
Multiplying by the weight $h^{2s-1}$ and applying the Cauchy-Schwarz inequality, we deduce that
\begin{equation*}
h^{2s-1} \int_E \llbracket{\partial_{\bm{n}}^s (\hat u^n - u^n)}\rrbracket \llbracket{\partial_{\bm{n}}^s e_{u,0}^n}\rrbracket \, ds 
\le C (h^{4s-2} h^{2k-2s+1} h^{1-2s})^{1/2} |e_{u,0}^n|_{H^1(K)} 
\le C h^k |e_{u,0}^n|_{H^1(K)}.
\end{equation*}
Summing over all elements $K$, we conclude that
\begin{equation}\label{eq:IVu_final}
|\mathrm{IV}^u|=\big|\mathscr{J}_h^n(\hat u^n-u^n,e_{u,0}^n)\big|
\le Ch^k\,\|e_{u,0}^n\|_{H^1(\Omega_h^n)}.
\end{equation}

\paragraph{\textbf{Conclusion of \eqref{du}.}}
Combining \eqref{eq:Iu_final}--\eqref{eq:IVu_final} and using \eqref{vtophi_H1}, we finally obtain
\[
\int_{\Omega_\eta^n} d_u^n\, e_{u,0}^n\,\D x
\le C\Big(\mathrm{E}_h+\|\Be_\phi^n\|_{H^1(\Omega^0)}\Big)\,\|e_{u,0}^n\|_{H^1(\Omega_h^n)}.
\]

\subsubsection{Estimate for $d_p^n$}\label{sec:estimate for dp}
Subtracting \eqref{system2-p} from \eqref{eq:hat p}, after taking $\chi_p = e_p^n= e_{p,0}^n \in V(k,\Ct_h^n)$ in \eqref{eq:hat p} and $\chi_p = e_{p,1}^n \in H^1(\Omega(t_n))$ in \eqref{system2-p},
we obtain 
\begin{equation*}
\begin{aligned}
    &\int_{\Omega_{\eta}^n} d_p^n  e_{p,0}^n \D x = \int_{\Omega_{\eta}^n} \nabla \hat{p}^n \cdot \nabla e_{p,0}^n +\hat{p}^n e_{p,0}^n - (\hat{u}^n - u_d) e_{p,0}^n \mathrm{d} x + \mathscr{J}_h^n(\hat{p}^n,e_{p,0}^n) \\
  =&\underbrace{\bigg(\int_{\Omega_{\eta}^n} \nabla \hat{p}^n \cdot \nabla e_{p,0}^n \mathrm{d} x - \int_{\Omega(t_n)} \nabla p^n \cdot \nabla e_{p,1}^n \mathrm{d} x\bigg)}_{\text{I}^p}  +
  \underbrace{\bigg(\int_{\Omega_{\eta}^n} \hat{p}^n e_{p,0}^n \mathrm{d} x - \int_{\Omega(t_n)} p^n e_{p,1}^n \mathrm{d} x\bigg)}_{\text{II}^p}\\
  &+ \underbrace{\bigg( \int_{\Omega_{\eta}^n} u_d e_{p,0}^n \mathrm{d} x - \int_{\Omega(t_n)}  u_d e_{p,1}^n \mathrm{d} x \bigg)}_{\text{III}^p}+ 
  \underbrace{\bigg(\mathscr{J}_h^n(\hat{p}^n,e_{p,0}^n) - \mathscr{J}_h^n(p^n,e_{p,0}^n)\bigg)}_{\text{IV}^p}+
  \underbrace{\bigg(\int_{\Omega(t_n)} u^n e_{p,1}^n \D x-\int_{\Omega_{\eta}^n} \hat{u}^n e_{p,0}^n \D x \bigg)}_{\text{V}^p}.
\end{aligned}
\end{equation*}
The estimates for $\text{I}^p - \text{V}^p$ are similar to $\text{I}^u - \text{IV}^u$. 
Thus, we obtain the following consistency error of the adjoint state
\begin{equation}\nonumber
    \int_{\Omega_{\eta}^n} d_p^n  e_{p,0}^n \D x \leq C\Big(\text{E}_h+ \N{\Be_\phi^n}_{H^1(\Omega^0)}\Big) \N{e_{p,0}^n}_{H^1(\Omega_h^n)}.
\end{equation}

\subsubsection{Estimate for $\Bd_w^n$}\label{sec:estimate for dw}
Using \eqref{eq:hat w} and \eqref{system2-w}, and taking the test functions
$\chibf_w=\Be_{w,0}^n=\Be_w^n\in \bm V(k,\Ct_h^n)$ in \eqref{eq:hat w} and
$\chibf_w=\Be_{w,1}^n\in \BH^1(\Omega(t_n))$ in \eqref{system2-w}, we obtain
\begin{equation}\label{eq:dw_split}
\begin{aligned}
 &\int_{\Omega_{\eta}^n} \Bd_w^n \cdot \Be_{w,0}^n \mathrm{~d} x = \int_{\Omega_{\eta}^n} \nabla \hat{\Bw}^n: \nabla \Be_{w,0}^n+\hat{\Bw}^n \cdot \Be_{w,0}^n \mathrm{d} x + \mathscr{J}_h^n(\hat{\Bw}^n,\Be_{w,0}^n) + \mathrm{d} J\left( \Gamma_{\eta}^n, \hat{u}^n, \hat{p}^n ; \Be_{w,0}^n\right) \\
 =& \underbrace{\bigg(\int_{\Omega_{\eta}^n} \nabla \hat{\Bw}^n : \nabla \Be_{w,0}^n \mathrm{d} x - \int_{\Omega(t_n)} \nabla \Bw^n : \nabla \Be_{w,1}^n \mathrm{d} x\bigg)}_{\text{I}^w}  +
 \underbrace{\bigg(\int_{\Omega_{\eta}^n} \hat{\Bw}^n \cdot \Be_{w,0}^n \mathrm{d} x - \int_{\Omega(t_n)} \Bw^n \cdot \Be_{w,1}^n \mathrm{d} x\bigg)}_{\text{II}^w}\\
 &\;+\underbrace{ \bigg(\mathrm{d} J\left( \Gamma_{\eta}^n, \hat{u}^n, \hat{p}^n ; \Be_{w,0}^n\right) - \mathrm{d} J\left( \Gamma(t_n), u^n, p^n ; \Be_{w,1}^n\right)\bigg)}_{\text{III}^w} +\underbrace{\bigg( \mathscr{J}_h^n(\hat{\Bw}^n,\Be_{w,0}^n) - \mathscr{J}_h^n(\Bw^n,\Be_{w,0}^n)\bigg)}_{\text{IV}^w}.
\end{aligned}
\end{equation}
The terms $\mathrm{I}^w$, $\mathrm{II}^w$ and $\mathrm{IV}^w$ can be estimated in the same way as in
subsections~\ref{sec:estimate for du}--\ref{sec:estimate for dp}, which gives
\begin{equation}\label{eq:dw_linear_terms}\nonumber
|\mathrm{I}^w|+|\mathrm{II}^w|+|\mathrm{IV}^w|
\le C\Big(\mathrm{E}_h+\|\Be_v^n\|_{H^1(\Omega^n)}\Big)\,\|\Be_{w,0}^n\|_{\BH^1(\Omega_h^n)}.
\end{equation}
It remains to bound the shape derivative term $\mathrm{III}^w$.

\paragraph{\textbf{Estimate of $\mathrm{III}^w$.}}
Using \eqref{distributed_derivative}, we decompose $\mathrm{III}^w$ into terms of the form

\begin{equation*}\label{estimate_derivative}
\begin{aligned}
 \text{III}^w 
 = & \underbrace{\bigg(\int_{\Omega_{\eta}^n} 2 \nabla \hat{u}^n \cdot \bbD(\Be_{w,0}^n) \nabla \hat{p}^n  {\rm d} x - \int_{\Omega(t_n)} 2\nabla u^n \cdot \bbD(\Be_{w,1}^n) \nabla p^n {\rm d} x) \bigg)}_{\text{III}_1^w}    \\                                
 &\ + \int_{\Omega_{\eta}^n}  \hat{p}^n \nabla f \cdot \Be_{w,0}^n {\rm d}x-\int_{\Omega(t_n)}p^n \nabla f \cdot \Be_{w,1}^n {\rm d}x\\
 &\ + \int_{\Omega_{\eta}^n} \Big(\frac{1}{2}|\hat{u}^n - u_{d}|^{2} - \nabla \hat{u}^n \cdot \nabla \hat{p}^n - \hat{u}^n \hat{p}^n + f\hat{p}^n \Big) \nabla \cdot \Be_{w,0}^n {\rm d} x - \int_{\Omega_{\eta}^n} (\hat{u}^n - u_{d})\nabla u_d \cdot \Be_{w,0}^n {\rm d} x\\
   &\ -\int_{\Omega(t_n)} \Big(\frac{1}{2}|u^n - u_{d}|^{2} - \nabla u^n \cdot \nabla p^n - u^n p^n + fp^n \Big) \nabla \cdot \Be_{w,1}^n {\rm d} x  + \int_{\Omega(t_n)} (u^n - u_{d})\nabla u_d \cdot \Be_{w,1}^n {\rm d} x.
\end{aligned}
\end{equation*}
In fact, taking the first term as an example and using the interpolation error estimate, we have 
\begin{equation}\nonumber
    \begin{aligned}
        \text{III}_1^w 
        = &\underbrace{\int_{\Omega_{\eta}^n} 2\nabla (\hat{u}^n-u^n)  \cdot \bbD(\Be_{w,0}^n) \nabla \hat{p}^n \D x}_{\text{III}_{1,1}^w}
         + \underbrace{\int_{\Omega_{\eta}^n} 2\nabla u^n \cdot \bbD(\Be_{w,0}^n) \nabla \hat{p}^n \D x
        -\int_{\Omega^n} 2\nabla u^n \cdot \bbD(\Be_{w,0}^n) \nabla \hat{p}^n \D x}_{\text{III}_{1,2}^w}\\
        &\ +\underbrace{\int_{\Omega^n} 2\nabla u^n \cdot \bbD(\Be_{w,0}^n)\nabla (\hat{p}^n-p^n) \D x}_{\text{III}_{1,3}^w}
         +\underbrace{\int_{\Omega^n} 2 \nabla u^n \cdot \bbD(\Be_{w,0}^n) \nabla p^n \D x - \int_{\Omega(t_n)} 2 \nabla u^n \cdot\bbD(\Be_{w,1}^n)\nabla p^n \D x}_{\text{III}_{1,4}^w} \\
        \leq\,&  C h^k \SN{\Be_{w,0}^n}_{\BH^1(\Omega_h^n)} + 
        \text{III}_{1,2}^w+\text{III}_{1,4}^w.
    \end{aligned}
\end{equation}
The second term $\text{III}_{1,2}^w$ in this formula can be estimated by H$\ddot{\mathrm{o}}$lder's inequality and Lemma \ref{Lemma:4.4} as
\begin{equation}\label{eq:III1w_interp}
\begin{aligned}
\int_{\Omega_{\eta}^n \backslash \Omega^n} 2\nabla u^n \cdot \bbD(\Be_{w,0}^n) \nabla \hat{p}^n{\rm d} x
    =& \int_{\Omega_{\eta}^n \backslash \Omega^n} 2\nabla u^n \cdot \bbD(\Be_{w,0}^n) (\nabla \hat{p}^n -\nabla p^n){\rm d} x + \int_{\Omega_{\eta}^n \backslash \Omega^n} 2 \nabla u^n \cdot \bbD(\Be_{w,0}^n)\nabla p^n{\rm d} x\\
    \leq& C ||\nabla u^n||_{L^\infty(D)} ||\bbD(\Be_{w,0}^n)||_{\mathbb{L}^2(\Omega_{\eta}^n \backslash \Omega^n)} \left(||\nabla \hat{p}^n - \nabla p^n||_{L^2(\Omega_{\eta}^n \backslash \Omega^n)} + ||\nabla p^n||_{L^2(\Omega_{\eta}^n \backslash \Omega^n)}\right) \\
    \leq & C \Big(h^{k-\frac 12}\eta^{\frac 12} + h^{-\frac 12}\eta\Big)\SN{\Be_{w,0}^n}_{\BH^1(\Omega_h^n)}.
\end{aligned}
\end{equation}
The other part follows by a similar argument. For the last bracket in $\text{III}_{1,4}^w$ , we employ the domain deformation
$\BT_\theta^n$ and define $\Be_{w,\theta}^n$ by transport, i.e.,
$\Be_{w,\theta}^n\circ \BT_\theta^n=\Be_{w,0}^n$. In particular, $\partial^\bullet \Be_{w,\theta}^n=0$.
Then, using the transport formula and the identities
$\partial^\bullet(\nabla u^n)=\nabla^2 u^n\,\Be_{v,\theta}^n$,
$\partial^\bullet(\nabla p^n)=\nabla^2 p^n\,\Be_{v,\theta}^n$, and
$\partial^\bullet(\nabla \Be_{w,\theta}^n)=-(\nabla \Be_{v,\theta}^n)^{\top}\nabla\Be_{w,\theta}^n$,
we obtain
\begin{equation}\label{eq:III1w_transport}
\begin{aligned}
   \text{III}_{1,4}^w 
    = &-\int_0^1 \frac{\D}{\D \theta} \int_{\Omega_{\theta}^n} 2 \nabla u^n \cdot \bbD(\Be_{w,\theta}^n) \nabla p^n \,\D x \D\theta \\
    =& -2\int_0^1 \int_{\Omega_{\theta}^n} (\nabla \cdot \Be_{v, \theta}^n) \nabla u^n \cdot \bbD(\Be_{w,\theta}^n) \nabla p^n  + \partial^{\bullet} (\nabla u^n) \cdot \bbD(\Be_{w,\theta}^n) \nabla p^n \\
    & \quad \quad \quad \quad + \nabla u^n \cdot \partial^{\bullet} \bbD(\Be_{w,\theta}^n)\nabla p^n + \nabla u^n \cdot \bbD(\Be_{w,\theta}^n) \partial^{\bullet} (\nabla p^n) \D x \D \theta\\
    \leq & C\|\Be_v^n\|_{H^1(\Omega^n)}\,\|\Be_{w,0}^n\|_{\BH^1(\Omega_h^n)}. 
\end{aligned}
\end{equation}

Combining \eqref{eq:III1w_interp}--\eqref{eq:III1w_transport} yields
\[
|\mathrm{III}_1^w|
\le C\Big(\mathrm{E}_h+\|\Be_v^n\|_{H^1(\Omega^n)}\Big)\,\|\Be_{w,0}^n\|_{\BH^1(\Omega_h^n)}.
\]
All the remaining contributions in $\mathrm{III}^w$ can be treated analogously, and we conclude that
\begin{equation}\label{eq:IIIw_final}
|\mathrm{III}^w|
\le C\Big(\mathrm{E}_h+\|\Be_v^n\|_{H^1(\Omega^n)}\Big)\,\|\Be_{w,0}^n\|_{\BH^1(\Omega_h^n)}.
\end{equation}

\paragraph{\textbf{Conclusion of \eqref{eq:dw_split}.}} Thus, we can obtain the consistency error for the velocity variable from \eqref{vtophi_H1}:
\begin{equation*}
    \int_{\Omega_{\eta}^n} \Bd_w^n  \Be_{w,0}^n \D x \leq C\left(\text{E}_h+ \N{\Be_\phi^n}_{H^1(\Omega^0)}\right) \N{\Be_{w,0}^n}_{\BH^1(\Omega_h^n)}.
\end{equation*}
\begin{Lemma}\label{lem:consis est}
    Under the assumption of Theorem \ref{Thm:4.1} and the property \eqref{assum}, we denote  $\text{E}_h = h^k + h^{k-\frac{1}{2}}\eta^{\frac 12} + h^{-\frac{1}{2}}\eta$. 
    The consistency estimates satisfy
    \begin{equation}\nonumber
    \begin{aligned}
        \int_{\Omega_{\eta}^n} d_u^n e_u^n \D x &\leq C\left(\text{E}_h + \|\Be_\phi^n\|_{H^1(\Omega^0)}\right) \N{e_u^n}_{H^1(\Omega_h^n)},\\
      \int_{\Omega_{\eta}^n} d_p^n  e_{p}^n \D x &\leq C\left(\text{E}_h+ \N{\Be_\phi^n}_{H^1(\Omega^0)}\right) \N{e_{p}^n}_{H^1(\Omega_h^n)},\\
      \int_{\Omega_{\eta}^n} \Bd_w^n  \Be_w^n \D x &\leq C\left(\text{E}_h+ \N{\Be_\phi^n}_{H^1(\Omega^0)}\right) \N{\Be_{w}^n}_{\BH^1(\Omega_h^n)},
    \end{aligned}
    \end{equation}
    where $\Be_\phi^n$, $e_u^n$, $e_p^n$ and $\Be_w^n$ are defined in \eqref{eq:sta err}.
    Here  $C>0$ is independent of $\tau$, $h$ and $n$.
\end{Lemma}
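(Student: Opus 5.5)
The lemma collects the three consistency bounds derived in Subsections~\ref{sec:estimate for du}--\ref{sec:estimate for dw}, so the plan is to assemble them in a uniform way. For each variable $\sigma\in\{u,p,\Bw\}$, I test the perturbed interpolant equation \eqref{interpolation_weak} with $e_{\sigma,0}^n=e_\sigma^n\in V(k,\Ct_h^n)$. I test the exact weak form \eqref{system2} on $\Omega(t_n)$ with the transported error $e_{\sigma,1}^n=e_{\sigma,0}^n\circ(\BT_1^n)^{-1}\in H^1(\Omega(t_n))$. This is admissible by Lemma~\ref{lem:est T_theta}. Subtracting gives $\int_{\Omega_\eta^n} d_\sigma^n e_\sigma^n\,\D x$ as a sum of differences of the type $\mathrm{I}$--$\mathrm{IV}$. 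I also use $\mathscr{J}_h^n(\sigma^n,\cdot)=0$, which holds by the smoothness of the exact solutions.

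Each difference is split into three pieces by passing through the intermediate domain $\Omega^n=\phibf_h^n(\Omega^0)$.

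\textbf{Spline versus polygonal domain ($\Omega_\eta^n$ versus $\Omega^n$).} This contribution is controlled by the strip estimate of Lemma~\ref{Lemma:4.4}, after splitting $\hat\sigma^n=(\hat\sigma^n-\sigma^n)+\sigma^n$. The discrete factor gives $h^{-1/2}\eta^{1/2}$ and the smooth factor gives $\eta^{1/2}$. Together these produce the terms $h^{k-1/2}\eta^{1/2}$ and $h^{-1/2}\eta$ in $\mathrm{E}_h$.

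\textbf{Interpolation error on $\Omega^n$.} This piece is bounded by $Ch^k$ times the $H^1$ norm of the error, using standard interpolation estimates.

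\textbf{Polygonal versus exact domain ($\Omega^n$ versus $\Omega(t_n)$).} This piece is written as $-\int_0^1\frac{\D}{\D\theta}\int_{\Omega_\theta^n}(\cdots)\,\D x\,\D\theta$. I differentiate with the transport theorem, using $\partial^\bullet e_{\sigma,\theta}^n=0$ and \eqref{grad_material}. Every integrand is then linear in $\Be_{v,\theta}^n$ or $\nabla\Be_{v,\theta}^n$, with $W^{1,\infty}$ or $W^{2,\infty}$ coefficients coming from the exact solutions. This gives a bound $C\|\Be_v^n\|_{H^1(\Omega^n)}\|e_\sigma^n\|_{H^1(\Omega_h^n)}$ through \eqref{eq:ev_H1} and the uniform Jacobian bounds. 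Lemma~\ref{lem:vtophi} then converts $\|\Be_v^n\|_{H^1}$ into $C\|\Be_\phi^n\|_{H^1(\Omega^0)}$.

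The ghost-penalty difference is handled as in \eqref{eq:IVu_final}. I combine trace and inverse inequalities with the weight $h^{2s-1}$ to get $Ch^k\|e_\sigma^n\|_{H^1(\Omega_h^n)}$.

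For $d_u^n$ this is exactly \eqref{eq:Iu_final}--\eqref{eq:IVu_final}.

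For $d_p^n$ there is an additional term $\mathrm{V}^p$ coming from the right-hand side $j_u'=u-u_d$. I split it as $(\hat u^n-u^n)$ on $\Omega_\eta^n$, plus a strip term, plus a transport term. This works because $j_u'$ is linear in $u$; for $j=\frac12|\nabla u|^2$ the analogous term has the form of $\mathrm{I}^u$.

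For $\Bd_w^n$, the terms $\mathrm{I}^w,\mathrm{II}^w,\mathrm{IV}^w$ repeat the scalar argument componentwise. The shape-derivative term $\mathrm{III}^w$ is the main obstacle, because \eqref{distributed_derivative} is trilinear/quadratic in $(\hat u^n,\hat p^n,\Be_w)$. I first replace $\hat u^n,\hat p^n$ by $u^n,p^n$ one factor at a time, as in $\mathrm{III}_{1,1}^w,\mathrm{III}_{1,3}^w$. The remaining factor is bounded in $L^\infty$: for interpolants I use $W^{1,\infty}$ stability of interpolation, or \eqref{eq:bd of uh}-type bounds. This gives $Ch^k|\Be_w^n|_{H^1}$.

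Next I treat the strip differences as in \eqref{eq:III1w_interp}. The nonlinear terms $\frac12|u-u_d|^2$, $\nabla u\cdot\nabla p$ and $up$ only ever appear multiplied by $\nabla\cdot\Be_w$ or $\Be_w$. Their smooth coefficients have bounded $W^{1,\infty}$ norm, so Lemma~\ref{Lemma:4.4} applies verbatim.

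Finally, the $\Omega^n\to\Omega(t_n)$ difference is done by transport. Here I use $\partial^\bullet\nabla\Be_{w,\theta}^n=-(\nabla\Be_{v,\theta}^n)^\top\nabla\Be_{w,\theta}^n$, so $\partial^\bullet\bbD(\Be_{w,\theta}^n)$ is a symmetrized version of this. The material derivatives of the smooth fields are $\nabla(\cdot)\,\Be_{v,\theta}^n$, which requires $C^2$ regularity of $u,p,f,u_d$ (guaranteed by $k\ge2$).

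The delicate point is that the $L^\infty$ bounds must hold for the factors not carrying $\Be_v$ or $\Be_w$. Since these are exact solutions or $f,u_d$, they are smooth, and every term ends up as $C(\mathrm{E}_h+\|\Be_\phi^n\|_{H^1(\Omega^0)})\|\Be_w^n\|_{\BH^1(\Omega_h^n)}$. Summing all contributions yields the three stated inequalities. The constants are independent of $n$ because the bounds of Lemmas~\ref{flow_inver}--\ref{lem:est T_theta} are uniform.
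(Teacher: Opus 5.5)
Your proposal is correct and follows the paper's proof essentially step by step. You test the interpolant equations with $e_{\sigma,0}^n$ and the exact weak forms with the transported error $e_{\sigma,1}^n$, split each difference through $\Omega^n$ into strip (Lemma~\ref{Lemma:4.4}), interpolation and $\BT_\theta^n$-transport parts, bound the ghost-penalty difference by trace and inverse inequalities, and convert $\|\Be_v^n\|_{H^1(\Omega^n)}$ into $\|\Be_\phi^n\|_{H^1(\Omega^0)}$ via Lemma~\ref{lem:vtophi}, with your remarks on $\mathrm{V}^p$ and on replacing $\hat u^n,\hat p^n$ one factor at a time in $\mathrm{III}^w$ only spelling out what the paper calls ``analogous.''
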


\subsection{Stability estimate}\label{sec:Stability estimate}
In this subsection, our aim is to derive bounds for $e_u^n$, $e_p^n$ and $\Be_w^n$ based on the consistency estimates.
Subtracting (\ref{interpolation_weak}) from (\ref{fe_weak}), we arrive at the following equalities:
\begin{equation*}
\begin{aligned}
  0 &= \int_{\Omega_{\eta}^n} \Big( \nabla e_u^n \cdot \nabla e_u^n + e_u^n e_u^n + d_u^n  e_u^n \Big) \D x + \mathscr{J}_h(e_u^n, e_u^n),\\
  0 &= \int_{\Omega_{\eta}^n} \left( \nabla e_p^n \cdot \nabla e_p^n +  e_p^n e_p^n + d_p^n  e_p^n \right) \D x + \mathscr{J}_h(e_p^n, e_p^n) - \int_{\Omega_{\eta}^n} j_u^{\prime}\left(x, u_h^n\right) e_p^n \mathrm{d} x + \int_{\Omega_{\eta}^n} j_u^{\prime}\left(x, \hat{u}^n\right) e_p^n \mathrm{d} x,\\
  0 &= \int_{\Omega_{\eta}^n} \Big( \nabla \Be_w^n : \nabla \Be_w^n + \Be_w^n \cdot \Be_w^n + \Bd_w^n \cdot \Be_w^n \Big) \D x + \mathscr{J}_h(\Be_w^n, \Be_w^n) + \mathrm{d} J\left( \Gamma_{\eta}^n, u_h^n, p_h^n ; \Be_w^n\right) - \mathrm{d} J\left( \Gamma_{\eta}^n, \hat{u}^n, \hat{p}^n ; \Be_w^n\right).
\end{aligned}
\end{equation*}
Then using \eqref{eq:norm inequality-h1} and \eqref{eq:norm inequality-l2}, we can obtain the following stability results
\begin{align*}
    &C||e_u^n||^2_{H^1(\Omega_h^n)} \leq  \int_{\Omega_{\eta}^n} \Big( \nabla e_u^n \cdot \nabla e_u^n + e_u^ne_u^n \Big) \D x + \mathscr{J}_h(e_u^n, e_u^n)  \leq  \Big| \int_{\Omega_{\eta}^n} d_u^n  e_u^n \D x \Big|, \\
    &C||e_p^n||^2_{H^1(\Omega_h^n)} \leq   \int_{\Omega_{\eta}^n} \left( j_u^{\prime}\left(x, u_h^n\right) e_p^n - j_u^{\prime}\left(x, \hat{u}^n\right) e_p^n \right) \mathrm{d} x - \int_{\Omega_{\eta}^n} d_p^n  e_p^n \D x  \leq   \SN{\int_{\Omega_{\eta}^n} d_p^n  e_p^n \D x } + \N{e_u^n}_{L^2(\Omega_{\eta}^n)}\N{e_p^n}_{L^2(\Omega_{\eta}^n)} ,\\
    &C||\Be_w^n||^2_{\BH^1(\Omega_h^n)} \leq  \mathrm{d} J\left( \Gamma_{\eta}^n, \hat{u}^n, \hat{p}^n ; \Be_w^n\right) - \mathrm{d} J\left( \Gamma_{\eta}^n, u_h^n, p_h^n ; \Be_w^n\right) - \int_{\Omega_{\eta}^n} \Bd_w^n  \Be_w^n \D x.
\end{align*}
Observing the formulation of the Eulerian derivative in \eqref{distributed_derivative}, we have
\begin{equation} \nonumber
\begin{aligned}
    &\mathrm{d} J\left( \Gamma_{\eta}^n, u_h^n, p_h^n ;\Be_w^n\right) - \mathrm{d} J\left( \Gamma_{\eta}^n, \hat{u}^n, \hat{p}^n ;\Be_w^n\right) \\    
    =  & \int_{\Omega_{\eta}^n} 2\nabla u_h^n \cdot \bbD(\Be_w^n) \nabla (p_h^n - \hat{p}^n) {\rm d} x + \int_{\Omega_{\eta}^n} 2\nabla (u_h^n - \hat{u}^n) \cdot \bbD(\Be_w^n) \nabla \hat{p}^n {\rm d} x + \int_{\Omega_{\eta}^n} e_p^n \nabla f \cdot \Be_w^n {\rm d} x   \\
    &\ +     \int_{\Omega_{\eta}^n} \Big(\frac{1}{2}(u_h^n + \hat{u}^n)(u_h^n - \hat{u}^n) - (u_h^n - \hat{u}^n) u_d - \nabla u_h^n \cdot \nabla (p_h^n - \hat{p}^n) - \nabla (u_h^n - \hat{u}^n) \cdot \nabla \hat{p}^n \Big) \nabla \cdot \Be_w^n {\rm d} x \\
    &\ +     \int_{\Omega_{\eta}^n} \Big(-u_h^n (p_h^n - \hat{p}^n) -  (u_h^n - \hat{u}^n) \hat{p}^n + fe_p^n \Big) \nabla \cdot \Be_w^n {\rm d} x - \int_{\Omega_{\eta}^n} e_u^n \nabla u_d \cdot \Be_w^n {\rm d} x                                                             \\
    =   &   \int_{\Omega_{\eta}^n} 2\nabla u_h^n \cdot \bbD(\Be_w^n) \nabla e_p^n {\rm d} x + \int_{\Omega_{\eta}^n} 2\nabla e_u^n \cdot \bbD(\Be_w^n) \nabla \hat{p}^n {\rm d} x + \int_{\Omega_{\eta}^n} e_p^n \nabla f \cdot \Be_w^n {\rm d} x  - \int_{\Omega_{\eta}^n} e_u^n \nabla u_d \cdot \Be_w^n {\rm d} x \\
    &\ +   \int_{\Omega_{\eta}^n} \Big(\frac{1}{2}(u_h^n + \hat{u}^n)e_u^n - e_u^n u_d - \nabla u_h^n \cdot \nabla e_p^n - \nabla e_u^n \cdot \nabla \hat{p}^n - u_h^n e_p^n - e_u^n \hat{p}^n + fe_p^n \Big) \nabla \cdot \Be_w^n {\rm d} x .  
\end{aligned}
\end{equation}
Using condition (\ref{eq:bd of uh}), the velocity error can be transformed as
\begin{equation}\nonumber
\begin{aligned}
    \N{\Be_w^n}^2_{\BH^1(\Omega_h^n)} &\leq C\left( \mathrm{d} J\left( \Gamma_{\eta}^n, \hat{u}^n, \hat{p}^n ; \Be_w^n\right) - \mathrm{d} J\left( \Gamma_{\eta}^n, u_h^n, p_h^n ; \Be_w^n\right) - \int_{\Omega_{\eta}^n} \Bd_w^n  \Be_w^n \D x\right)\\
    &\leq C \left(\left(\N{e_u^n}_{H^1(\Omega_{\eta}^n)} + \N{e_p^n}_{H^1(\Omega_{\eta}^n)} \right) \N{\Be_w^n}_{\BH^1(\Omega_{\eta}^n)} + \Big| \int_{\Omega_{\eta}^n} \Bd_w^n  \Be_w^n \D x \Big|\right).
\end{aligned}
\end{equation}

Based on the above discussion, we obtain the following stability estimates.
\begin{Lemma}\label{lem:sta est}
    Under the assumptions in Theorem \ref{Thm:4.1}, for any $0 \leq n \leq N$, the stability estimates of $u$ and $p$ satisfy
    \begin{subequations}\nonumber
    \begin{align}
      \N{e_u^n}_{H^1(\Omega_h^n)}^2 & \leq C \SN{\int_{\Omega_{\eta}^n} d_u^n e_u^n \D x},\\
      \N{e_p^n}_{H^1(\Omega_h^n)}^2 & \leq C  \left( \N{e_u^n}_{L^2(\Omega_{\eta
      }^n)}\N{e_p^n}_{L^2(\Omega_{\eta}^n)} + \SN{\int_{\Omega_{\eta}^n} d_p^n e_p^n \D x} \right).\\
      \end{align}
    \end{subequations}
    Additionally, with \eqref{eq:bd of uh}, for $0 \leq n \leq m-1$ the stability estimate of $\Bw$ satisfies
    \begin{equation}\nonumber
      \N{\Be_w^n}^2_{\BH^1(\Omega_h^n)} \leq  C \left(\left(\N{e_u^n}_{H^1(\Omega_{\eta}^n)} + \N{e_p^n}_{H^1(\Omega_{\eta}^n)} \right) \N{\Be_w^n}_{\BH^1(\Omega_{\eta}^n)} + \Big| \int_{\Omega_{\eta}^n} \Bd_w^n  \Be_w^n \D x \Big|\right),
    \end{equation}
    where $C>0$ is independent of $\tau$, $h$ and $n$.
\end{Lemma}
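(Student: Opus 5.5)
The plan is to subtract the perturbed interpolation equations \eqref{interpolation_weak} from the discrete equations \eqref{fully_discrete}, test each error equation with its own error, and then use the coercivity \eqref{coercivity} of $\mathscr{A}_h^n$. This coercivity rests on \eqref{eq:norm inequality-h1}--\eqref{eq:norm inequality-l2} and Assumption \ref{norm-equi}, which together give control of the full $H^1(\Omega_h^n)$ norm.

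\textbf{State error.} Subtracting \eqref{eq:hat u} from \eqref{discrete_u} with test function $\chi_u=e_u^n\in V(k,\Ct_h^n)$ gives
\[
\mathscr{A}_h^n(e_u^n,e_u^n)=-\int_{\Omega_\eta^n} d_u^n e_u^n\,\D x .
\]
Coercivity then yields the first bound directly.

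\textbf{Adjoint error.} Subtracting \eqref{eq:hat p} from \eqref{discrete_p} with $\chi_p=e_p^n$ gives
\[
\mathscr{A}_h^n(e_p^n,e_p^n)=\int_{\Omega_\eta^n}\big(j_u'(\cdot,u_h^n)-j_u'(\cdot,\hat u^n)\big)e_p^n\,\D x-\int_{\Omega_\eta^n} d_p^n e_p^n\,\D x .
\]
For $j=\frac12|u-u_d|^2$ we have $j_u'(\cdot,u)=u-u_d$, so the difference of the $j_u'$ terms is exactly $e_u^n$. Cauchy--Schwarz on $\Omega_\eta^n$ then gives the second bound. For a general $j$, one would instead use a Lipschitz bound on $j_u'$ in $u$.

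\textbf{Velocity error.} Subtracting \eqref{eq:hat w} from \eqref{discrete_w} with $\chibf_w=\Be_w^n$ gives
\[
\mathscr{A}_h^n(\Be_w^n,\Be_w^n)=\D J(\Gamma_\eta^n,\hat u^n,\hat p^n;\Be_w^n)-\D J(\Gamma_\eta^n,u_h^n,p_h^n;\Be_w^n)-\int_{\Omega_\eta^n}\Bd_w^n\cdot\Be_w^n\,\D x .
\]
The main work is bounding the difference of the two Eulerian derivatives. I would expand it term by term using the closed form \eqref{distributed_derivative}, via the algebraic identity displayed above: write each bilinear product $ab-\hat a\hat b$ as $a(b-\hat b)+(a-\hat a)\hat b$. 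Every resulting term then has one error factor, $e_u^n$, $\nabla e_u^n$, $e_p^n$ or $\nabla e_p^n$, multiplied by a coefficient. The coefficients are $\nabla u_h^n$, $u_h^n$, $\hat u^n$, $\nabla\hat p^n$, $\hat p^n$, $f$, $\nabla f$, $u_d$ and $\nabla u_d$, and each multiplies $\bbD(\Be_w^n)$, $\nabla\cdot\Be_w^n$ or $\Be_w^n$.

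The key step, and the only nonroutine one, is bounding these coefficients in $L^\infty(\Omega_\eta^n)$ uniformly in $h$ and $n$.
\begin{itemize}
\item For $\hat u^n$ and $\hat p^n$ this follows from interpolation stability and the $C^k$ regularity of $u$ and $p$.
\item For $f$, $u_d$ and their gradients it follows from their assumed smoothness.
\item For $u_h^n$ and $\nabla u_h^n$ it requires the a priori bound \eqref{eq:bd of uh}. That bound rests on the induction hypothesis \eqref{assum}, which is available only for $n\le m-1$; this is why the velocity estimate is restricted to that range.
\end{itemize}

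With these bounds, H\"older's inequality gives
\[
\big|\D J(\Gamma_\eta^n,\hat u^n,\hat p^n;\Be_w^n)-\D J(\Gamma_\eta^n,u_h^n,p_h^n;\Be_w^n)\big|\le C\big(\N{e_u^n}_{H^1(\Omega_\eta^n)}+\N{e_p^n}_{H^1(\Omega_\eta^n)}\big)\N{\Be_w^n}_{\BH^1(\Omega_\eta^n)} .
\]
Combining this with coercivity gives the stated velocity bound. Throughout, the constants depend only on the coercivity constant and the norms of the exact solution, and hence are independent of $\tau$, $h$ and $n$.
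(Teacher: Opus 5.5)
Your proposal is correct and follows essentially the same route as the paper: subtract the interpolation equations from the discrete ones and test each with its own error. Then use the coercivity from \eqref{eq:norm inequality-h1}--\eqref{eq:norm inequality-l2}, and expand the difference of the Eulerian derivatives term by term, controlling $u_h^n$ and $\nabla u_h^n$ in $L^\infty$ via \eqref{eq:bd of uh}, which, as you note, is exactly why the velocity estimate is restricted to $n\le m-1$.
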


\subsection{Proof of Theorem \ref{Thm:4.1}}
Recall that the spatial error is defined as $\text{E}_h = h^k + h^{k-\frac 12}\eta^{\frac 12} + h^{-\frac 12}\eta$. By combining the results of Lemmas~\ref{lem:consis est} and~\ref{lem:sta est}, we derive the following estimates:
\begin{subequations} \nonumber
\begin{align}
    \N{e_{u}^n}^2_{H^1(\Omega_h^n)} &
    \leq C\left(\text{E}_h+ \N{\Be_\phi^n}_{H^1(\Omega^0)}\right) \N{e_{u}^n}_{H^1(\Omega_h^n)}, \\
    \N{e_{p}^n}^2_{H^1(\Omega_h^n)} &
     \leq C\left(\text{E}_h  + \N{\Be_\phi^n}_{H^1(\Omega^0)}\right) \N{e_{p}^n}_{H^1(\Omega_h^n)},\\
    \N{\Be_w^n}^2_{\BH^1(\Omega_h^n)} &
    \leq C\left(\text{E}_h + \N{\Be_\phi^n}_{H^1(\Omega^0)}\right) \N{\Be_w^n}_{\BH^1(\Omega_h^n)}.
\end{align}
\end{subequations}
The last inequality immediately implies 
\begin{equation}\nonumber
    \N{\Be_w^n}_{H^1(\Omega_h^n)} \leq C\left(\text{E}_h + \N{\Be_\phi^n}_{H^1(\Omega^0)} \right)
\end{equation}
for $0 \leq n \leq m-1$. 
Recalling the estimate for $\Be_\phi^n$ from Lemma~\ref{lem: ephi}, we obtain
\begin{equation}\nonumber
\begin{aligned}
    \N{\Be_\phi^n}_{H^1(\Omega^0)} &\leq  C\tau+ C\text{E}_h+ C\tau \sum_{l=1}^n \N{\Be_{\phi}^{l-1}}_{H^1(\Omega^0)}.
\end{aligned}
\end{equation}
Applying a discrete Gronwall's inequality yields
\begin{equation}\nonumber
\begin{aligned}
    \N{\Be_\phi^n}_{H^1(\Omega^0)} &\leq  C\left(\tau + \text{E}_h \right)
\end{aligned}
\end{equation}
 for $n=m$. Substituting this into the previous estimates gives the following
\begin{equation*}
    \begin{aligned}
      \N{e_u^n}_{H^1(\Omega_h^n)} \leq & C (\tau+\text{E}_h), \quad 
      \N{e_p^n}_{H^1(\Omega_h^n)} \leq  C (\tau+\text{E}_h),\quad 
      \N{\Be_w^n}_{\BH^1(\Omega_h^n)} \leq & C (\tau+\text{E}_h)
    \end{aligned}
\end{equation*}
for $0 \leq n \leq m$.
Since $k \geq 2$, there exists $h_0>0$, we may choose $\tau = o(h)$ and $\eta = o(h^{\frac{3}{2}})$ with $0<h\leq h_0$,  such that 
\begin{equation}\nonumber
\begin{aligned}
    &\N{\Be_\phi^n}_{W^{1,\infty}(\Omega^0)} \leq Ch^{-1} \N{\Be_\phi^n}_{H^1(\Omega^0)} \leq Ch^{-1}(\tau + \text{E}_h ) \leq \min \left\{1, \frac{1}{2C_\phi}\right\}, \\
    &\N{e_{u} ^n}_{W^{1,\infty}(\Omega_h^n)}  \leq Ch^{-1} \N{e_{u}^n}_{H^1(\Omega_h^n)} \leq Ch^{-1}(\tau + \text{E}_h ) \leq 1, \\
     &\N{e_{p} ^n}_{W^{1,\infty}(\Omega_h^n)}  \leq Ch^{-1} \N{e_{p}^n}_{H^1(\Omega_h^n)} \leq Ch^{-1}(\tau + \text{E}_h ) \leq 1,\\
    &\N{\Be_w^n}_{\BW^{1,\infty}(\Omega_h^n)}  \leq Ch^{-1} \N{\Be_w^n}_{\BH^1(\Omega_h^n)} \leq Ch^{-1}(\tau + \text{E}_h) \leq 1,
\end{aligned}
\end{equation}
which completes the proof.

\section{Numerical Experiments}\setcounter{equation}{0}
In this section, we report some numerical experiments to validate the theoretical results established in this work. The unfitted finite element method is implemented in MATLAB on a regular mesh. 
In all computations, the parameter in 
the ghost-penalty stabilization term \eqref{eq:ghost penalty} is set to $\alpha =1$.
The corresponding shape optimization algorithm is described below.
\begin{Algorithm}\label{Alg:5.1}\textbf{Shape steepest descent algorithm}
\begin{enumerate}
\item \textbf{Require}: {$f$, $u_d$, $\phi_{0}$}   
  \item  {$t \gets 0$}
  \item  \textbf{repeat}
  \item  \quad
solve the state and adjoint equations (\ref{discrete_u}) and (\ref{discrete_p}). 
\item  \quad
compute the descent direction by solving the Hilbertian regularization equation (\ref{discrete_w}).
\item \quad
 update the control points via \eqref{control_point} to obtain the corresponding cubic spline.
\item \quad $t \gets t + \tau$
\item  
\textbf{until} {$t = T$} 
\end{enumerate}
\end{Algorithm}

We demonstrate the convergence of the proposed method and its robustness in simulating the evolution of deformed boundaries. In numerical experiments, we set the computational domain $D =(0,3)^2$ centered at $\Bc = (1.5, 1.5)$. 

\begin{Example} \label{example1}
    The source function and shape density are set as
    \begin{align*}
        f(\Bx)&= ((x_1-c_1)^2+(x_2-c_2)^2)^2-18((x_1-c_1)^2+(x_2-c_2)^2)+9,\\
        j(\cdot,u)&= \frac{1}{2}|u-u_d|^2\quad \text{with} \;u_d = (1-(x_1-c_1)^2-(x_2-c_2)^2)^2.
    \end{align*}
    Then the optimal domain is the unit circle $\{(x_1, x_2):\ (x_1-c_1)^2+(x_2-c_2)^2 < 1 \}$. The initial domain is chosen as an ellipse with semi-major axis 1.3 and semi-minor axis 0.85. 
\end{Example}

According to the assumptions in Theorem \ref{Thm:4.1}, we choose $\tau = \mathcal{O}(h^{k})$ and $\eta = \mathcal{O}(h^{k+\frac{1}{2}})$. In numerical experiments, the final time is set to $T = 40$. Figure \ref{ex1} illustrates the evolution of the domain at $t=0$ and $t=40$.  To evaluate the convergence rates, we employ a sequence of mesh sizes and time steps defined by
\begin{equation}\nonumber
h_i = \frac{3}{2^{3+i}}, \quad \tau_i = \frac{T}{625 \times 4^{i}}, \quad i = 0, 1, 2, 3, 4.
\end{equation}
The term $\Be_\phi^n$ is evaluated using the geometric error defined as follows:
\begin{equation} \nonumber
    \sum_{K \in \Ct_h} \Big|\mbox{area}(\Omega(T) \cap K) - \mbox{area}(\Omega_\eta^N \cap K) \Big|.
\end{equation}
Here, $\Omega(T)$ denotes the optimal domain and $\Omega_\eta^N$ represents the cubic spline reconstruction of the discrete solution. Using the numerical solution at $h = \frac{3}{128}$ as a reference, the convergence results for $k = 2$ are presented in Table \ref{t1}. 

\begin{figure}[!htbp]
  \centering
  \subfigure[initial domain at \ $t = 0$]{
  \includegraphics[width=2.5in]{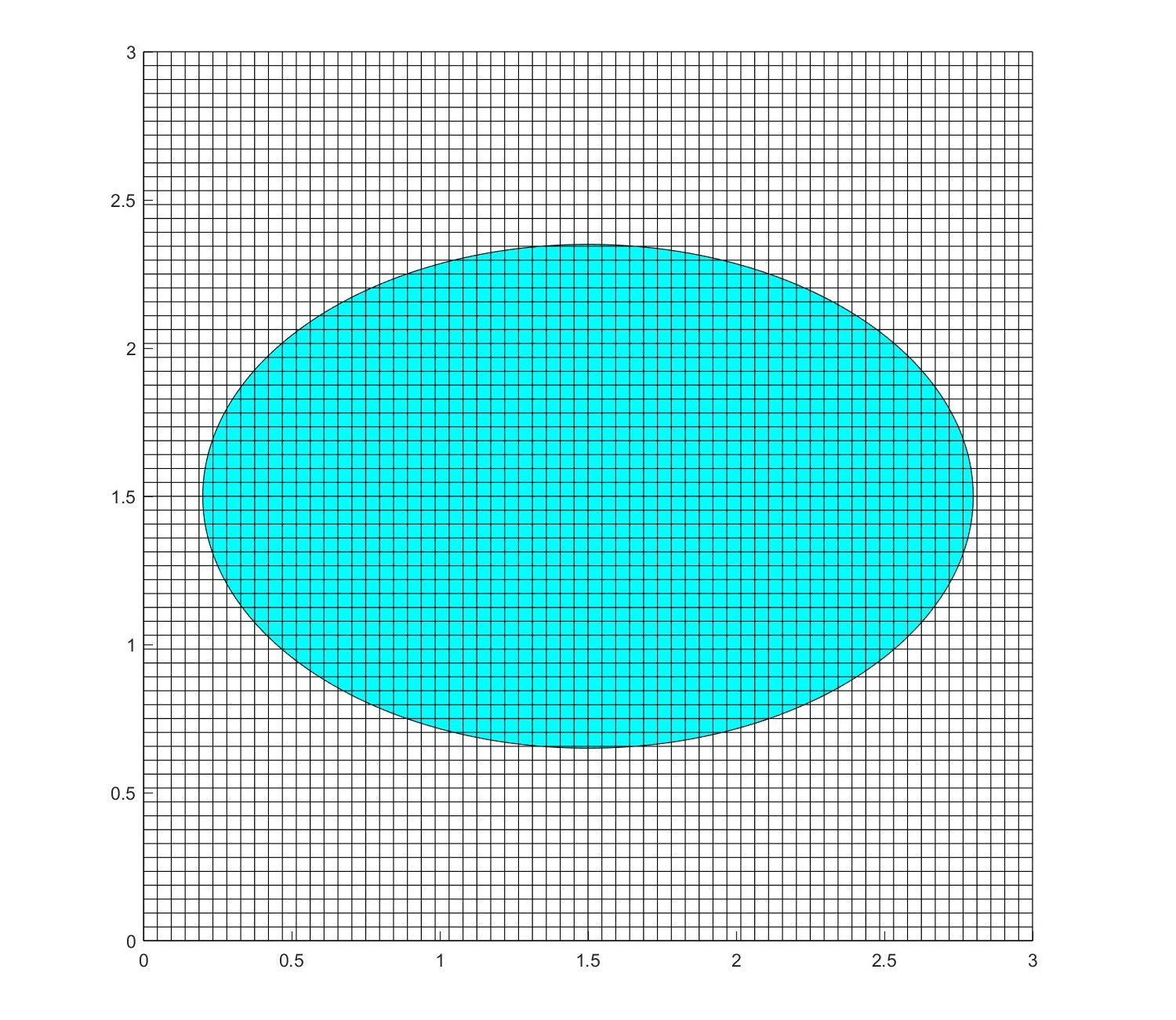}}\hspace{10mm}
  \subfigure[optimal domain at \ $t = 40$]{
  \includegraphics[width=2.5in]{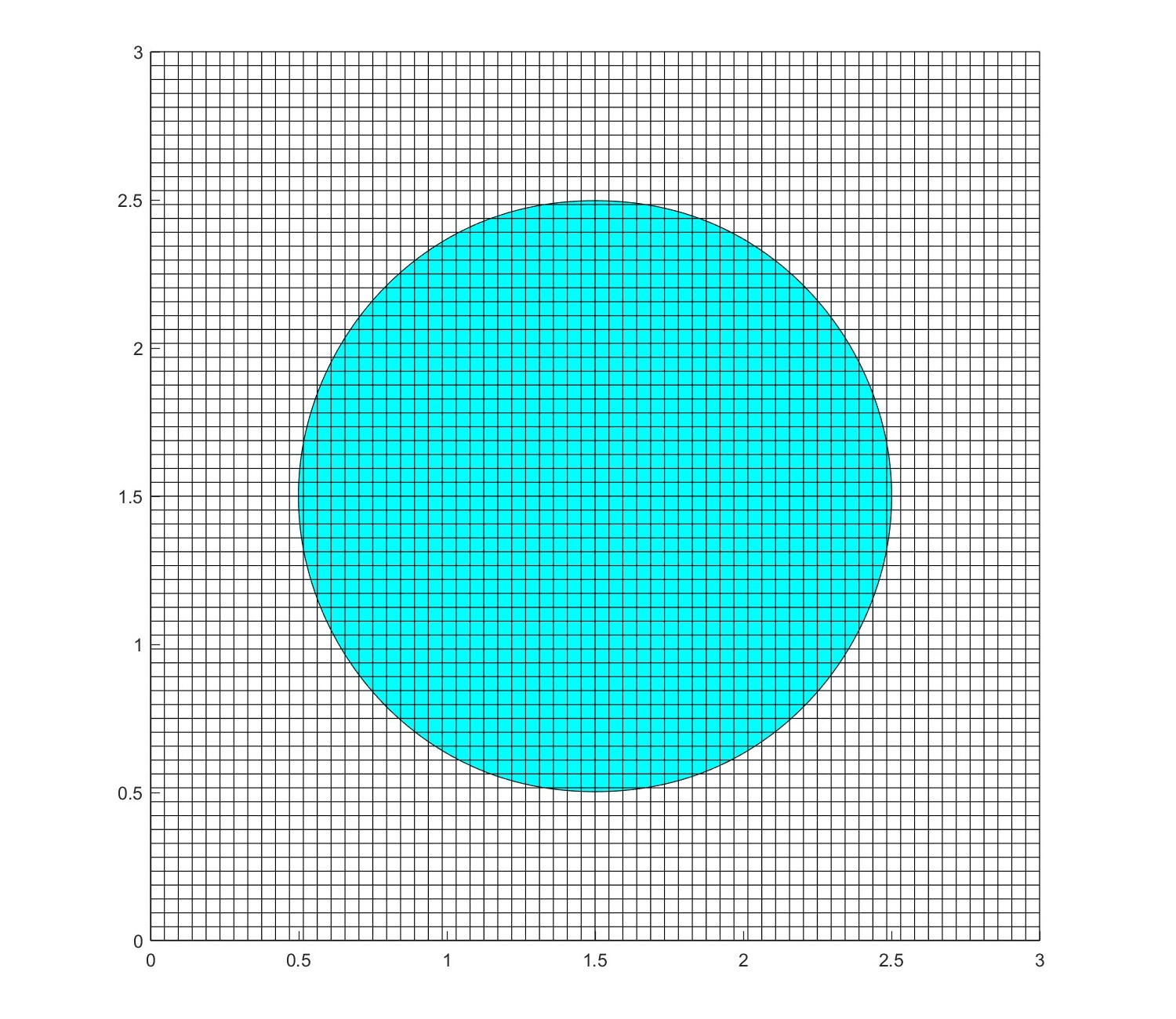}}
  \caption{Evolution of the domain for Example \ref{example1}.}
  \label{ex1}
\end{figure}

\begin{table}[h]
\centering
\renewcommand{\arraystretch}{1.3}
\setlength{\tabcolsep}{15pt}
\begin{tabular}{c c c c c}
\toprule
{$h$} & {order of $\phi$} & {order of $u$} & {order of $p$} & {order of $\Bw$} \\
\midrule
1/8  & - & - & - & - \\
1/16 & 3.9905 & 2.0207 & 3.7859  & 3.8351  \\
1/32 & 0.9245 & 2.3419 & 3.0753 & 2.3673 \\
1/64 & 1.7174 & 2.2618  & 1.5532 & 1.6211 \\
\bottomrule
\end{tabular}
\vspace{5pt}
\caption{Convergence orders of different variables for Example \ref{example1}.}
\label{t1}
\end{table}

\begin{Example}\label{example2}
    In this example the boundary of the initial domain is described by the following parametric equations for \(\theta \in [0, 2\pi]\)
    \[
    x_1(\theta) = c_1 + 1.2 \left( 2 \Big/ \bigg(8 + 6\sin\big(3\theta + \dfrac{3\pi}{36}\big)\bigg) \right)^{\!\!1/6} \cos \theta; \quad 
    x_2(\theta) = c_2 + 1.2 \left( 2 \Big/ \bigg(8 + 6\sin\big(3\theta + \dfrac{3\pi}{36}\big)\bigg) \right)^{\!\!1/6} \sin \theta.
    \]
All other settings remain the same as in Example \ref{example1}.
\end{Example}

\begin{figure}[!htbp]
  \centering
  \subfigure[initial domain at \ $T = 0$]{
  \includegraphics[width=2.5in]{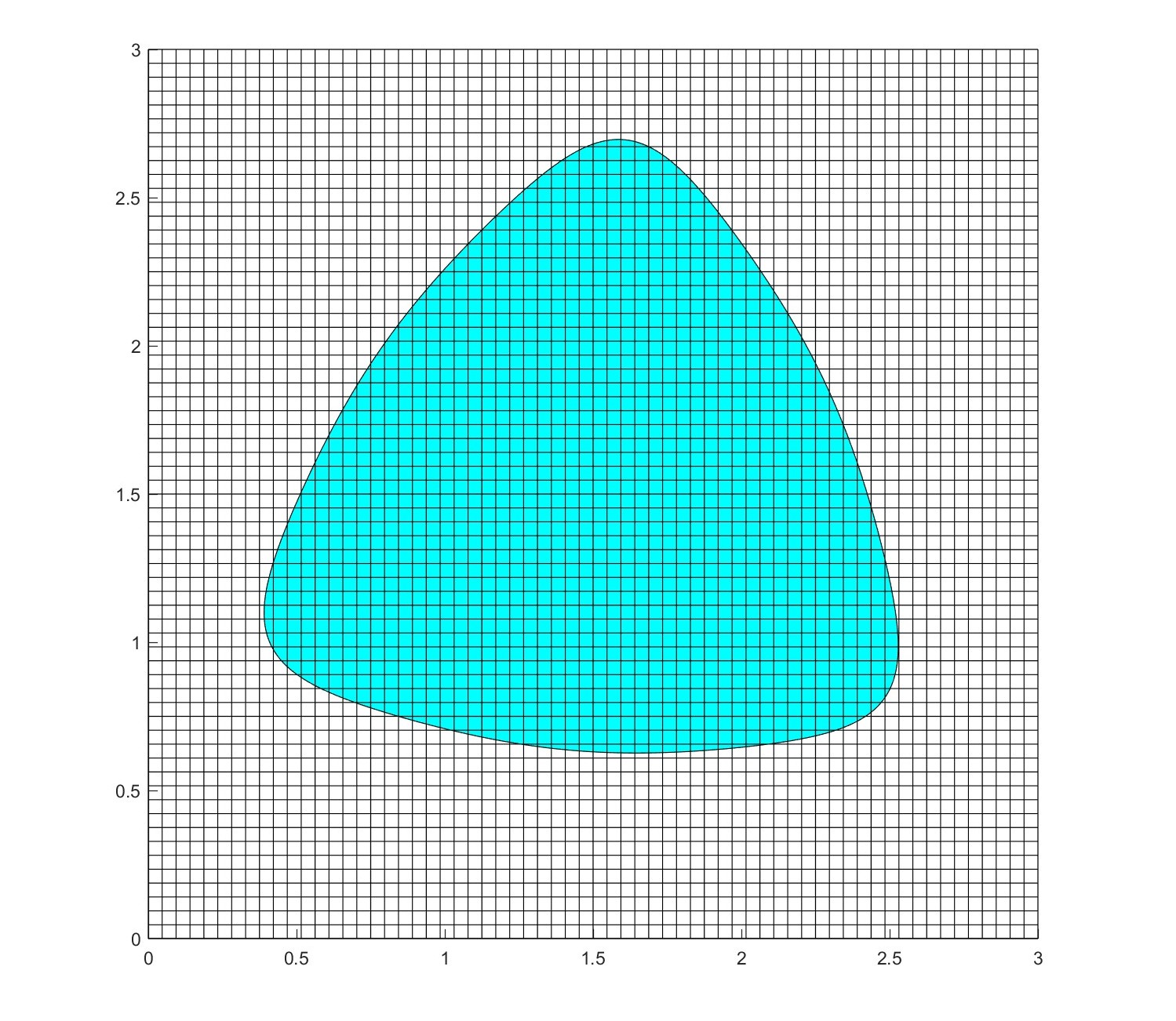}}\hspace{10mm}
  \subfigure[optimal domain at \ $T = 40$]{
  \includegraphics[width=2.5in]{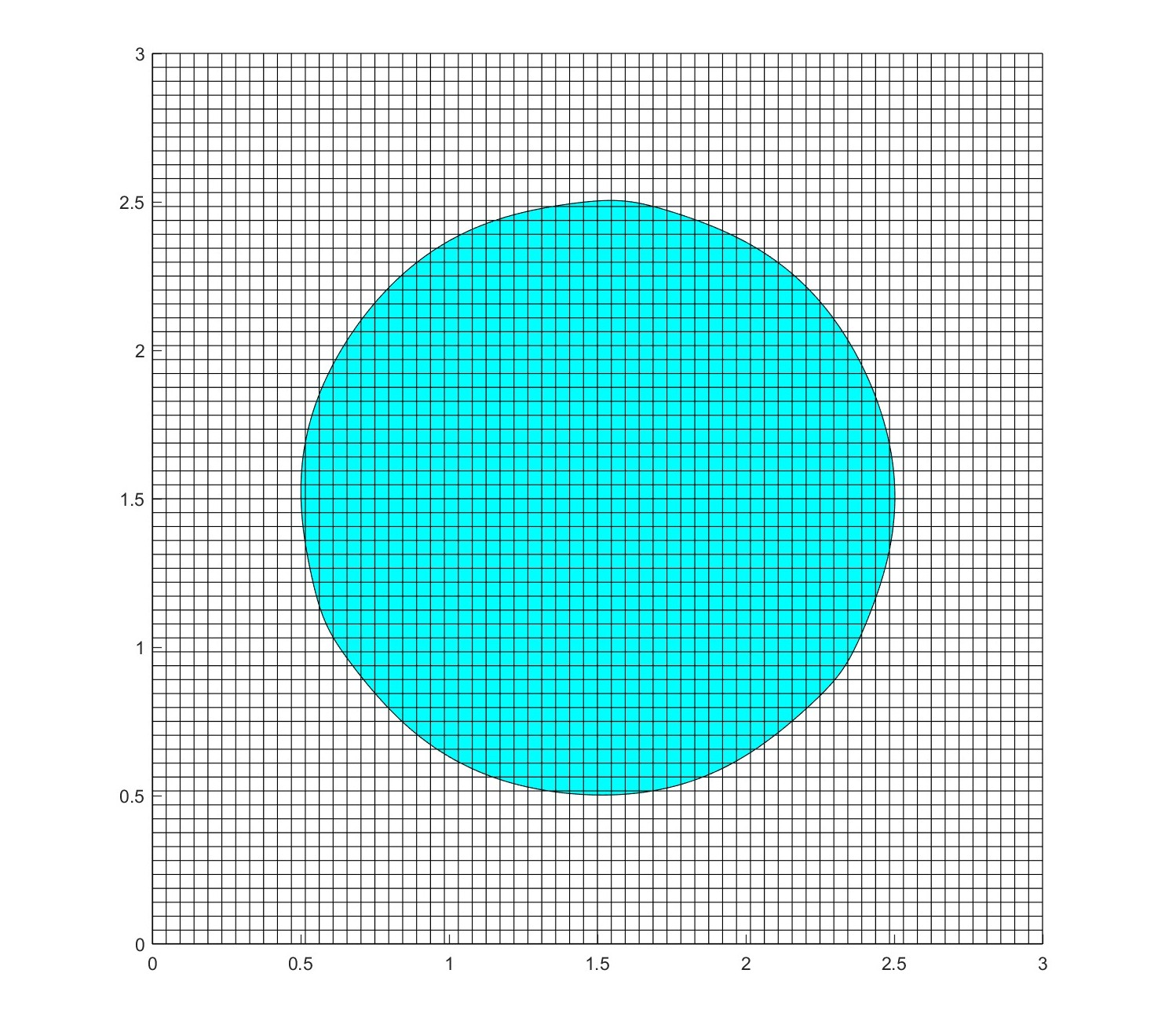}}
  \caption{Evolution of the domain for Example \ref{example2}.}
  \label{ex2}
\end{figure}

\begin{table}[h]
\centering
\renewcommand{\arraystretch}{1.3}
\setlength{\tabcolsep}{15pt}
\begin{tabular}{c c c c c}
\toprule
{$h$} & {order of $\phi$} & {order of $u$} & {order of $p$} & {order of $\Bw$} \\
\midrule
1/8  & - & - & - & -  \\
1/16 & 2.0425 & 2.0045 & 3.6323 & 3.3478 \\
1/32 & 1.7642 & 2.2168 & 2.1056 & 1.9336 \\
1/64 & 1.8626 & 2.0312 & 1.9120 & 1.9990 \\
\bottomrule
\end{tabular}
\vspace{5pt}
\caption{Convergence orders of different variables for Example \ref{example2}.}
\label{t2}
\end{table}

Figure \ref{ex2} shows the evolution of the domain, while the specific convergence orders are detailed in Table \ref{t2}. We observe that although the convergence order may deviate from the theoretical result in coarser meshes, it approaches the expected rate of 2 as the mesh size $h$ decreases. Furthermore, the results in Example \ref{example2} surpass those of Example \ref{example1}. Although adopting a smaller time step might improve accuracy, the constraint $\tau = \mathcal{O}(h^{2})$ would drastically increase the computational cost as $h$ vanishes. Consequently, we maintain the current settings for our simulations. However, for highly deformed shapes, performance may degrade because the current analysis is confined to short-time behavior. To enhance accuracy, adaptive refinement, such as dynamic insertion and removal of control points, may be required during simulation.

\section{Conclusion}\setcounter{equation}{0}
In this paper, we proposed an unfitted finite element framework for PDE-constrained shape optimization based on the $H^1$ shape gradient flow. The method combines three key ingredients: a shape gradient flow formulation with the state, adjoint, velocity, and flow map equations; a cubic spline representation for boundary evolution; and a CutFEM discretization with ghost penalty stabilization on unfitted meshes. This approach avoids the remeshing procedure during optimization while preserving a clear variational structure for both theoretical analysis and numerical implementation.

From a theoretical point of view, we established finite error estimates for the flow map, state, adjoint, and velocity variables under suitable regularity assumptions. In particular, for $k\ge 2$, $\tau=\mathcal{O}(h^k)$, and $\eta=\mathcal{O}(h^{k+\frac{1}{2}})$, we proved the bound
\begin{equation}\nonumber
        \N{\Be_\phi^n}_{H^1(\Omega^0)} + \N{e_u^n}_{H^1(\Omega_h^n)} +\N{e_p^n}_{H^1(\Omega_h^n)} +\N{\Be_w^n}_{H^1(\Omega_h^n)} \leq  C(\tau + h^k ),
    \end{equation}
which quantifies the coupling effects of the temporal, spatial, and geometric discretization errors.

Numerical experiments confirmed the theoretical findings: the method is robust for different initial shapes, and the observed convergence rates approach the expected order behavior as the mesh is refined. The results also indicate that coarse meshes may exhibit pre-asymptotic effects, as the relationship between the error and mesh size is inherently nonlinear. Since the current analysis may not extend naturally to long term simulations or large deformation cases, future work will focus on adaptive strategies for control point redistribution and mesh refinement. Additionally, we plan to extend this framework to more complex PDE constraints and three dimensional shape optimization problems.

 \medskip

\end{document}